\renewcommand\@seccntformat[1]{\csname the#1\endcsname.\quad}
\newtheorem{theorem}{Theorem}[section]
\newtheorem{corollary}[theorem]{Corollary}
\newtheorem{definition}[theorem]{Definition}
\newtheorem{example}[theorem]{Example}
\newtheorem{lemma}[theorem]{Lemma}
\newtheorem{proposition}[theorem]{Proposition}
\newtheorem{remark}[theorem]{Remark}
\newcommand{\norm}[3]{\ensuremath{\left\Vert#1\right\Vert_{#2}^{#3}}}
\newcommand{\abs}[3]{\ensuremath{\left\vert#1\right\vert_{#2}^{#3}}}
\DeclareMathOperator{\supp}{supp}
\begin{document}

\author[M. Ciesielski \& G. Lewicki]{Maciej Ciesielski$^{1*}$ and Grzegorz Lewicki}

\title[On a certain class of norms in semimodular spaces]{On a certain class of norms in semimodular spaces and their monotonicity properties}

\begin{abstract}
 Let $ X$ be a linear space over $\mathbb{K},$ $ \mathbb{K} = \mathbb{R}$ or   $ \mathbb{K} = \mathbb{C}$ and let for $ n \geq 2$ $\rho_i$ be $s$-convex semimodular defined on $X$ for any $i\in\{1,\dots,n-1\}$. Put $ \rho=\max_{1\leq{i}\leq{n-1}}\{\rho_i\}$ and 
 $ X_{\rho}= \{ x \in X: \rho(dx) < \infty \hbox{ for some } d > 0 \}.$ In this paper we define a new class of $s$-norms (norms if $s=1$) on $ X_{\rho}.$ In particular, our defintion generalizes in a natural way the Orlicz-Amemiya and Luxemburg norms defined for $s$-convex semimodulars. Then, we investigate order continuous, the Fatou Property and various monotonicity properties of semimodular spaces equipped with these $s$-norms. 
\end{abstract}

\maketitle

\bigskip\ 

{\small \underline{2000 Mathematics Subjects Classification: 41A65, 46E30, 46A40 }\hspace{1.5cm}\
\ \ \quad\ \quad . }\smallskip\ 

{\small \underline{Key Words and Phrases:}\hspace{0.15in} Modular spaces, Orlicz spaces, Ces\`{a}ro-Orlicz spaces, uniform monotonicity, strict monotonicity, order continuity.}

\bigskip\ \ 

\section{Introduction}

In 1983, J. Musielak \cite{Mus} published a significant book devoted to semimodular spaces and Orlicz spaces. The book contains basic facts about semimodular and Orlicz spaces equipped with the Amemiya-Orlicz norm and Luxemburg norm. Recently, many authors have investigated intensively geometric properties of the Orlicz spaces  $L^\phi$, Musielak-Orlicz spaces $L^\phi$ and Lorentz spaces $\Lambda_{\phi,w}$ for a Orlicz function $\phi$  and a weight  function $w$ (see e.g. \cite{DoHuLoMaSi,Hudz,hk1,KraRut}). It is worth mentioning that many papers were dedicated especially to rotundity and monotonicity properties which have a deep application to the approximation theory (see e.g. \cite{CheHeHudz,CuHuWi,Hu-Ku}). Also, in the last years the above-mentioned properties were researched with respect to so-called $p$-Amemiya norm (see e.g. \cite{CuDuHuWi,CuHuLiWi,CuHuWi,CuHuWiWl}).
The aim of this paper is to introduce a large class of $s$-norms (norms if $s=1$) which are a natural generalization of the above-recalled norms (see Theorem \ref{increasing}). In the spirit of the previous investigation we study
monotonicity properties, order continuity and auxiliary facts in semimodular spaces. It is worth noticing that we consider not only convex case but we focus on $s$-convex case for $s\in(0,1]$ (for more details see \cite{Mus}).  

The papers is organized as follows. \\ 
Section 2 contains the basic definition and necessary notation which we use in our investigation. \\
The crucial result of Section 3 is Theorem \ref{increasing}, in which we establish a new class of $s$-norms in semimodular spaces. We also research basic properties of these above-mentioned $s$-norms.\\
Section 4 is dedicated to a characterization of order continuity and the Fatou property in $s$-Banach function spaces equipped with the $s$-norms introduced in Theorem \ref{increasing}. Let us mention that the most essential problem that requires a completely new technique is a characterization of the Fatou Property of $s$-Banach function spaces (see Theorem \ref{thm:Fatou}).\\ 
Varies monotonicity properties of $s$-norm introduced in Theorem \ref{increasing} are studied in Section 5. It is worth recalling that the crucial issue of this section is Proposition \ref{prop:modul:mono} which allows us to prove uniform monotonicity (see Theorem \ref{thm:UM}) and strict monotonicity (see Theorem \ref{thm:f:norm:SM}) in $s$-Banach function spaces. At the end of this section we present a complete criteria for monotonicity properties for a certain class of semimodular spaces with respect to above-recalled $s$-norms.

In particular, the results presented in this paper generalized earlier results proved in the case of the Luxemburg and Amemiya-Orlicz norms to a large class of semimodular spaces equipped with $s$-norms. It is necessary to notice that monotonicity properties in the case of $s$-convex semimodulars for $s\in(0,1)$ are very rarely studied in the literature. 
      
\section{Preliminaries}

Let $\mathbb{C}$, $\mathbb{R}$, $\mathbb{R}^+$ and $\mathbb{N}$ be the sets of complex, reals, nonnegative reals and positive integers, respectively. Let us denote by $(e_i)_{i=1}^n$ a standard basis in $\mathbb{R}^n$.
Let $ X$ be a linear space over $\mathbb{K}$, where $ \mathbb{K} = \mathbb{R}$ or   $ \mathbb{K} = \mathbb{C}.$ A function $ f : X \rightarrow [0, + \infty]$ is called \textit{convex} if
$$f(ax+by) \leq a f(x) + b f(y)$$
for any $ x,y \in X$ and $ a,b \geq 0,$ $a +b =1.$ Let $s\in(0,1].$ A function $ \rho: X \rightarrow [0, + \infty]$ is called an \textit{$s$-convex semimodular} if there holds for arbitrary $ x, y \in X:$
\begin{itemize}
	\item[$(a)$] $\rho(dx) = 0$ for any $ d \geq 0$ implies $x=0$ and $\rho(0)=0;$
	\item[$(b)$] $\rho(dx) = \rho(x),$ for any $ d \in \mathbb{K}, $ $ |d|=1;$
	\item[$(c)$] $\rho(ax+by) \leq a^s \rho(x) + b^s \rho(y)$
	for any $ a,b \geq 0,$ $a^s +b^s =1.$
\end{itemize} 
Put
	$$
	X_{\rho}= \{ x \in X: \rho(dx) < \infty \hbox{ for some } d > 0 \}
	$$
	and 
	$$
	E_{\rho}= \{ x \in X: \rho(dx) < \infty \hbox{ for all } d > 0 \}.
	$$
Then $X_{\rho}$ is called a \textit{semimodular space}. If $ s=1$ $\rho$ is called a convex semimodular. A semimodular $\rho$ on $X_\rho$ is said to be \textit{left-continuous} (resp. \textit{right-continuous}) if for any $x\in{X_\rho}$  and $\lambda_0\in(0,\infty)$ we have 
\begin{equation*}
\lim_{\lambda\rightarrow\lambda_0^{-}}\rho(\lambda{x})=\rho(\lambda_0{x}),\quad\textnormal{and}\quad\left(\textnormal{resp.}\quad\lim_{\lambda\rightarrow\lambda_0^{+}}\rho(\lambda{x})=\rho(\lambda_0{x})\right).
\end{equation*}
We say that a semimodular $\rho$ on $X_\rho$ is \textit{continuous} if it is left- and right-continuous.

A functional $\norm{\cdot}{}{}:X\rightarrow[0,\infty)$ is called $s$-norm for $s\in(0,1]$ if the following conditions are satisfied:
\begin{itemize}
	\item[$(a')$] $\norm{x}{}{}=0$ if and only if $x=0$;
	\item[$(b')$] $\norm{\lambda{x}}{}{}=|\lambda|^s\norm{x}{}{}$ for any $x\in{X}$ and $\lambda\in\mathbb{K}$;
	\item[$(c')$] $\norm{x+y}{}{}\leq\norm{x}{}{}+\norm{y}{}{}$ for all $x,y\in{X}$.
\end{itemize}
We denote by $\mu$ the Lebesgue measure on $I=[0,\alpha)$, where $\alpha =1$ or $\alpha =\infty$, and by $L^{0}$ the set of all (equivalence classes of) extended real valued Lebesgue measurable functions on $I$.
Define by $S_X$ (resp. $B_X)$ the unit sphere (resp. the closed unit ball) in a Banach space $(X,\norm{\cdot}{X}{})$. A Banach lattice $(E,\Vert \cdot \Vert _{E})$ equipped with $s$-norm $\norm{\cdot}{E}{}$, where $s\in(0,1]$, is called an \textit{$s$-Banach function space} (or an \textit{$s$-K\"othe space}) if it is a sublattice of $L^{0}
$ and the following conditions are satisfied
\begin{itemize}
	\item[(1)] If $x\in L^0$, $y\in E$ and $|x|\leq|y|$ a.e., then $x\in E$ and $%
	\|x\|_E\leq\|y\|_E$.
	\item[(2)] There exists a strictly positive $x\in E$.
\end{itemize}
In case when $s=1$, then the space $E$ is called a Banach function space. We use the short notation $E^{+}={\{x \in E:x \ge 0\}}$. We say that $\rho$ a semimodular on a sublattice $X_\rho$ of $L^0$ is \textit{superadditive} if for any $x,y\in{X^+}$ we have 
\begin{equation*}
\rho(x+y)\geq\rho(x)+\rho(y).
\end{equation*}
An element $x\in E$ is said to be a \textit{point of order continuity} (shortly $x\in{E_a}$) if for any
sequence $(x_{n})\subset{}E^+$ such that $x_{n}\leq \left\vert x\right\vert 
$ and $x_{n}\rightarrow 0$ a.e. we have $\left\Vert x_{n}\right\Vert
_{E}\rightarrow 0.$ An $s$-Banach function space $E$ is called \textit{order continuous} (shortly $E\in \left( OC\right) $) if any element $x\in{}E$ is a point of order continuity. An $s$-Banach function space $E$ is said to have the \textit{Fatou property} if for any $\left( x_{n}\right)\subset{}E^+$, $\sup_{n\in \mathbb{N}}\Vert x_{n}\Vert
_{E}<\infty$ and $x_{n}\uparrow x\in L^{0}$, then $x\in E$ and $\Vert x_{n}\Vert _{E}\uparrow\Vert x\Vert_{E}$. 

A point $x\in{E^+}$ is called a \textit{point of upper monotonicity} (resp. \textit{point of lower monotonicity}) for short a $UM$ \textit{point} (resp. an $LM$ \textit{point}) of $E$ whenever for each $y\in{E^+}$, $x\neq{y}$ with $x\leq{y}$ (resp. $x\neq{y}$ with $y\leq{x}$), we have $\norm{x}{E}{}<\norm{y}{E}{}$ (resp. $\norm{y}{E}{}<\norm{x}{E}{}$). A space $E$ is called \textit{strictly monotone} (shortly $E\in(SM)$) if any element of $E^+$ is a $UM$ point or equivalently if any element of $E^+$ is an $LM$ point.

An element $x\in E^{+}$ is said to be a \textit{point of upper local uniform monotonicity} (resp. a \textit{point of lower local uniform monotonicity}), shortly a $ULUM$ \textit{point} (resp. an $LLUM$ \textit{point}), if for any $(x_{n})\subset E^+$ such that $x\leq x_{n}$ and $\left\Vert{}x_{n}\right\Vert _{E}\rightarrow \left\Vert x\right\Vert _{E}$ (resp. $x_n\leq{x}$ and $\norm{x_n}{E}{}\rightarrow\norm{x}{E}{}$), we get $\left\Vert x_{n}-x\right\Vert _{E}\rightarrow 0$. Let us recall that if each
point of $E^{+}\setminus \left\{ 0\right\} $ is a $ULUM$ point (resp. an $LLUM$ point), then we say that $E$ is \textit{upper locally uniformly monotone}, shortly $E\in \left(ULUM\right)$, (resp. \textit{lower locally uniformly monotone}, shortly $E\in \left(LLUM\right)$).

{An $s$-Banach function space} $E$ is called \textit{uniformly monotone}, shortly $E\in(UM)$, if for any $\epsilon\in(0,1]$ there exists $\delta_\epsilon\in(0,1]$ such that for any $x,y\in{E}^+$ with $x\leq{y}$, $\norm{y}{E}{}=1$ and $\norm{x}{E}{}\geq\epsilon$ we have $\norm{y-x}{E}{}<1-\delta_\epsilon$. 
A space $E$ is called \textit{decreasing uniformly monotone}, shortly $E\in(DUM)$, (resp. \textit{increasing uniformly monotone}, shortly $E\in(IUM)$) if for any $(x_n),(y_n)\subset{E^+}$ such that $x_{n+1}\leq{}x_n\leq{y_n}$ for any $n\in\mathbb{N}$ and $\lim_{n\rightarrow\infty}\norm{x_n}{E}{}=\lim_{n\rightarrow\infty}\norm{y_n}{E}{}<\infty$ (resp. $x_n\leq{y_n}\leq{y_{n+1}}$ for any $n\in\mathbb{N}$ and $\lim_{n\rightarrow\infty}\norm{x_n}{E}{}=\lim_{n\rightarrow\infty}\norm{y_n}{E}{}<\infty$) we have $\norm{x_n-y_n}{E}{}\rightarrow{0}$. For more information see \cite{Hu-Ku}.

The \textit{distribution function} for any function $x\in L^{0}$ is defined by 
\begin{equation*}
d_{x}(\lambda) =\mu\left\{ s\in [ 0,\alpha) :\left\vert x\left(s\right) \right\vert >\lambda \right\},\qquad\lambda \geq 0.
\end{equation*}
For any function $x\in L^{0}$ its \textit{decreasing rearrangement} is given by 
\begin{equation*}
x^{(r)}\left( t\right) =\inf \left\{ \lambda >0:d_{x}\left( \lambda
\right) \leq t\right\}, \text{ \ \ } t\geq 0.
\end{equation*}
In this article we use the notation $x^{(r)}(\infty)=\lim_{t\rightarrow\infty}x^{(r)}(t)$ if $\alpha=\infty$ and $x^{(r)}(\infty)=0$ if $\alpha=1$. For any function $x\in L^{0}$ we denote the \textit{maximal function} of $x^{(r)}$ by 
\begin{equation*}
x^{\ast \ast }(t)=\frac{1}{t}\int_{0}^{t}x^{(r)}(s)ds.
\end{equation*}
It is well known that for any point $x\in L^{0}$,  $x^{(r)}\leq x^{\ast \ast },$ $x^{\ast \ast }$ is decreasing, continuous and subadditive. For more information of $d_{x}$, $x^{(r)}$ and $x^{\ast \ast }$ see \cite{BS, KPS}. We say that two functions $x,y\in{L^0}$ are \textit{equimeasurable}, shortly $x\sim y$, if $d_x=d_y$. An $s$-Banach function space $(E,\Vert \cdot \Vert_{E}) $ is called \textit{$s$-symmetric} or \textit{$s$-rearrangement invariant} ($s$-r.i. for short) if whenever $x\in L^{0}$ and $y\in E$ such that $x \sim y,$ then $x\in E$ and $\Vert x\Vert_{E}=\Vert y\Vert_{E}$. Let us mention that if $s=1$ we say that $E$ is \textit{symmetric} or \textit{rearrangement invariant} (r.i. for short).

The function $\psi:\mathbb{R}\rightarrow[0,\infty]$ is called the \textit{Orlicz function} if $\psi$ is nonzero function that is even, convex, continuous and vanishes at zero, $\lim_{\abs{t}{}{}\rightarrow\infty}\psi(t)=\infty$. The mapping $\psi:\mathbb{R}\rightarrow[0,\infty]$ is said to be an \textit{$N$-function} (resp. \textit{$N$-function at zero}) if $\psi$ is even, convex, continuous (resp. even, convex, continuous) and
\begin{equation*}
\lim_{t\rightarrow 0}\frac{\psi(t)}{t}=0\quad\textnormal{and}\quad\lim_{t\rightarrow \infty}\frac{\psi(t)}{t}=\infty\qquad\left(\textnormal{resp.}\quad\lim_{t\rightarrow 0}\frac{\psi(t)}{t}=0\right).
\end{equation*}  
We employ the following parameters
\begin{equation*}
a_\psi=\sup\{t>0:\psi(t)=0\}\quad\textnormal{and}\quad{b_\psi=\sup\{t>0:\psi(t)<\infty\}}.
\end{equation*}
We say that an Orlicz function $\psi$ satisfies condition $\Delta_2$ for all $u\in\mathbb{R^+}$ (shortly $\psi\in\Delta_2$) if there exists $K>0$ such that for all $u\in\mathbb{R}$ we have $\psi(2u)\leq{K}\psi(u)$. It is worth mentioning that if $\psi\in\Delta_2$, then $a_\psi=0$. We define for any Orlicz function $\psi$ its complementary function $\psi_{_Y}$ on $\mathbb{R}$ in the sense of Young and a convex modular $\rho_\psi$ on $L^0$ by
\begin{equation*}
\psi_{_Y}(u)=\sup_{v>0}\{\abs{u}{}{}v-\psi(v)\}\qquad\textnormal{and}\qquad{}I_{\psi}(x)=\int_{I}\psi(x(t))dt
\end{equation*}
for any $u\in\mathbb{R}$ and for any $x\in{L^0}$, respectively. The Orlicz space $L^\psi$ is given by 
\begin{equation*}
L^\psi=\left\{x\in L^0:I_\psi(\lambda x)<\infty,\textnormal{ for some }\lambda>0 \right\}.
\end{equation*}
The Orlicz spaces $L^\psi$ might be considered as Banach spaces equipped with the Luxemburg norm 
\begin{equation*}
\norm{x}{\psi}{}=\inf\left\{\lambda>0:I_\psi\left(\frac{x}{\lambda}\right)\leq{1}\right\}
\end{equation*}
or with the equivalent Orlicz norm
\begin{equation*}
\norm{x}{\psi}{o}=\sup\left\{\abs{\int_{I}y(t)x(t)dt}{}{}:I_{\psi_{_Y}}(y)\leq{1}\right\}.
\end{equation*}
Let us mention that the Orlicz space $L^\psi$ is order continuous if and only if the Orlicz function $\psi$ satisfies condition $\Delta_2$. It is worth mentioning that the Orlicz spaces $L^\psi$ are r.i. Banach function spaces under both the Luxemburg and Orlicz norms (for more details the reader is referred to \cite{BS,KraRut,KPS,Mus}).
For any function $x\in L^{0}$ we denote the \textit{Ces\`{a}ro operator} of $x$ by 
\begin{equation}\label{Cesaro:oper}
C(x)(t)=\frac{1}{t}\int_{0}^{t}|x(s)|ds.
\end{equation}
Let $w\geq{0}$ be a locally integrable weight function. The weighted Ces\`{a}ro-Orlicz space $C_{\psi,w}$, generated by the Orlicz function $\psi$ and a weight $w$, is a subspace of $L^0$ satisfying
\begin{equation*}
C_{\psi,w}=\left\{x\in{L^0}:I_{\psi,w}(C(\lambda{x}))=\int_{I}\psi\left(C(\lambda{x})(t)\right)w(t)dt<\infty,\textnormal{ for some }\lambda>0\right\}.
\end{equation*}
Ces\`{a}ro function space were researched for the first time in \cite{Shiue}. It is well known that in case when $\psi$ is a power function and $w\geq{0}$ is a weight function, the weighted Ces\`{a}ro-Orlicz space $C_{\psi,w}$ is an order continuous Banach function space with the Fatou property (see \cite{KamKub}).

\section{Construction of various $s$-norms in semimodular space $X_\rho$}

\begin{lemma}
\label{lem1}
Let $ \rho$ be an $s$-convex semimodular. Then for any $ x \in X$ and and $ d \geq 1, $ $ \rho(dx) \geq d^s \rho(x).$ 
\end{lemma}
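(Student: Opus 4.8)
The plan is to obtain the inequality as a direct consequence of the $s$-convexity axiom $(c)$, applied to a decomposition of $x$ into a scalar multiple of $dx$ together with the zero vector. The key observation is that $\rho(0)=0$ by axiom $(a)$, so the term involving $0$ in the right-hand side of $(c)$ drops out, leaving a clean comparison between $\rho(x)$ and $\rho(dx)$. The whole proof therefore reduces to choosing the two coefficients in $(c)$ correctly.

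Concretely, I would fix $x\in X$ and $d\geq 1$ and set $a=1/d$. Since $d\geq 1$ we have $a^s=d^{-s}\in(0,1]$, so that $b:=(1-d^{-s})^{1/s}$ is a well-defined element of $[0,\infty)$ satisfying $a^s+b^s=d^{-s}+(1-d^{-s})=1$. Because $a(dx)+b\cdot 0 = \frac{1}{d}(dx)=x$, axiom $(c)$ yields
$$\rho(x)=\rho\bigl(a(dx)+b\cdot 0\bigr)\leq a^s\rho(dx)+b^s\rho(0)=\frac{1}{d^s}\rho(dx),$$
where the last equality uses $\rho(0)=0$. Multiplying through by $d^s$ gives $d^s\rho(x)\leq\rho(dx)$, which is exactly the claim.

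The argument is almost entirely mechanical, and I do not anticipate a genuine obstacle. The only points requiring a little care are verifying that the chosen coefficients satisfy the constraint $a^s+b^s=1$ (this is what forces the specific formula for $b$, and it is precisely the hypothesis $d\geq 1$ that keeps $b^s=1-d^{-s}$ nonnegative), and observing that the computation stays valid in the extended reals $[0,\infty]$: if $\rho(x)=\infty$, the displayed inequality forces $\tfrac{1}{d^s}\rho(dx)=\infty$ and hence $\rho(dx)=\infty$, so the conclusion $d^s\rho(x)\leq\rho(dx)$ holds without a separate case distinction. The case $d=1$ (where $a=1$, $b=0$) is likewise covered.
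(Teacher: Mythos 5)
Your proof is correct and is essentially identical to the paper's own argument: both decompose $x=\frac{1}{d}(dx)+\bigl(1-d^{-s}\bigr)^{1/s}\cdot 0$ and apply the $s$-convexity axiom $(c)$ together with $\rho(0)=0$ to obtain $\rho(x)\leq d^{-s}\rho(dx)$. Your additional remarks on the constraint $a^s+b^s=1$ and on the extended-real-valued case are sound but not needed beyond what the paper records.
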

\begin{proof}
Notice that 
$$
\rho(x) = \rho\left(\frac{dx}{d}\right) = \rho\left(\frac{1}{d}dx+ \left(1-\left(\frac{1}{d}\right)^s\right)^{1/s}0\right) \leq \left(\frac{1}{d}\right)^s \rho(dx),
$$
which shows our claim.
\end{proof}

Now we present the main results of this paper.
\begin{theorem}
\label{increasing}
 Let $ X$ be a linear space over $\mathbb{K},$ $ \mathbb{K} = \mathbb{R}$ or   $ \mathbb{K} = \mathbb{C}$ and let $ s \in (0,1].$ Fix $ n \geq 2$ and let $\rho_i$ be a $s$-convex semimodular defined on $X$ for any $i\in\{1,\dots,n-1\}$. Put $\rho=\max_{1\leq i\leq{n-1}}\{\rho_i\}.$
 Assume that $ f: \mathbb{R}^n \rightarrow [0, +\infty)$ is a convex function such that $f(x)=0$ if and only if $ x=0.$ Assume furthermore that for any  $ x= (1,x_2,\dots,x_{n}) \in (\mathbb{R}_+)^{n}$ and $ y=(1,y_2,\dots,y_{n}) \in (\mathbb{R}_+)^{n}$
 if $ x_j \leq y_j$ for $j=2,\dots,n,$ then
 \begin{equation}
 \label{crucial}
 f(x) \leq f(y).
 \end{equation}
 Let us define for $ x \in X_{\rho},$
 $$
 \|x\|_f = \inf_{k>0}\left\{k f\left(e_1+\sum_{i=2}^n\rho_{i-1}\left(\frac{x}{k^{1/s}}\right)e_i \right) \right\}.
 $$
 Then $ \| \cdot \|_f$ is an $s$-norm (norm if $s=1)$ in $ X_{\rho}.$
\end{theorem}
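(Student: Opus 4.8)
The plan is to verify the three defining properties $(a')$--$(c')$ of an $s$-norm, after two preliminaries. First, $\rho=\max_i\rho_i$ is itself an $s$-convex semimodular (axioms $(a)$--$(c)$ transfer routinely to a pointwise maximum of finitely many semimodulars, using $\rho_i\le\rho$ for $(a)$), so Lemma~\ref{lem1} is available for $\rho$. Second, it is convenient to substitute $\lambda=k^{1/s}$ and write, for $x\in X_\rho$,
$$\|x\|_f=\inf_{\lambda>0}\lambda^{s}f\!\left(e_1+\sum_{i=2}^{n}\rho_{i-1}\!\left(\tfrac{x}{\lambda}\right)e_i\right),$$
abbreviating the argument by $v_\lambda(x)$. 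Finiteness of $\|x\|_f$ is then immediate: picking $d>0$ with $\rho(dx)<\infty$ and any $\lambda\ge 1/d$, the $s$-convexity of each $\rho_{i-1}$ gives $\rho_{i-1}(x/\lambda)\le(\lambda d)^{-s}\rho(dx)<\infty$, so $v_\lambda(x)\in(\mathbb{R}_+)^n$ and $\|x\|_f\le\lambda^sf(v_\lambda(x))<\infty$. Property $(b')$ is a scaling computation: for $\lambda\in\mathbb{K}$ axiom $(b)$ lets me replace $\lambda$ by $|\lambda|$ inside every $\rho_{i-1}$, and the substitution $\mu=\lambda''/|\lambda|$ in the infimum yields $\|\lambda x\|_f=|\lambda|^s\|x\|_f$; in particular $v_\lambda(0)=e_1$ gives $\|0\|_f=\inf_{\lambda>0}\lambda^sf(e_1)=0$.

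For $(c')$ I would fix $\varepsilon>0$, choose $\lambda_1,\lambda_2>0$ nearly optimal for $x$ and $y$ (necessarily with finite coordinates), and set $\lambda=(\lambda_1^s+\lambda_2^s)^{1/s}$, $a=\lambda_1/\lambda$, $b=\lambda_2/\lambda$, so that $a^s+b^s=1$. Since $\tfrac{1}{\lambda}(x+y)=a\cdot\tfrac{x}{\lambda_1}+b\cdot\tfrac{y}{\lambda_2}$, axiom $(c)$ of each $\rho_{i-1}$ gives the coordinatewise bound
$$\rho_{i-1}\!\left(\tfrac{x+y}{\lambda}\right)\le a^{s}\rho_{i-1}\!\left(\tfrac{x}{\lambda_1}\right)+b^{s}\rho_{i-1}\!\left(\tfrac{y}{\lambda_2}\right).$$
All vectors here have first coordinate $1$, so the monotonicity hypothesis \eqref{crucial} allows me to replace $v_\lambda(x+y)$ by $a^{s}v_{\lambda_1}(x)+b^{s}v_{\lambda_2}(y)$ inside $f$ without decreasing it; convexity of $f$ with weights $a^s+b^s=1$ then gives $f(v_\lambda(x+y))\le a^s f(v_{\lambda_1}(x))+b^s f(v_{\lambda_2}(y))$. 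Multiplying by $\lambda^s$ and using $\lambda^sa^s=\lambda_1^s$, $\lambda^sb^s=\lambda_2^s$ yields $\|x+y\|_f\le\lambda_1^sf(v_{\lambda_1}(x))+\lambda_2^sf(v_{\lambda_2}(y))\le\|x\|_f+\|y\|_f+2\varepsilon$, and $\varepsilon\to0$ finishes the triangle inequality.

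The remaining and most delicate point is positivity in $(a')$: for $x\ne0$ I must show $\inf_{\lambda>0}\lambda^sf(v_\lambda(x))>0$. The monotonicity \eqref{crucial} gives $f(v_\lambda(x))\ge f(e_1)>0$ for every $\lambda$, which settles all $\lambda$ bounded away from $0$; the genuine difficulty is the regime $\lambda\to0^+$, where the prefactor $\lambda^s$ vanishes. To handle it I would first prove a growth lemma for $f$: for each direction $e_{j+1}$ the convex map $M\mapsto f(Me_{j+1})$ is nonnegative, vanishes only at $M=0$ (its minimum), hence is nondecreasing and, not being constant, tends to $\infty$; a midpoint-convexity estimate then shows $f(e_1+Me_{j+1})\to\infty$ as well, and convexity supplies an affine lower bound $f(e_1+Me_{j+1})\ge a_jM-b_j$ with $a_j>0$. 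Next, choosing $d_0>0$ with $c:=\rho(d_0x)>0$ (possible since $x\ne0$, by axiom $(a)$), Lemma~\ref{lem1} gives $\rho(x/\lambda)\ge c\,(\lambda d_0)^{-s}$ for $\lambda\le 1/d_0$; realizing this maximum as $\rho_{j(\lambda)}(x/\lambda)$ and applying monotonicity to $v_\lambda(x)\ge e_1+\rho(x/\lambda)\,e_{j(\lambda)+1}$ together with the growth bound yields $\lambda^sf(v_\lambda(x))\ge a\,c\,d_0^{-s}-b\lambda^s$ for uniform constants $a=\min_j a_j>0$ and $b=\max_j b_j$. Hence $\liminf_{\lambda\to0^+}\lambda^sf(v_\lambda(x))\ge ac\,d_0^{-s}>0$, and combined with the easy regime the infimum is strictly positive. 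I expect precisely this growth analysis of $f$ near $\lambda=0$ to be the main obstacle, as it is the only step that must combine convexity, strict positivity and the one-sided monotonicity of $f$ with the scaling estimate of Lemma~\ref{lem1}.
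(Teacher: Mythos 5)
Your proposal is correct, and on three of the four properties it coincides with the paper's own proof: finiteness, the value at $0$, and homogeneity are handled identically (up to the substitution $k=\lambda^s$), and your triangle inequality is exactly the paper's argument, which splits $\frac{x+y}{(u+v)^{1/s}}$ with weights $a=u^{1/s}/(u+v)^{1/s}$, $b=v^{1/s}/(u+v)^{1/s}$ and then applies $s$-convexity of each $\rho_{i-1}$, then \eqref{crucial}, then convexity of $f$. The genuine divergence is in the positivity step, which you rightly single out as the delicate point. The paper argues by cases: if $\rho(\lambda x)\in\{0,\infty\}$ for every $\lambda>0$ it fixes $k_0$ with $\rho(x/k_0^{1/s})=\infty$ and bounds the infimum below by $k_0f(e_1)$; otherwise it picks $k_0$ with $0<\rho(x/k_0^{1/s})<\infty$, combines Lemma~\ref{lem1} with the estimate $f(tv)\ge tf(v)$ for $t\ge 1$ (pulling the large factor $k_0/k$ through $f$) to get $kf\bigl(e_1+\sum_{i=2}^n\rho_{i-1}(x/k^{1/s})e_i\bigr)\ge k_0f\bigl(\tfrac{k}{k_0}e_1+\sum_{i=2}^n\rho_{i-1}(x/k_0^{1/s})e_i\bigr)$, and then uses continuity of the finite convex function $f$ as $k\to 0^+$. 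You instead prove a uniform growth lemma for $f$ along the coordinate directions --- the affine minorant $f(e_1+Me_{j+1})\ge a_jM-b_j$, which your midpoint identity $\tfrac{M}{2}e_{j+1}=\tfrac12(e_1+Me_{j+1})+\tfrac12(-e_1)$ indeed yields with explicit constants $a_j=f(e_{j+1})>0$, $b_j=f(-e_1)$ for $M\ge 2$ --- and couple it with the Lemma~\ref{lem1} scaling $\rho(x/\lambda)\ge\rho(d_0x)(\lambda d_0)^{-s}$, so that the prefactor $\lambda^s$ cancels exactly. This buys you a single uniform argument with no case distinction (your $c=\rho(d_0x)$ may be $+\infty$, which your inequality tolerates under the same infinite-coordinate convention the paper itself uses), no appeal to continuity of $f$, and an explicit quantitative lower bound on $\|x\|_f$; the paper's route, by contrast, never evaluates $f$ outside the positive cone. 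One small polish point: phrase the conclusion as the pointwise estimate $\lambda^sf(v_\lambda(x))\ge ac\,d_0^{-s}-b\lambda^s$ valid for all $\lambda$ in an interval $(0,\delta]$ (which is what you actually proved and what the infimum requires), rather than as a $\liminf$; and note that the affine minorant is only invoked for large $M$, which suffices since $\rho(x/\lambda)\to\infty$ as $\lambda\to 0^+$.
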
 
\begin{proof}
First observe that $ \| x\|_f \in \mathbb{R}$ for any $ x \in X_{\rho}.$ Fix $ x \in X_{\rho}.$ Then there exists $ d>0$ such that $ \rho(dx) < \infty.$ Then 
$$
kf\left(e_1+\sum_{i=2}^n\rho_{i-1}\left(\frac{x}{k^{1/s}}\right)e_i\right)< \infty,
$$ 
for $ k =1/d,$ which shows that $\| x\|_f \in \mathbb{R}.$
Since $ \lim_{k \rightarrow 0^+} k f(e_1) = 0$ we have $ \| 0 \|_f =0.$ Now fix $ x \in X_{\rho} \setminus\{0\}.$ We show that $ \| x\|_f >0.$ First, we assume that $\rho(\lambda x)\in\{0,\infty\}$ for any $\lambda>0$. Fix $k_0>0$ such that $\rho(x/k_0^{1/s})=\infty$. Next, by \eqref{crucial} we notice that for $k\geq{k_0}$,
\begin{equation*}
kf\left(e_1+\sum_{i=2}^{n}\rho_{i-1}\left(\frac{x}{k^{1/s}}\right)e_i\right) \geq k_0 f(e_1)>0.
\end{equation*}
On the other hand, since $f$ is a convex function and $\rho(x/k^{1/s})=\infty$ for any $k<k_0$ we conclude for any $k<k_0$, 
\begin{equation*}
kf\left(e_1+\sum_{i=2}^n\rho_{i-1}\left(\frac{x}{k^{1/s}}\right)e_i\right)=\infty.
\end{equation*}
Hence, we have $\norm{x}{f}{}\geq{k_0}f(e_1)>0$. Now, assume that $ 0 <\rho(x/(k_0)^{1/s}) < \infty $ for some $ k_0 >0$. Notice that for $ 0<k \leq k_0,$ by $s$-convexity of $\rho_i$ for any $i\in\{1,\dots,n-1\}$, by \eqref{crucial} and by Lemma \ref{lem1} we get
\begin{align*}
kf\left(e_1+\sum_{i=2}^n\rho_{i-1}\left(\frac{x}{k^{1/s}}\right)e_i\right)
&=kf\left(e_1+\sum_{i=2}^n\rho_{i-1}\left(\left(\frac{k_0}{k}\right)^{1/s}\frac{x}{k_0^{1/s}}\right)e_i\right)\\
&\geq
kf\left(e_1+\frac{k_0}{k}\sum_{i=2}^n\rho_{i-1}\left(\frac{x}{k_0^{1/s}}\right)e_i\right)\\
&\geq
k_0f\left(\frac{k}{k_0}e_1+\sum_{i=2}^n\rho_{i-1}\left(\frac{x}{k_0^{1/s}}\right)e_i\right).
\end{align*}
Since $0<\rho(x/k_0^{1/s})<\infty$, we observe that 
\begin{align*}
\lim_{k\rightarrow 0^+}k_0f\left(\frac{k}{k_0}e_1+\sum_{i=2}^n\rho_{i-1}\left(\frac{x}{k_0^{1/s}}\right)e_i\right)
=k_0f\left(\sum_{i=2}^n\rho_{i-1}\left(\frac{x}{k_0^{1/s}}\right)e_i\right)>0.
\end{align*}
Hence, there exists $ \delta > 0$ such that 
\begin{equation*}
kf\left(e_1+\sum_{i=2}^n\rho_{i-1}\left(\frac{x}{k^{1/s}}\right)e_i\right) \geq\frac{k_0}{2}f\left(\sum_{i=2}^n\rho_{i-1}\left(\frac{x}{k_0^{1/s}}\right)e_i\right)
\end{equation*}
for all $ 0 < k \leq \delta.$ Moreover, for any $ k > \delta, $ 
\begin{align*}
kf\left(e_1+\sum_{i=2}^n\rho_{i-1}\left(\frac{x}{k^{1/s}}\right)e_i\right)
&>\delta f\left(e_1+\sum_{i=2}^n\rho_{i-1}\left(\frac{x}{k^{1/s}}\right)e_i\right)\\
&\geq\delta f(e_1)>0.
\end{align*}
Consequently, for any $k>0$ we obtain 
\begin{equation*}
kf\left(e_1+\sum_{i=2}^n\rho_{i-1}\left(\frac{x}{k^{1/s}}\right)e_i\right)
\geq \min\left\{\frac{k_0}{2}f\left(\sum_{i=2}^n\rho_{i-1}\left(\frac{x}{k_0^{1/s}}\right)e_i\right),\delta f(e_1)\right\} >0,
\end{equation*}
which shows that $ \| x\|_f >0.$
\newline
Now we show that $ \| x+y\|_f \leq \|x\|_f + \|y\|_f $ for any $ x,y \in X_{\rho}.$
Fix $\epsilon > 0$ and $x,y \in X_{\rho}.$  By definiton of $ \| \cdot \|_f$ there exists $ u, v >0 $ such that 
$$
u f\left(e_1+\sum_{i=2}^n\rho_{i-1}\left(\frac{x}{u^{1/s}}\right)e_i\right) < \|x\|_f + \epsilon 
$$
and
$$
v f\left(e_1+\sum_{i=2}^n\rho_{i-1}\left(\frac{y}{v^{1/s}}\right)e_i\right)< \|y\|_f + \epsilon . 
$$
Notice that 
\begin{align*}
&(u+v)f\left(e_1+\sum_{i=2}^n\rho_{i-1}\left(\frac{x+y}{(u+v)^{1/s}}\right)e_i\right)\\
&=(u+v)f\left(e_1+\sum_{i=2}^n\rho_{i-1}\left(\frac{ax}{u^{1/s}}+\frac{by}{v^{1/s}}\right)e_i\right),
\end{align*}
where 
$$a=\frac{u^{1/s}}{(u+v)^{1/s}}\qquad\textnormal{and}\qquad{}b=\frac{v^{1/s}}{(u+v)^{1/s}}.$$
Next, since $\rho_j$ is a $s$-convex semimodular for any $j\in\{1,\dots,n-1\}$, by (\ref{crucial}) we get
\begin{align*}
&(u+v)f\left(e_1+\sum_{i=2}^n\rho_{i-1}\left(\frac{ax}{u^{1/s}}+\frac{by}{v^{1/s}}\right)e_i\right)\\ 
&\leq(u+v)a^sf\left(e_1+\sum_{i=2}^n\rho_{i-1}\left(\frac{x}{u^{1/s}}\right)e_i\right)
+(u+v)b^sf\left(e_1+\sum_{i=2}^n\rho_{i-1}\left(\frac{y}{v^{1/s}}\right)e_i\right)\\
&=uf\left(e_1+\sum_{i=2}^n\rho_{i-1}\left(\frac{x}{u^{1/s}}\right)e_i\right)
+vf\left(e_1+\sum_{i=2}^n\rho_{i-1}\left(\frac{y}{v^{1/s}}\right)e_i\right)\\
&<\|x\|_f + \|y\|_f + 2 \epsilon,
\end{align*}
which shows our claim.
\newline
Now, we show that for any $ x \in X_{\rho}$ and $u\in \mathbb{R},$ $ \|ux\|_f = |u|^s \|x\|_f.$ It is obvious that $ \|0x\|_f = |0|^s \|x\|_f =0.$ Hence, we can assume that $u\neq 0.$ Since $\rho_i(z) = \rho_i(-z)$ for any $z\in X_{\rho}$ and $i\in\{1,\dots,n-1\}$, we observe that for any $k>0,$
\begin{align*}
kf\left(e_1+\sum_{i=2}^n\rho_{i-1}\left(\frac{ux}{k^{1/s}}\right)e_i\right)
&=kf\left(e_1+\sum_{i=2}^n\rho_{i-1}\left(\frac{|u|x}{k^{1/s}}\right)e_i\right)\\
&=|u|^s{a}f\left(e_1+\sum_{i=2}^n\rho_{i-1}\left(\frac{x}{a^{1/s}}\right)e_i\right),
\end{align*}
where $ a = {k}/{|u|^s}.$ Then, taking the infimum of the above equality on the left side over all $k>0$ and on the right side over $a>0$ we get our claim. So, the proof is complete.
\end{proof}

Now, applying Theorem\ref{increasing} we get the following theorem.

\begin{theorem}
 \label{increasing1}
Let $ \rho_1$ and $f: \mathbb{R}^2 \rightarrow \mathbb{R}$ be as in Theorem \ref{increasing} and let $ \rho_1$ be an $s$-convex semimodular. Then the function
$$
 \|x\|_f = \inf_{k>0} \{k f(1, \rho_1(x/k^{1/s})) \}
 $$
is an $s$-convex norm (norm if $s=1)$ on $ X_{\rho_1}.$
\end{theorem}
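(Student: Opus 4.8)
The plan is to recognize Theorem \ref{increasing1} as nothing more than the special case $n=2$ of Theorem \ref{increasing}, so that no genuinely new argument is needed beyond matching the notation. The entire content of the statement is that the $n=2$ instance of the general construction produces the functional written here.

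First I would observe that when $n=2$ the index set $\{1,\dots,n-1\}$ collapses to the single element $1$, so the family of semimodulars reduces to $\{\rho_1\}$ and the maximum $\rho=\max_{1\leq i\leq n-1}\{\rho_i\}$ is simply $\rho_1$; in particular $X_\rho=X_{\rho_1}$. Correspondingly, the vector appearing inside $f$ simplifies, since the sum has a single term:
$$
e_1+\sum_{i=2}^{2}\rho_{i-1}\!\left(\frac{x}{k^{1/s}}\right)e_i
= e_1+\rho_1\!\left(\frac{x}{k^{1/s}}\right)e_2
= \left(1,\ \rho_1\!\left(\frac{x}{k^{1/s}}\right)\right)\in\mathbb{R}^2 .
$$
Hence the functional defined in Theorem \ref{increasing1}, namely $\inf_{k>0}\{k f(1,\rho_1(x/k^{1/s}))\}$, is literally the functional $\|\cdot\|_f$ of Theorem \ref{increasing} instantiated at $n=2$.

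Next I would check that the hypotheses transfer verbatim. Because $f$ and $\rho_1$ are taken ``as in Theorem \ref{increasing}'', $\rho_1$ is an $s$-convex semimodular on $X$, while $f\colon\mathbb{R}^2\to[0,+\infty)\subseteq\mathbb{R}$ is convex and satisfies $f(x)=0$ if and only if $x=0$. The only point demanding a word of care is the monotonicity requirement \eqref{crucial}: for $n=2$ it reads that whenever $x=(1,x_2)$ and $y=(1,y_2)$ in $(\mathbb{R}_+)^2$ satisfy $x_2\leq y_2$ one has $f(x)\leq f(y)$, i.e.\ that $t\mapsto f(1,t)$ is nondecreasing on $\mathbb{R}_+$ — which is exactly \eqref{crucial} read at $n=2$ and is part of the standing hypothesis. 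With all hypotheses of Theorem \ref{increasing} verified for $n=2$, I would simply invoke that theorem to conclude that $\|\cdot\|_f$ is an $s$-norm (a norm when $s=1$) on $X_{\rho_1}$, which is the assertion. I do not expect a real obstacle here: the proof is a direct specialization, and the only thing to monitor is the purely notational reduction of the single-term sum together with the harmless inclusion $[0,+\infty)\subseteq\mathbb{R}$ for the codomain of $f$.
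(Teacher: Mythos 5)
Your proposal is correct and matches the paper exactly: the paper's own proof consists of the single line ``It is necessary to apply Theorem \ref{increasing} for $n=2$'', which is precisely the specialization you carry out (your version is in fact more careful, spelling out the collapse of the sum to $(1,\rho_1(x/k^{1/s}))$ and the transfer of hypothesis \eqref{crucial} at $n=2$). No further comment is needed.
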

\begin{proof}
It is necessary to apply Theorem \ref{increasing} for $n=2$ 
\end{proof}

\begin{theorem}
\label{equivalent}
Let $f$ and $g$ be two norms on $ \mathbb{R}^n$ satisfying the requirements of Theorem \ref{increasing}. Let $ \rho_1,...,\rho_{n-1}$ and $\rho$ be as in Theorem \ref{increasing}. Then, there are $m,M>0$ such that for any $x\in{X_\rho}$ we have 
\begin{equation*}
m\norm{x}{f}{}\leq\norm{x}{g}{}\leq{M}\norm{x}{f}{}.
\end{equation*}
In particular case when $s=1$, then norms $\norm{\cdot}{f}{}$ and $\norm{\cdot}{g}{}$ are equivalent.
\end{theorem}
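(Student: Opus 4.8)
The plan is to reduce the statement to the classical equivalence of norms on the finite-dimensional space $\mathbb{R}^n$ and then to transport this equivalence through the infimum defining $\norm{\cdot}{f}{}$ and $\norm{\cdot}{g}{}$. Since $f$ and $g$ are both genuine norms on $\mathbb{R}^n$ and $\dim\mathbb{R}^n = n < \infty$, all norms on $\mathbb{R}^n$ are equivalent, so there exist constants $m, M > 0$ with
\begin{equation*}
m\, f(z) \leq g(z) \leq M\, f(z) \qquad \text{for all } z \in \mathbb{R}^n.
\end{equation*}
These are precisely the constants that will appear in the conclusion, so no further optimization is needed.

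The key observation is that for a fixed $x \in X_\rho$ and a fixed $k > 0$, the argument fed into $f$ and the argument fed into $g$ are literally the same element of $\mathbb{R}^n$, namely
\begin{equation*}
v_k := e_1 + \sum_{i=2}^n \rho_{i-1}\left(\frac{x}{k^{1/s}}\right) e_i.
\end{equation*}
Applying the pointwise estimate above to $z = v_k$ and multiplying through by the positive scalar $k$ yields $m\, k f(v_k) \leq k g(v_k) \leq M\, k f(v_k)$ for every $k > 0$.

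I would then pass to the infimum over $k > 0$, using two elementary facts about infima of nonnegative (possibly extended-real) quantities: multiplication by a fixed positive constant commutes with $\inf$, and a family-wise inequality between $\{k g(v_k)\}_{k>0}$ and $\{M\, k f(v_k)\}_{k>0}$ (respectively $\{m\, k f(v_k)\}_{k>0}$) is inherited by the infima. This gives $m \norm{x}{f}{} \leq \norm{x}{g}{} \leq M \norm{x}{f}{}$ at once. Finiteness of all three quantities is guaranteed by Theorem \ref{increasing}, so no indeterminate $\infty$ situations arise when manipulating the infima.

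I do not anticipate a genuine obstacle here; the only point requiring a little care is the interaction between the positive scalar multiplication and the infimum, but both needed facts are standard. Finally, in the case $s = 1$ Theorem \ref{increasing} guarantees that $\norm{\cdot}{f}{}$ and $\norm{\cdot}{g}{}$ are honest norms on $X_\rho$, so the two-sided domination by positive constants $m$ and $M$ is exactly the assertion that these norms are equivalent, which establishes the last sentence of the theorem.
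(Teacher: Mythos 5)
Your proposal is correct and follows essentially the same route as the paper's own proof: both invoke the equivalence of all norms on the finite-dimensional space $\mathbb{R}^n$ to obtain constants $m,M>0$, apply the resulting two-sided estimate to the common vector $e_1+\sum_{i=2}^n\rho_{i-1}\bigl(x/k^{1/s}\bigr)e_i$ for each $k>0$, and then pass to the infimum over $k>0$. The only difference is cosmetic: you spell out the elementary facts about infima and finiteness that the paper leaves implicit.
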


\begin{proof}
Since any two norms defined on $ \mathbb{R}^n $ are equivalent, there exists $ m, M > 0$ such that 
$$
mf(\cdot) \leq g(\cdot) \leq M f(\cdot).
$$
Hence for any $ k >0$ and $x \in X_{\rho},$ 
\begin{align*}
mkf\left(e_1+\sum_{i=2}^n\rho_{i-1}\left(\frac{x}{k^{1/s}}\right)e_i\right)
&\leq kg\left(e_1+\sum_{i=2}^n\rho_{i-1}\left(\frac{x}{k^{1/s}}\right)e_i\right)\\
&\leq Mkf\left(e_1+\sum_{i=2}^n\rho_{i-1}\left(\frac{x}{k^{1/s}}\right)e_i\right).
\end{align*}
Taking infimum over $ k > 0,$ we get that
$$
m\| \cdot \|_f \leq \| \cdot \|_g \leq M \| \cdot \|_f, 
$$
as required.
\end{proof}
\begin{corollary}
 \label{monotone}
 Let $ f: \mathbb{R}^n \rightarrow \mathbb{R}$ be a monotone norm on $ \mathbb{R}^n$ and let $ \rho_1,...,\rho_{n-1}$ be $s$-convex semimodulars defined on $X.$ Put $ \rho = \max \{  \rho_1,...,\rho_{n-1}\}.$
 Then the function $ \| \cdot\|_f$ is an  $s$-convex norm (norm if $s=1)$ on $ X_{\rho}.$ In particular, if $n=2$ and $ \rho_1 $ is a convex semimodular then $ \| \cdot \|_f$ is a norm on $ X_{\rho_1}.$ 
\end{corollary}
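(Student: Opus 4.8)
The plan is to deduce this corollary directly from Theorem \ref{increasing}; the entire content of the argument is to check that a monotone norm automatically meets every hypothesis imposed on $f$ there. Recall that $f$ is a \emph{monotone norm} on $\mathbb{R}^n$ precisely when it is a norm satisfying $f(u)\leq f(v)$ whenever $|u_i|\leq|v_i|$ for all $i\in\{1,\dots,n\}$. Being a norm, $f$ is convex and positively homogeneous, and it obeys $f(u)=0$ if and only if $u=0$; these are exactly the first two requirements on $f$ in Theorem \ref{increasing}.

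It remains to verify the monotonicity condition \eqref{crucial}. First I would fix $x=(1,x_2,\dots,x_n)\in(\mathbb{R}_+)^n$ and $y=(1,y_2,\dots,y_n)\in(\mathbb{R}_+)^n$ with $x_j\leq y_j$ for $j=2,\dots,n$. Since all coordinates are nonnegative, the first coordinates coincide, and $x_j\leq y_j$ for the remaining indices, we have $|x_i|=x_i\leq y_i=|y_i|$ for every $i\in\{1,\dots,n\}$. Applying the monotonicity of $f$ to this pair yields $f(x)\leq f(y)$, which is precisely \eqref{crucial}. Consequently every hypothesis of Theorem \ref{increasing} is fulfilled, and that theorem gives that $\|\cdot\|_f$ is an $s$-norm (a norm when $s=1$) on $X_\rho$.

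For the final assertion I would specialize to $n=2$ and $s=1$. In this case $\rho=\rho_1$ is a convex semimodular, $X_\rho=X_{\rho_1}$, and the formula for $\|\cdot\|_f$ collapses to the two-variable expression $\inf_{k>0}\{k f(1,\rho_1(x/k))\}$ already treated in Theorem \ref{increasing1}. Since $s=1$, the conclusion of Theorem \ref{increasing} is that $\|\cdot\|_f$ is an honest norm, which gives the ``in particular'' statement.

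I do not anticipate a genuine obstacle here: the corollary is essentially a reformulation of Theorem \ref{increasing} under a hypothesis (monotonicity of the norm) that is strong enough to force \eqref{crucial}. The only point deserving care is the logical translation between the ``monotone norm'' condition, stated for arbitrary vectors via absolute values, and condition \eqref{crucial}, stated only for nonnegative vectors sharing the first coordinate $1$; but since the vectors appearing in \eqref{crucial} already lie in $(\mathbb{R}_+)^n$, this translation is immediate.
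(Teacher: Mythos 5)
Your proposal is correct and follows exactly the paper's own route: the paper likewise observes that any monotone norm on $\mathbb{R}^n$ satisfies \eqref{crucial} and then invokes Theorem \ref{increasing} directly. Your write-up merely makes explicit the verifications (convexity, vanishing only at zero, and the translation from the absolute-value form of monotonicity to \eqref{crucial}) that the paper leaves as a one-line remark.
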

\begin{proof}
Observe that any monotone norm on $ \mathbb{R}^n$ satisfies (\ref{crucial}). Hence our result follows immediately from Theorem \ref{increasing}.
\end{proof}
\begin{remark}
\label{Luxemburg}
Observe that if $f$ is equal to the maximum norm on $ \mathbb{R}^2,$ $ \rho$ is a convex semimodular and $ x \in X_{\rho},$ then 
$$ 
\| x \|_f = \inf \{u>0: \rho(x/u) \leq 1 \},
$$
which means that $ \| \cdot \|_f$ coincides with the classical Luxemburg norm on $ X_{\rho}.$
If $f$ is equal to the $l_1$-norm on $ \mathbb{R}^2$ and $ \rho$ is a convex semimodular then $ \| \cdot \|_f$ is equal to the classical Orlicz-Amemiya norm on $ X_{\rho},$, which shows that the notion 
of $ \| \cdot \|_f$ is a natural generalization of two classical norms 
considered in semimodular spaces.
Moreover, if $ f$ is the $l_p$-norm on $ \mathbb{R}^2, $ $1 < p < \infty$ and $ \rho$ is a convex semimodular then $ \| \cdot \|_f$ is equal to the $p$-Orlicz-Amemiya norm on $ X_{\rho}$ (see \cite{CuHuWi}).
\end{remark}

\begin{definition}\label{def:regular}
	Assume that $f$ and $X_\rho$ satisfy requirements of Theorem \ref{increasing}. Suppose that for any $x\in{X_\rho}$ and $a>0$ such that $\rho(ax)<\infty$, the function 
	\begin{equation}\label{map:continuous}
	u\rightarrow\rho(ux)
	\end{equation} 
	is continuous in $[0,a]$. We say that $(X_\rho,\norm{\cdot}{f}{})$ is called \textit{regular} if for any $x\in{X_\rho}\setminus\{0\}$ we have $$\liminf_{u\rightarrow{0^+}} uf\left(e_1+\sum_{i=2}^n\rho_{i-1}\left(\frac{x}{u^{1/s}}\right)e_i\right)>\norm{x}{f}{}.$$ 
\end{definition}

\begin{theorem}\label{thm:regular}
	Let $f:\mathbb{R}^n\rightarrow[0,\infty)$ be a convex function satisfying \eqref{crucial} and let $\rho_i$ be a $s$-convex semimodular for every $i\in\{1,\dots,n-1\}$ and $\rho=\max_{1\leq i\leq{n-1}}\{\rho_i\}$. Then the following assertions are satisfied.
	\begin{itemize}
		\item[$(i)$] If $(X_\rho,\norm{\cdot}{f}{})$ is regular, then for any $x\in{X_\rho}$ there exists $k_0>0$ such that 
		\begin{equation}\label{equ:regular}
		\norm{x}{f}{}= k_0f\left(e_1+\sum_{i=2}^n\rho_{i-1}\left(\frac{x}{k_0^{1/s}}\right)e_i\right).
		\end{equation}
		\item[$(ii)$] For any $x\in{X_\rho}\setminus{E_\rho}$ there exists $k_0>0$ such that \eqref{equ:regular} holds.
		\item[$(iii)$] If $f$ is a monotone norm, then for any $x\in{X_\rho}$ either there is $k_0>0$ such that \eqref{equ:regular} is satisfied or 
		\begin{equation*}
		\norm{x}{f}{}=\lim_{k\rightarrow 0^+}{kf\left(\sum_{i=2}^{n}\rho_{i-1}\left(\frac{x}{k^{1/s}}\right)e_i\right)}.
		\end{equation*} 
	\end{itemize}
\end{theorem}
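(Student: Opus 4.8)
The plan is to analyze, for a fixed $x\in X_\rho\setminus\{0\}$, the scalar function $\phi_x(k)=kf\big(e_1+\sum_{i=2}^n\rho_{i-1}(x/k^{1/s})e_i\big)$ on $(0,\infty)$, whose infimum is $\|x\|_f$, and to decide in each case whether this infimum is attained. Throughout I use left-continuity of the $\rho_i$ in the sense defined in Section~2 (i.e. $\lambda\mapsto\rho_i(\lambda x)$ is left-continuous), which I take as a standing assumption. First I would fix the shape of $\phi_x$. Writing $x/k_2^{1/s}=(k_1/k_2)^{1/s}x/k_1^{1/s}$ and applying Lemma~\ref{lem1} shows each coordinate $k\mapsto\rho_{i-1}(x/k^{1/s})$ is nonincreasing; since $t\mapsto\rho_{i-1}(tx)$ is $s$-convex it is continuous on the interior of its region of finiteness. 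Hence, with $k^{*}=\inf\{k>0:\rho(x/k^{1/s})<\infty\}$, we have $\phi_x\equiv+\infty$ on $(0,k^{*})$ while $\phi_x$ is finite and continuous on $(k^{*},\infty)$; by \eqref{crucial}, $\phi_x(k)\ge kf(e_1)\to\infty$ as $k\to\infty$. Left-continuity forces $\lim_{k\to(k^{*})^+}\phi_x(k)=\phi_x(k^{*})$, so $\phi_x$ is lower semicontinuous and coercive on the closed half-line $[k^{*},\infty)$. Being proper, lower semicontinuous and coercive there, $\phi_x$ attains its infimum on $[k^{*},\infty)$; the only way the infimum over all of $(0,\infty)$ can fail to be a minimum is the escape $k\to0^+$, which requires $k^{*}=0$.

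This settles (ii) at once: if $x\in X_\rho\setminus E_\rho$ then $\rho(dx)=\infty$ for some $d>0$, so $k^{*}\ge d^{-s}>0$, there is no room for a $0^+$-escape, and the minimum is attained at some $k_0\ge k^{*}$, which is \eqref{equ:regular}. For (i), regularity supplies $\liminf_{k\to0^+}\phi_x(k)>\|x\|_f$, so $\phi_x>\|x\|_f$ on some $(0,\delta)$; together with $\phi_x\to\infty$ at $+\infty$ this confines a minimizing sequence to a compact subinterval of $(0,\infty)$ on which $\phi_x$ is continuous (the continuity hypothesis built into Definition~\ref{def:regular}), so a convergent subsequence yields $k_0$ with $\phi_x(k_0)=\|x\|_f$. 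The substantive case here is $x\in E_\rho$, where $k^{*}=0$ and regularity is precisely what blocks the $0^+$-escape.

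For (iii), I would use that the monotone norm $f$ is positively homogeneous and monotone in all coordinates. Set $\Psi(k)=kf\big(\sum_{i=2}^n\rho_{i-1}(x/k^{1/s})e_i\big)$. Lemma~\ref{lem1} gives $\rho_{i-1}(x/k_1^{1/s})\ge(k_2/k_1)\rho_{i-1}(x/k_2^{1/s})$ for $k_1<k_2$, and with monotonicity and homogeneity of $f$ this shows $\Psi$ is nonincreasing, so $L_0:=\lim_{k\to0^+}\Psi(k)=\sup_{k>0}\Psi(k)$ exists in $[0,\infty]$. The triangle inequality for $f$ gives $|\phi_x(k)-\Psi(k)|\le kf(e_1)\to0$, whence $\lim_{k\to0^+}\phi_x(k)=L_0$. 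If $\|x\|_f=L_0$, the second alternative of (iii) holds by the definition of $L_0$. If $\|x\|_f<L_0$, then $\phi_x$ stays above $\|x\|_f$ as $k\to0^+$, so the infimum is not approached there and, by the lower semicontinuity and coercivity established in the first paragraph, it is attained at some $k_0$, giving \eqref{equ:regular}.

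I expect the endpoint analysis to be the only real obstacle. Deriving continuity of $\phi_x$ on $(k^{*},\infty)$ from $s$-convexity, and, above all, securing right-continuity of $\phi_x$ at $k^{*}$ from left-continuity of $\rho$, is the crux: without it the infimum in (ii) could be a right-limit at $k^{*}$ that is never attained. Once $\phi_x$ is known to be lower semicontinuous and coercive on $[k^{*},\infty)$, the three parts differ only in how the remaining escape $k\to0^+$ is excluded—automatically in (ii) via $k^{*}>0$, by hypothesis in (i), and by the dichotomy against $L_0$ in (iii).
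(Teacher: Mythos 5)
Your overall strategy is the same as the paper's: analyze the one--variable function $\phi_x(k)=kf\bigl(e_1+\sum_{i=2}^n\rho_{i-1}(x/k^{1/s})e_i\bigr)$, get coercivity at $+\infty$ from $kf(e_1)$, and block the escape $k\to0^+$ (automatically in (ii), by regularity in (i), and by the $\Psi$--sandwich in (iii), which is precisely the paper's inequality \eqref{equ:f:norm:triangle}). The genuine gap is the step you yourself call the crux: the claim that, since $t\mapsto\rho_{i-1}(tx)$ is $s$-convex, it is continuous on the interior of its region of finiteness. This is true for $s=1$ (a finite convex function of one real variable is continuous on the interior of its effective domain), but it is \emph{false} for $s\in(0,1)$, and the theorem covers all $s\in(0,1]$. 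Concretely, on $X=\mathbb{R}$ let $\rho(t)=|t|$ for $|t|\le 1$ and $\rho(t)=2|t|$ for $|t|>1$. A direct case check shows that $\rho(at_1+bt_2)\le a^{1/2}\rho(t_1)+b^{1/2}\rho(t_2)$ whenever $a^{1/2}+b^{1/2}=1$ (the key point being $a+b\le(\sqrt a+\sqrt b)^2=1$, together with $t\le\rho(t)\le 2t$), so $\rho$ is a $\tfrac12$-convex semimodular, everywhere finite, even left-continuous --- yet it jumps at $|t|=1$. Hence $\phi_x$ can have interior discontinuities, and your assertion that $\phi_x$ is ``finite and continuous on $(k^*,\infty)$'' fails, which undermines the lower semicontinuity you build parts (ii) and (iii) on. The paper never claims such automatic continuity: continuity of $u\mapsto\rho(ux)$ on $[0,a]$ whenever $\rho(ax)<\infty$ is an explicit \emph{hypothesis} (the assumption preceding Definition \ref{def:regular}), and the paper's proof restricts the infimum to a compact interval $[m_x,M_x]$ on which that assumed continuity applies.

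The good news is that your conclusion can be salvaged from your own standing assumption, just not by the route you give. Left-continuity of $\lambda\mapsto\rho_i(\lambda x)$ at every $\lambda_0>0$ is equivalent to right-continuity of the nonincreasing map $k\mapsto\rho_i(x/k^{1/s})$; consequently $\phi_x$ is right-continuous with $\phi_x(k_0)=\phi_x(k_0^+)\le\phi_x(k_0^-)$ at every $k_0$, i.e.\ all its jumps are downward and $\phi_x$ is lower semicontinuous on all of $(0,\infty)$ (including at $k^*$, where left-continuity in the extended sense forces $\phi_x(k)\to\infty$ when $\rho(x/(k^*)^{1/s})=\infty$). Lower semicontinuity plus coercivity is all your attainment argument needs, so replacing the false interior-continuity claim by this monotonicity/right-continuity observation repairs (i), (ii) and (iii). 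One further bookkeeping point: parts (ii) and (iii) as stated in the theorem carry no continuity hypothesis at all, and some such hypothesis is genuinely necessary --- for the convex semimodular $\rho(t)=|t|$ on $|t|<1$, $\rho(t)=\infty$ for $|t|\ge1$ (not left-continuous), with $f$ the $\ell_1$-norm, one has $\phi_1(k)=k+1$ for $k>1$ and $\phi_1(k)=\infty$ for $k\le1$, so the infimum $2$ is never attained although $1\in X_\rho\setminus E_\rho$. Both you (via left-continuity) and the paper (via the continuity assumption inside Definition \ref{def:regular}) are tacitly supplying the missing hypothesis; it would be worth saying this explicitly.
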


\begin{proof}
	$(i)$. We claim that if $(X_\rho,\norm{\cdot}{f}{})$ is regular, then by the compactness argument for each $x\in{X_\rho}$ there exists $k_0>0$ such that 
	\begin{equation}
	\norm{x}{f}{}= k_0f\left(e_1+\sum_{i=2}^n\rho_{i-1}\left(\frac{x}{k_0^{1/s}}\right)e_i\right).
	\end{equation}
	Indeed, since $f$ satisfies \eqref{crucial} we easily observe that for any $k>0$,
	\begin{equation*}
	kf\left(e_1+\sum_{i=2}^{n}\rho_{i-1}\left(\frac{x}{k^{1/s}}\right)e_i\right)\geq{k}f(e_1)>0.
	\end{equation*}
	Hence, there exists $M_x>0$ such that
	\begin{equation*}
	\norm{x}{f}{}=\inf_{k\in(0,M_x]}\left\{kf\left(e_1+\sum_{i=2}^{n}\rho_{i-1}\left(\frac{x}{k^{1/s}}\right)e_i\right)\right\}.
	\end{equation*}
	Therefore, since $(X_\rho,\norm{\cdot}{f}{})$ is regular there is $0<m_x<M_x$ such that $\rho(x/m_x^{1/s})<\infty$ and
	\begin{equation*}
	\norm{x}{f}{}=\inf_{k\in[m_x,M_x]}\left\{kf\left(e_1+\sum_{i=2}^{n}\rho_{i-1}\left(\frac{x}{k^{1/s}}\right)e_i\right)\right\}.
	\end{equation*}
	Next, by our assumption, the mapping given by
	\begin{equation*}
	k\rightarrow{kf\left(e_1+\sum_{i=2}^{n}\rho_{i-1}\left(\frac{x}{k^{1/s}}\right)e_i\right)}
	\end{equation*}
	is real-valued and continuous in $[m_x,M_x]$. So, there exists $k_0\in[m_x,M_x]$ such that \eqref{equ:regular} is satisfied.\\ 
	$(ii)$. Now, we suppose that $(X_\rho,\norm{\cdot}{f}{})$ is not regular. Then, we notice that for any $x\in{X_\rho}\setminus{E_\rho}$, there exists $k_x>0$ such that $\rho(x/k_x^{1/s})=\infty$, and consequently
	\begin{equation*}
	\lim_{k\rightarrow 0^+}{kf\left(e_1+\sum_{i=2}^{n}\rho_{i-1}\left(\frac{x}{k^{1/s}}\right)e_i\right)}=\infty.
	\end{equation*}
	Hence, there exists $k_0>0$ such that \eqref{equ:regular} holds.\\
	$(ii)$. Now, assume that $f$ is a monotone norm. Then, by the triangle inequality of the norm $f$ we have
	\begin{align}\label{equ:f:norm:triangle}
	-kf(e_1)+kf\left(\sum_{i=2}^{n}\rho_{i-1}\left(\frac{x}{k^{1/s}}\right)e_i\right)
	&\leq
	kf\left(e_1+\sum_{i=2}^{n}\rho_{i-1}\left(\frac{x}{k^{1/s}}\right)e_i\right)\\
	&\leq{}kf\left(\sum_{i=2}^{n}\rho_{i-1}\left(\frac{x}{k^{1/s}}\right)e_i\right)+kf(e_1).\nonumber
	\end{align}
	for any $k>0$ and $x\in{X_\rho}$. Next, by monotonicity and convexity of $f$, by $s$-convexity of $\rho_i$ for all $i\in\{1,\dots,n-1\}$ and by Lemma \ref{lem1} it is easy to see that for any $0<u\leq{}v$,
	\begin{equation*}
	vf\left(\sum_{i=2}^{n}\rho_{i-1}\left(\frac{x}{v^{1/s}}\right)e_i\right)\leq{}uf\left(\sum_{i=2}^{n}\rho_{i-1}\left(\frac{x}{u^{1/s}}\right)e_i\right).
	\end{equation*}
	In consequence, by \eqref{equ:f:norm:triangle} we get
	\begin{equation*}
	\liminf_{k\rightarrow 0^+}{kf\left(e_1+\sum_{i=2}^{n}\rho_{i-1}\left(\frac{x}{k^{1/s}}\right)e_i\right)}=\lim_{k\rightarrow 0^+}{kf\left(\sum_{i=2}^{n}\rho_{i-1}\left(\frac{x}{k^{1/s}}\right)e_i\right)}.
	\end{equation*}
	If $(X_\rho,\norm{\cdot}{f}{})$ is not regular then for any $x\in{X_\rho}$ we have either  \eqref{equ:regular} holds for some $k_0>0$ or 
	\begin{equation*}
	\norm{x}{f}{}=\lim_{k\rightarrow 0^+}{kf\left(\sum_{i=2}^{n}\rho_{i-1}\left(\frac{x}{k^{1/s}}\right)e_i\right)}.
	\end{equation*} 
\end{proof}

\begin{remark}\label{rem:regular:Orlicz}
	Let $f:\mathbb{R}^n\rightarrow[0,\infty)$ be a function as in Theorem \ref{increasing} and let $g_i:\mathbb{R}^+\rightarrow\mathbb{R}^+$ be a convex function such that $g_i(0)=0$, $g_i\neq{0}$ for any $i\in\{1,\dots,n-1\}$. Assume that 
	\begin{equation}\label{N-funtion}
	\lim_{u\rightarrow\infty}\frac{g_1(u)}{u}=\infty.
	\end{equation}
	Fix $s\in(0,1]$. Define for any $i\in\{1,\dots,n-1\}$ and $u\geq{0}$,
    \begin{equation*}
	\phi_i(u)=g_i(u^s),\quad\rho_i(x)=I_{\phi_i}(|x|)\quad\textnormal{ and }\quad\rho=\max_{1\leq i\leq{n-1}}\{\rho_i\}
    \end{equation*}	
	for any $x\in{L^0}$. Observe that $\phi_i$ is an $s$-convex function for $i\in\{1,\dots,n-1\}$. We claim that $(X_\rho,\norm{\cdot}{f}{})$ is regular. Indeed, taking $x\in{X_\rho}\setminus\{0\}$, by convexity of $f$ and \eqref{crucial} we observe that
	\begin{align*}
	\liminf_{u\rightarrow{0^+}}uf\left(e_1+\sum_{k=2}^{n}I_{\phi_{k-1}}\left(\frac{x}{u^{1/s}}\right)e_k\right)
	&\geq\liminf_{u\rightarrow{0^+}}f\left(ue_1+\sum_{k=2}^{n}uI_{\phi_{k-1}}\left(\frac{x}{u^{1/s}}\right)e_k\right)\\
	=\liminf_{t\rightarrow\infty}f\left(\frac{e_1}{t}+\sum_{k=2}^{n}\frac{1}{t}I_{\phi_{k-1}}\left(t^{1/s}x\right)e_k\right)
	&\geq\liminf_{t\rightarrow\infty}f\left(\frac{e_1}{t}+\frac{1}{t}I_{\phi_{1}}\left(t^{1/s}x\right)e_2\right).	
	\end{align*}
	Next, by \eqref{N-funtion} and by Fatou's lemma (see \cite{Royd}) we conclude
	\begin{equation*}
	\liminf_{t\rightarrow\infty}\frac{1}{t}I_{\phi_{1}}\left(t^{1/s}x\right)\geq	\int_{\supp(x)}\liminf_{t\rightarrow\infty}\frac{\phi_{j}(t^{1/s}|x|)}{t}d\mu=\infty.
	\end{equation*}
	Hence, since $x\in{X_\rho}\setminus{0}$, by continuity of $f$ we have
	\begin{align*}
	\liminf_{u\rightarrow{0^+}}uf\left(e_1+\sum_{k=2}^{n}I_{\phi_{k-1}}\left(\frac{x}{u^{1/s}}\right)e_k\right)
	&\geq\liminf_{t\rightarrow\infty}f\left(\frac{e_1}{t}+\frac{1}{t}I_{\phi_{2}}\left(t^{1/s}x\right)e_k\right)\\
	&=\infty>\norm{x}{f}{}. 
	\end{align*}	
	Now, fix $x\in{X_\rho}\setminus\{0\}$ and $a>0$ such that $\rho(ax)<\infty$. Then, by the Lebesgue Dominated Convergence Theorem for $i\in\{1,\dots,n-1\}$ the functions 
	\begin{equation*}
	[0,a]\ni{u}\rightarrow\rho_i(ux)
	\end{equation*}
	are continuous and consequently the mapping 
	\begin{equation*}
	[0,a]\ni{u}\rightarrow\rho(ux)
	\end{equation*}
	is also continuous, which proves our claim.
\end{remark}

\begin{remark}
	Let $s\in(0,1]$ and let $f$ and $\phi_i$ for any $i\in\{1,\dots,n-1\}$ be functions as in Remark \ref{rem:regular:Orlicz}. Assume that $G:L^0\rightarrow\mathbb{R}$ be any operator such that
	\begin{equation*}
	G(ax+by)\leq{a}G(x)+bG(y)\quad\textnormal{ and }\quad{G(ax)=aG(x)}
	\end{equation*}
	for any $a,b\in[0,\infty)$ and $x,y\in{L^0}$. Define for any $i\in\{1,\dots,n-1\}$ and $x\in{L^0}$,
	\begin{equation*}
	\quad\rho_i(x)=I_{\phi_i}(|G(x)|)\quad\textnormal{ and }\quad\rho=\max_{1\leq i\leq{n-1}}\{\rho_i\}
	\end{equation*}	
	for any $x\in{L^0}$. Observe that $\rho_i$ is an $s$-convex semimodular for any $i\in\{1,\dots,n-1\}$. Then, in view of \eqref{N-funtion}, proceeding analogously as in the previous remark we may observe that $(X_\rho,\norm{\cdot}{f}{})$ is regular.
	The typical example of an operator $G$ is the Ces\`{a}ro operator and the maximal function of the decreasing rearrangement. This gives us examples of some natural couples which are regular. 
\end{remark}

The next result give us some information of the space $(X_{\rho, f})^*$ of all linear and continuous with respect to the $\| \cdot \|_f$ functional on $ X_{\rho}.$ Let $\rho$ be a convex semimodular defined on $X$ and let $X'$ denote the space of all linear functionals
defined on $X.$ By (\cite{Mus}, Th.2.3, p. 8), a function $ \rho^* : X' \rightarrow [0, +\infty]$ defined for $ x' \in X'$ by 
$$
\rho^*(x') = \sup \{ |x'(x)| - \rho(x): x \in X_{\rho} \}
$$
is a convex, left-continuous semimodular.

\begin{theorem}
\label{dual1}
 Let $ X$ be a linear space over $\mathbb{K},$ $ \mathbb{K} = \mathbb{R}$ or   $ \mathbb{K} = \mathbb{C}.$ Fix $ n \geq 2$ and let $ \rho_1,...,\rho_{n-1}$ be convex semimodulars defined on $X.$ Put $ \rho = \max \{  \rho_1,...,\rho_{n-1}\}.$
 Assume that $ f: \mathbb{R}^n \rightarrow [0, +\infty)$ is a norm on $ \mathbb{R}^n$ satisfying (\ref{crucial}). Then $((X_{\rho, f})^* = X'_{\rho^*},$ where 
 $$
 X'_{\rho^*}= \{ x' \in X': \rho^*(dx') < \infty \hbox{ for some } d > 0 \}.
$$
\end{theorem}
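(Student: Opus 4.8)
The plan is to reduce the statement to the special case where $f$ is the maximum norm on $\mathbb{R}^n$ — for which $\|\cdot\|_f$ is the classical Luxemburg norm — and then to characterise $\|\cdot\|_f$-continuity of a linear functional through finiteness of the conjugate semimodular $\rho^*$ by means of the Young inequality.

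First I would record that $\rho=\max_{1\le i\le n-1}\{\rho_i\}$ is itself a convex semimodular: axioms $(a)$ and $(b)$ are inherited coordinatewise, while $(c)$ follows from $\rho(ax+by)=\max_i\rho_i(ax+by)\le a\max_i\rho_i(x)+b\max_i\rho_i(y)$. Taking $f_0$ to be the maximum norm on $\mathbb{R}^n$, which is monotone and hence satisfies \eqref{crucial}, one computes exactly as in Remark \ref{Luxemburg} that
\[
\|x\|_{f_0}=\inf_{k>0}\{k\max(1,\rho(x/k))\}=\inf\{u>0:\rho(x/u)\le 1\}=:\|x\|_L,
\]
where the middle identity uses $\rho(x/u)\le1$ for $u>\|x\|_L$ together with the fact that when $\rho(x/k)>1$ convexity gives $\rho\bigl(x/(k\rho(x/k))\bigr)\le1$, so $k\rho(x/k)\ge\|x\|_L$. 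By Theorem \ref{equivalent} the norms $\|\cdot\|_f$ and $\|\cdot\|_{f_0}=\|\cdot\|_L$ are equivalent and therefore induce the same topology, hence the same space of continuous linear functionals; thus it suffices to prove $(X_{\rho,L})^*=X'_{\rho^*}$.

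The two elementary ingredients are the Young inequality, immediate from the definition of $\rho^*$,
\[
|x'(x)|\le\rho(x)+\rho^*(x')\qquad(x\in X_\rho,\ x'\in X'),
\]
and the one-sided modular–norm comparisons coming from Lemma \ref{lem1}: for $u>\|x\|_L$ one has $\rho(x/u)\le1$, while if $\|x\|_L>1$ then $\rho(x)\ge\|x\|_L$ (apply Lemma \ref{lem1} to $\rho(x)=\rho(\mu\cdot x/\mu)\ge\mu\rho(x/\mu)>\mu$ for $1<\mu<\|x\|_L$ and let $\mu\to\|x\|_L^-$). For the inclusion $X'_{\rho^*}\subseteq(X_{\rho,L})^*$, take $x'$ with $\rho^*(dx')<\infty$ for some $d>0$; applying Young to the functional $dx'$ and to the argument $x/u$ with $u>\|x\|_L$ gives $d|x'(x)|/u\le\rho(x/u)+\rho^*(dx')\le 1+\rho^*(dx')$, whence, letting $u\to\|x\|_L^+$, one obtains $|x'(x)|\le d^{-1}(1+\rho^*(dx'))\,\|x\|_L$, so $x'$ is $\|\cdot\|_L$-continuous.

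For the reverse inclusion let $x'\in(X_{\rho,L})^*$, so that $|x'(x)|\le C\|x\|_L$ for some $C>0$, and fix $d$ with $0<d\le 1/C$. Then
\[
\rho^*(dx')=\sup_{x\in X_\rho}\{d|x'(x)|-\rho(x)\}\le\sup_{x\in X_\rho}\{dC\|x\|_L-\rho(x)\},
\]
and I split the supremum according to the size of $\|x\|_L$: if $\|x\|_L\le1$ then $dC\|x\|_L-\rho(x)\le dC$, while if $\|x\|_L>1$ the comparison $\rho(x)\ge\|x\|_L$ gives $dC\|x\|_L-\rho(x)\le(dC-1)\|x\|_L\le0$. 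Hence $\rho^*(dx')\le dC<\infty$, i.e. $x'\in X'_{\rho^*}$. The identification $(X_{\rho,f})^*=X'_{\rho^*}$ is to be read through restriction of functionals to $X_\rho$, on which $\rho^*$ alone depends. I expect the main obstacle to be the bookkeeping in this reverse inclusion: one must choose $d$ small relative to the operator norm $C$ and exploit the two one-sided modular–norm comparisons simultaneously, which is exactly where Lemma \ref{lem1} and the convexity of $\rho$ are indispensable.
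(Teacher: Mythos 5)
Your proposal is correct, and it reaches the result by a route that is genuinely more self-contained than the paper's. The first reduction is identical: both you and the authors invoke Theorem \ref{equivalent} to replace $\|\cdot\|_f$ by the Luxemburg norm $\|\cdot\|_L$ associated with $\rho=\max_i\rho_i$ (you additionally verify, for general $n$, the identity $\inf_k\{k\max(1,\rho(x/k))\}=\|x\|_L$ that the paper only records in Remark \ref{Luxemburg} for $n=2$ and then uses tacitly). After that the proofs diverge. The paper pivots on a cited criterion (Musielak, Th.~2.1): $x'\in(X_{\rho,f})^*$ if and only if $|x'(x)|\le D(\rho(x)+1)$ for some $D>1$; it then converts this growth condition into finiteness of $\rho^*(x'/D)$ via the convexity estimate $D\rho(x/D)\le\rho(x)$, and conversely gets the growth condition from $\rho^*(dx')<\infty$ by Young's inequality. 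You bypass the citation entirely: you prove the equivalence of $\|\cdot\|_L$-boundedness and $\rho^*$-finiteness directly, using Young's inequality together with the two modular--norm comparisons $\rho(x/u)\le 1$ for $u>\|x\|_L$ and $\rho(x)\ge\|x\|_L$ when $\|x\|_L>1$, both legitimate consequences of convexity and Lemma \ref{lem1}, and your case split $\|x\|_L\lessgtr 1$ with $d\le 1/C$ in the reverse inclusion is sound. In effect you re-prove the relevant content of Musielak's theorem in the Luxemburg setting: the paper's argument is shorter but leans on the external reference, while yours is longer but elementary and complete; you are also more careful than the paper on one small point, namely that the identification $(X_{\rho,f})^*=X'_{\rho^*}$ must be read through restriction of functionals to $X_\rho$, on which $\rho^*$ alone depends.
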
 
\begin{proof}
By Theorem \ref{equivalent}, we can assume that $\| \cdot \|_f$ is equal to the Luxemburg norm on $X_{\rho}.$ By  (\cite{Mus}, Th.2.1, p. 7), $ x' \in (X_{\rho, f})^* $ if and only if there exists $ D >1 $ such that 
$$
|x'(x)| \leq D(\rho(x)+1)
$$
for any $ x \in X_{\rho}.$ Now assume that $ x' \in (X_{\rho, f})^* .$ We show that $ \rho^*(x'/D) < \infty.$
Observe that for any $ x \in X_{\rho},$ by convexity of $ \rho,$ 
$$
|(x'/D)(x)| - \rho(x) \leq D (\rho(x/D)+1) - \rho(x) \leq \rho(x) + D - \rho(x) = D.
$$
Taking supremum over $x \in X_{\rho},$ we get our claim. 
Now assume that $ x' \in X'_{\rho^*}.$ Then  $\rho^*(dx') < \infty$ for some $d > 0.$ Put $D = \rho^*(dx').$ Without loss of generality, we can assume that $D >1.$ Observe that for any $ x \in X_{\rho},$
$$
|dx'(x)|  \leq D + \rho(x) \leq D(\rho(x) +1),
$$ 
which shows that $ dx' \in ((X_{\rho, f})^*   $ and consequently $ x' \in ((X_{\rho, f})^*, $ as required.
\end{proof}
In the next result we estimate the norm in $(X_{\rho, f})^* $ under assumptions that $n=2$ and $f$ is a norm on $\mathbb{R}^n$ satisfying (\ref{crucial}).
\begin{theorem}
\label{dual2}
Let $ X$ be a linear space over $\mathbb{K},$ $ \mathbb{K} = \mathbb{R}$ or   $ \mathbb{K} = \mathbb{C}.$ Let $ \rho$ be a convex semimodular defined on $X.$ 
Assume that $ f: \mathbb{R}^2 \rightarrow [0, +\infty)$ is a norm on $ \mathbb{R}^2$ satisfying (\ref{crucial}) and $f(1,u)=f(u,1)$ for all $u>0$. Then, for any $x^* \in (X_{\rho, f})^*$
$$
\|x^*\|_{f, \rho} = sup \{ |x^*(x)|: x \in X_{\rho}, \|x\|_f =1\} \leq \|x^*\|_{f^*}, 
$$
where 
$$
\|x^*\|_{f^*} = \inf \{k f^*(1, \rho^*(x^*/k)) :k>0 \}
$$
and $ f^*$ is the dual norm to $f$ defined on $ \mathbb{R}^2.$
\end{theorem}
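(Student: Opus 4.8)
The plan is to establish the bound through two nested Young-type inequalities: one coming from the dual semimodular $\rho^*$, and one coming from the dual norm $f^*$ on $\mathbb{R}^2$. Throughout, since $\rho$ is convex and $f$ is a norm we have $s=1$, so by Theorem \ref{increasing1} the norm reads $\|x\|_f = \inf_{k>0}\{kf(1,\rho(x/k))\}$, and likewise the quantity $\|x^*\|_{f^*} = \inf_{l>0}\{lf^*(1,\rho^*(x^*/l))\}$ is the analogous expression with the pair $(f,\rho)$ replaced by $(f^*,\rho^*)$; recall that $\rho^*$ is a convex semimodular. If $\|x^*\|_{f^*}=\infty$ there is nothing to prove, so one may assume it is finite, which is consistent with $x^* \in (X_{\rho,f})^* = X'_{\rho^*}$ by Theorem \ref{dual1}.

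First I would fix $x\in X_\rho$ with $\|x\|_f=1$ and arbitrary $k,l>0$ for which $A:=\rho(x/k)$ and $B:=\rho^*(x^*/l)$ are finite; values of $k$ or $l$ for which the respective modular is infinite contribute $+\infty$ to the relevant infimum and may be discarded. Applying the defining inequality of $\rho^*$, namely $|x'(x)|\le \rho(x)+\rho^*(x')$, to the functional $x^*/l$ evaluated at $x/k$ yields
$$\frac{1}{kl}|x^*(x)| = \left|\frac{x^*}{l}\left(\frac{x}{k}\right)\right| \le \rho\left(\frac{x}{k}\right) + \rho^*\left(\frac{x^*}{l}\right) = A + B,$$
so that $|x^*(x)| \le kl(A+B)$.

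The second, and decisive, step is to control $A+B$ by the product $f(1,A)f^*(1,B)$. Here I would use the Young inequality for the dual pair of norms, $\langle u,v\rangle \le f(u)f^*(v)$ for $u,v\in\mathbb{R}^2$, applied to $u=(A,1)$ and $v=(1,B)$; this gives $A+B = \langle u,v\rangle \le f(A,1)f^*(1,B)$. At this point the hypothesis $f(u,1)=f(1,u)$ is exactly what is needed: it lets me rewrite $f(A,1)=f(1,A)$ (extending to $A=0$ by continuity of the norm $f$), so that $A+B \le f(1,A)f^*(1,B)$. Combining with the previous display gives, for all admissible $k,l$,
$$|x^*(x)| \le kl(A+B) \le \big[kf(1,A)\big]\big[lf^*(1,B)\big] = \big[kf(1,\rho(x/k))\big]\big[lf^*(1,\rho^*(x^*/l))\big].$$

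Finally, since the right-hand side factors as a product of a nonnegative function of $k$ alone and a nonnegative function of $l$ alone, the infimum of the product over $(k,l)$ equals the product of the separate infima; taking these infima and using $\inf_{k>0}kf(1,\rho(x/k))=\|x\|_f=1$ and $\inf_{l>0}lf^*(1,\rho^*(x^*/l))=\|x^*\|_{f^*}$ yields $|x^*(x)| \le \|x^*\|_{f^*}$. Taking the supremum over all $x\in X_\rho$ with $\|x\|_f=1$ gives $\|x^*\|_{f,\rho}\le \|x^*\|_{f^*}$, as claimed. I expect the main subtlety to be the correct pairing of coordinates in the norm Young inequality, that is, choosing $u=(A,1)$ rather than $(1,A)$ so that the inner product produces $A+B$, together with the bookkeeping that makes the symmetry hypothesis $f(u,1)=f(1,u)$ do precisely the required work; the infimum-factorization and the discarding of infinite-modular values are routine once the two Young inequalities are in place.
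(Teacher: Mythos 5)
Your proof is correct, and it uses the same two ingredients as the paper's own argument: the modular Young inequality $|x'(y)|\le\rho(y)+\rho^*(x')$ coming from the definition of $\rho^*$, and the dual-norm Young inequality $\langle u,v\rangle\le f(u)f^*(v)$ on $\mathbb{R}^2$, with the symmetry hypothesis $f(1,u)=f(u,1)$ invoked exactly once to align coordinates with the definitions of $\|\cdot\|_f$ and $\|\cdot\|_{f^*}$. The genuine difference is that you scale $x$ and $x^*$ by two \emph{independent} parameters $k$ and $l$, whereas the paper uses a single shared parameter: it writes $|x^*(x)|=k^2|(x^*/k)(x/k)|\le k^2\bigl(\rho^*(x^*/k)+\rho(x/k)\bigr)\le \bigl[kf^*(\rho^*(x^*/k),1)\bigr]\bigl[kf(1,\rho(x/k))\bigr]$ (the paper's display has a typo, $\rho^*(x/k)$ for $\rho^*(x^*/k)$) and then ``takes the infimum over $k$.'' Your decoupling is not cosmetic: for nonnegative functions of one variable, $\inf_k g(k)h(k)\ge(\inf_k g)(\inf_k h)$, with the inequality pointing the wrong way, so the paper's final step --- bounding a shared-parameter infimum by the product $\|x^*\|_{f^*}\|x\|_f$ of two separately optimized infima --- does not follow as written, since the minimizing values of $k$ for the two factors need not coincide. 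With independent $k$ and $l$, the bound $|x^*(x)|\le\bigl[kf(1,\rho(x/k))\bigr]\bigl[lf^*(1,\rho^*(x^*/l))\bigr]$ holds for every admissible pair, the infimum over $(k,l)$ factors into the product of the separate infima, and the conclusion follows cleanly. In short, you follow the paper's strategy but your two-parameter bookkeeping repairs a real gap in its concluding infimum step.
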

\begin{proof}
Fix $ x \in X_{\rho}$ and   $ x^* \in (X_{\rho,f})^*.$ By Theorem \ref{dual1}, $ x^* \in X'_{\rho^*}.$ Hence we can find $ k>0$ such that $ \rho(x/k) < \infty$ and $ \rho^*(x^*/k) < \infty.$
Observe that 
\begin{align*}
|x^*(x)| = k^2|(x^*/k)(x/k)| &\leq k^2(\rho^*(x^*/k) + \rho(x/k))\\
& \leq k f^*(\rho^*(x/k),1) k f(1,\rho(x/k)).
\end{align*}
Taking infimum over $ k >0,$ we get that 
$$
|x^*(x)| \leq \|x^*\|_{f^*} \|x\|_f,
$$
which shows our claim.
\end{proof}

\begin{corollary}
	Let $X$ be a linear space over $\mathbb{K},$ $ \mathbb{K} = \mathbb{R}$ or   $ \mathbb{K} = \mathbb{C}.$ Let $ \rho$ be a convex semimodular defined on $X.$ Assume that $ f: \mathbb{R}^2 \rightarrow [0, +\infty)$ is a norm on $ \mathbb{R}^2$ satisfying (\ref{crucial}). Then, the norms $\norm{\cdot}{f,\rho}{}$, $\norm{\cdot}{f^*,\rho^*}{}$ and $\norm{\cdot}{\rho^*}{}$ are equivalent, where $f^*$ is the dual norm to $f$ defined on $\mathbb{R}^2$ and
	$$
	\|x^*\|_{f^*, \rho^*} =\inf_{k>0} \{ uf^*\left(1,\rho^*(x^*/u)\right)\}\quad\textnormal{ and }\quad\|x^*\|_{\rho^*} = \inf \{k>0 : \rho^*(x^*/k)\leq{1} \}
	$$
	for any $x^*\in(X_{\rho, f})^*$.
\end{corollary}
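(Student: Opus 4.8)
The three functionals in the statement all live on one and the same vector space, namely $(X_{\rho,f})^{*}=X'_{\rho^{*}}$ by Theorem \ref{dual1}. The plan is to recognise two of them as instances of the construction of Theorem \ref{increasing} built over the semimodular $\rho^{*}$ rather than $\rho$, and then to close a chain of two inequalities. The enabling observation is that, by the result of \cite{Mus} recalled just before Theorem \ref{dual1}, $\rho^{*}$ is itself a convex, left-continuous semimodular on $X'$, so Theorems \ref{increasing}, \ref{increasing1} and \ref{equivalent} apply verbatim with $\rho^{*}$ in place of $\rho$.

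First I would dispose of the equivalence $\norm{\cdot}{f^{*},\rho^{*}}{}\asymp\norm{\cdot}{\rho^{*}}{}$. The functional $\norm{x^{*}}{f^{*},\rho^{*}}{}=\inf_{u>0}\{uf^{*}(1,\rho^{*}(x^{*}/u))\}$ is precisely the norm produced by Theorem \ref{increasing1} from the semimodular $\rho^{*}$ and the two-dimensional norm $f^{*}$, while $\norm{x^{*}}{\rho^{*}}{}=\inf\{k>0:\rho^{*}(x^{*}/k)\le 1\}$ is, by Remark \ref{Luxemburg}, the same construction with $f^{*}$ replaced by the maximum norm on $\mathbb{R}^{2}$. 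As $f$ is a monotone norm, its dual $f^{*}$ is again a monotone norm and in particular satisfies \eqref{crucial}, and so does the maximum norm; hence Theorem \ref{equivalent}, applied over $\rho^{*}$, gives constants $c_{1},c_{2}>0$ with $c_{1}\norm{x^{*}}{\rho^{*}}{}\le\norm{x^{*}}{f^{*},\rho^{*}}{}\le c_{2}\norm{x^{*}}{\rho^{*}}{}$ for all $x^{*}$.

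One inequality between the remaining pair is already in hand: Theorem \ref{dual2} yields $\norm{x^{*}}{f,\rho}{}\le\norm{x^{*}}{f^{*},\rho^{*}}{}$ (the functional written $\norm{\cdot}{f^{*}}{}$ there is literally $\norm{\cdot}{f^{*},\rho^{*}}{}$). It therefore remains only to bound $\norm{\cdot}{f^{*},\rho^{*}}{}$ above by a multiple of the operator norm $\norm{\cdot}{f,\rho}{}$, and this reverse estimate is the heart of the corollary. I would obtain it by a completeness argument. Since $\rho^{*}$ is convex and left-continuous, the modular space $(X'_{\rho^{*}},\norm{\cdot}{\rho^{*}}{})$ is complete by \cite{Mus}, hence so is $(X'_{\rho^{*}},\norm{\cdot}{f^{*},\rho^{*}}{})$ by the previous paragraph; on the other hand $(X_{\rho,f})^{*}=(X'_{\rho^{*}},\norm{\cdot}{f,\rho}{})$ is complete as a continuous dual. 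The identity map from $(X'_{\rho^{*}},\norm{\cdot}{f^{*},\rho^{*}}{})$ onto $(X'_{\rho^{*}},\norm{\cdot}{f,\rho}{})$ is a linear bijection, and it is bounded by the inequality of Theorem \ref{dual2}; the bounded inverse theorem then forces its inverse to be bounded, i.e. there is $c_{3}>0$ with $\norm{x^{*}}{f^{*},\rho^{*}}{}\le c_{3}\norm{x^{*}}{f,\rho}{}$. Combining this with Theorem \ref{dual2} and the first paragraph delivers the three-fold equivalence.

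The step I expect to be the genuine obstacle is exactly this reverse bound, which is the nontrivial half of the modular duality between a Luxemburg-type norm over $\rho$ and an Amemiya-type norm over $\rho^{*}$. If one prefers an effective, constant-tracking argument to the open mapping theorem, it can be replaced by the classical duality of \cite{Mus}: applying Theorem \ref{equivalent} over $\rho$ shows that $\norm{\cdot}{f}{}$ is equivalent to the Luxemburg norm on $X_{\rho}$, so $\norm{\cdot}{f,\rho}{}$ is equivalent to the norm dual to the Luxemburg norm, which by \cite{Mus} is the Amemiya--Orlicz norm over $\rho^{*}$, itself equivalent to $\norm{\cdot}{\rho^{*}}{}$ by a further application of Theorem \ref{equivalent}. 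Either route closes the argument; the only routine verifications left over are that $f^{*}$ inherits \eqref{crucial} and that the two completeness facts are indeed in force.
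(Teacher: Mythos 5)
Your opening step is fine and coincides with the paper's: you recognize $\norm{\cdot}{f^*,\rho^*}{}$ and $\norm{\cdot}{\rho^*}{}$ as two instances of the construction of Theorem \ref{increasing1} built over the conjugate semimodular $\rho^*$ (the latter via Remark \ref{Luxemburg}) and apply Theorem \ref{equivalent}; the paper does the same thing except that it compares $f^*$ with the $\ell_1$-norm $A(u)=|u_1|+|u_2|$ and quotes Theorem 1.10 of \cite{Mus} for the Amemiya--Luxemburg equivalence, rather than comparing with the maximum norm. (Two shared wrinkles: neither you nor the paper checks that $f^*$ satisfies \eqref{crucial} --- harmless, because the proof of Theorem \ref{equivalent} only uses equivalence of norms on $\mathbb{R}^n$ --- and your assertion that ``the dual of a monotone norm is monotone'' starts from a hypothesis the corollary does not make, since $f$ is only assumed to satisfy \eqref{crucial}, not to be monotone.)

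The genuine gap is in your main route for the remaining equivalence. You invoke Theorem \ref{dual2} for the inequality $\norm{x^*}{f,\rho}{}\leq\norm{x^*}{f^*,\rho^*}{}$, but Theorem \ref{dual2} carries the additional hypothesis $f(1,u)=f(u,1)$ for all $u>0$, which the corollary does not assume. That symmetry is not decorative: the Young-type inequality in the proof of Theorem \ref{dual2} produces the factor $f^*\bigl(\rho^*(x^*/k),1\bigr)$, and the symmetry is exactly what converts it into $f^*\bigl(1,\rho^*(x^*/k)\bigr)$. The step is repairable (without symmetry one gets the bound with coordinates swapped, and the swapped functional is equivalent to $\norm{\cdot}{f^*,\rho^*}{}$ by one more application of Theorem \ref{equivalent}), but as written it is not justified. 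Your open-mapping argument additionally leans on the completeness of $(X'_{\rho^*},\norm{\cdot}{\rho^*}{})$ cited loosely to \cite{Mus}; this is true but requires a Fatou-type lower semicontinuity argument for $\rho^*$, and the paper never needs it. The paper's actual proof is precisely your final-paragraph fallback: reduce $\norm{\cdot}{f}{}$ to the Luxemburg norm by Theorem \ref{equivalent}, quote Theorem 2.5 of \cite{Mus} to get $\norm{x^*}{\rho^*}{}\leq\norm{x^*}{f,\rho}{}\leq 2\norm{x^*}{\rho^*}{}$, and then handle $\norm{\cdot}{f^*,\rho^*}{}$ by the finite-dimensional equivalence $f^*\asymp A$ together with Theorem 1.10 of \cite{Mus}. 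So you do reach a correct argument, but it is the one you relegated to a remark; the route you actually develop has a real, if fixable, hole.
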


\begin{proof}
	Immediately, by Theorem 2.5 in \cite{Mus} it follows that 
	\begin{equation}\label{norm:equival:1}
	\norm{x^*}{\rho^*}{}\leq\norm{x^*}{f,\rho}{}\leq{2}\norm{x^*}{\rho^*}{}
	\end{equation}
	for any $x^*\in{(X_{\rho,f})^*}$. Next, since all norm on $\mathbb{R}^2$ are equivalent, we infer that $f^*$ is equivalent to $A$, where $A(u)=|u_1|+|u_2|$ for any $u\in\mathbb{R}^2$. Hence, by Theorem \ref{equivalent} we conclude that $\norm{\cdot}{f^*,\rho^*}{}$ and $\norm{\cdot}{A,\rho^*}{}$ are equivalent, i.e. there exist $M_1,M_2>0$ such that for any $x^* \in (X_{\rho, f})^*$ we have
	\begin{equation}\label{norm:equival:2}
	M_1\norm{x^*}{f^*,\rho^*}{}\leq\norm{x^*}{A,\rho^*}{}\leq{M_2}\norm{x^*}{f^*,\rho^*}{}.
	\end{equation}
	Moreover, by Theorem 1.10 in \cite{Mus} we obtain that 
	\begin{equation*}
	\norm{x^*}{\rho^*}{}\leq\norm{x^*}{A,\rho^*}{}\leq{2}\norm{x^*}{\rho^*}{}
	\end{equation*}
	 for any $x^* \in (X_{\rho, f})^*$. In consequence, by \eqref{norm:equival:1} and \eqref{norm:equival:2} we complete the proof.
\end{proof}

\section{order continuity and the fatou property of banach function space $X_\rho$}

Immediately, by Theorem \ref{equivalent} we get the following result.

\begin{corollary}\label{equival:f&A}
  Let $X$ be a linear space over $ \mathbb{K},$ $\mathbb{K} = \mathbb{R}$ or   $ \mathbb{K} = \mathbb{C}.$ Let $ n \geq 2$, $s\in(0,1]$ and $ \rho_1,...,\rho_{n-1}$ be $s$-convex semimodulars defined on $X$ and $\rho = \max \{  \rho_1,...,\rho_{n-1}\}.$
 Assume that $ f: \mathbb{R}^n \rightarrow [0, +\infty)$ is a norm on $ \mathbb{R}^n$ satisfying (\ref{crucial}). Let $A:\mathbb{R}^n\rightarrow[0,\infty)$ be a functional given by $A(x)=\abs{x_1}{}{}+\max_{2{\leq}i\leq{n}}\abs{x_i}{}{}$ for any $x\in\mathbb{R}^n$. Then, the norms $\norm{\cdot}{f}{}$ and $\norm{\cdot}{A}{}$ are equivalent.
\end{corollary}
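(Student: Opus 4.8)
The plan is to read the corollary as a direct application of Theorem \ref{equivalent} with the second norm chosen to be $g=A$. Since Theorem \ref{equivalent} already delivers constants $m,M>0$ with $m\norm{x}{f}{}\leq\norm{x}{A}{}\leq{M}\norm{x}{f}{}$ for every $x\in{X_\rho}$ as soon as both $f$ and $A$ are norms on $\mathbb{R}^n$ meeting the hypotheses of Theorem \ref{increasing}, the only work to be done is to confirm that $A(x)=\abs{x_1}{}{}+\max_{2\leq i\leq{n}}\abs{x_i}{}{}$ is such a norm. In the case $s=1$ this two-sided estimate is exactly the asserted equivalence, and for general $s\in(0,1]$ it is the appropriate notion of equivalence of $s$-norms; so everything reduces to verifying the hypotheses on $A$.

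First I would check that $A$ is genuinely a norm on $\mathbb{R}^n$. Nonnegativity is immediate, and $A(x)=0$ forces both $\abs{x_1}{}{}=0$ and $\max_{2\leq i\leq n}\abs{x_i}{}{}=0$, hence $x=0$, while $A(0)=0$; absolute homogeneity follows by factoring $\abs{\lambda}{}{}$ out of each summand; and the triangle inequality is obtained from the triangle inequality for $\abs{\cdot}{}{}$ together with the elementary bound $\max_{2\leq i\leq n}(\abs{x_i}{}{}+\abs{y_i}{}{})\leq\max_{2\leq i\leq n}\abs{x_i}{}{}+\max_{2\leq i\leq n}\abs{y_i}{}{}$. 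These verifications are routine and I would not belabour them.

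Next I would verify condition (\ref{crucial}). The cleanest route is to observe that $A$ is a \emph{monotone} norm: if $\abs{x_i}{}{}\leq\abs{y_i}{}{}$ for all $i$, then $\abs{x_1}{}{}\leq\abs{y_1}{}{}$ and $\max_{2\leq i\leq n}\abs{x_i}{}{}\leq\max_{2\leq i\leq n}\abs{y_i}{}{}$, whence $A(x)\leq A(y)$; by the observation already used in the proof of Corollary \ref{monotone}, every monotone norm satisfies (\ref{crucial}). Alternatively one checks (\ref{crucial}) directly: for $x=(1,x_2,\dots,x_n)$ and $y=(1,y_2,\dots,y_n)$ in $(\mathbb{R}_+)^n$ with $x_j\leq y_j$, one has $A(x)=1+\max_{2\leq i\leq n}x_i\leq 1+\max_{2\leq i\leq n}y_i=A(y)$.

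With $f$ and $A$ both confirmed to satisfy the requirements of Theorem \ref{increasing}, the equivalence of $\norm{\cdot}{f}{}$ and $\norm{\cdot}{A}{}$ follows at once from Theorem \ref{equivalent}. I do not expect a genuine obstacle here: the result is essentially a formal specialization of the already-established equivalence theorem, and the only mild subtlety is making sure that the functional $A$, which blends an $\ell_1$-type weight on the first coordinate with a sup-type weight on the remaining coordinates, really is a norm satisfying (\ref{crucial}) — once that is in hand the corollary is immediate.
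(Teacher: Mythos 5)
Your proof is correct and takes essentially the same route as the paper: the paper states this corollary with the single remark that it follows ``immediately, by Theorem \ref{equivalent}.'' Your verification that $A$ is a (monotone) norm on $\mathbb{R}^n$ satisfying \eqref{crucial}, followed by the invocation of Theorem \ref{equivalent} with $g=A$, simply fills in the routine details the paper leaves implicit.
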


\begin{proposition}\label{proposistion:BFS}
	Let $X$ be a linear subspace of $L^0$ and $s\in(0,1]$, $n\geq{2}$ and $\rho_1,\dots,\rho_{n-1}$ be $s$-convex semimodulars on $X$. Assume that $f:\mathbb{R}^n\rightarrow[0,\infty)$ is a convex function satisfying \eqref{crucial} and $\rho=\max_{1\leq i\leq{n-1}}\{\rho_i\}$. Let $X_\rho=\{x\in{X}:\rho(\lambda{x})<\infty\textnormal{ for some }\lambda\}$ and
	\begin{itemize}
		\item[$(i)$] For any $x\in X_\rho$ and $y\in{L^0}$ such that $\abs{y}{}{}\leq\abs{x}{}{}$ a.e., we have $y\in{X_\rho}$ and $\rho_i(\lambda y)\leq\rho_i(\lambda x)$ for all $\lambda>0$ and $i\in\{1,\dots,n-1\}$.
		\item[$(ii)$] There is $x\in{X_\rho}$ such that $x>0$ a.e.
		\item[$(iii)$] For any $(x_m)\subset{X_\rho}$ such that $\rho(\lambda(x_k-x_j))\rightarrow{0}$ as $k,j\rightarrow\infty$ for any $\lambda>0$, then there is $x\in{X_\rho}$ and $\rho(\lambda(x_m-x))\rightarrow{0}$ as $n\rightarrow\infty$ for any $\lambda>0$. 
	\end{itemize}
	
	If $(i)-(iii)$ are satisfied, then $X_\rho$ is an $s$-Banach function space equipped with the $s$-norm given by
	\begin{equation*}
	\norm{x}{f}{}=\inf_{k>0}\left\{kf\left(e_1+\sum_{i=2}^n\rho_{i-1}\left(\frac{x}{k^{1/s}}\right)e_i\right)\right\}
	\end{equation*} 
	for any $x\in{X_\rho}$. Additionally, $(x_m)\subset{X_\rho}$ is a Cauchy sequence with respect to $\norm{\cdot}{f}{}$ if and only if for any $\lambda>0$ we have $\rho(\lambda(x_k-x_j))\rightarrow{0}$ as $k,j\rightarrow\infty$. 
\end{proposition}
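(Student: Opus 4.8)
The plan is to verify the four defining properties of an $s$-Banach function space one after another, reducing the whole argument to a single convergence lemma tying the $s$-norm $\norm{\cdot}{f}{}$ to the modular $\rho$. That $\norm{\cdot}{f}{}$ is an $s$-norm on $X_\rho$ is precisely Theorem \ref{increasing}, and the existence of a strictly positive element is exactly hypothesis $(ii)$. For the ideal (solidity) property I would argue as follows: if $x\in{L^0}$, $y\in{X_\rho}$ and $\abs{x}{}{}\leq\abs{y}{}{}$ a.e., then $(i)$ gives $x\in{X_\rho}$ together with $\rho_{i-1}(x/k^{1/s})\leq\rho_{i-1}(y/k^{1/s})$ for every $k>0$ and every $i$; substituting these into the coordinates $2,\dots,n$ and using the monotonicity \eqref{crucial} of $f$ yields $kf(e_1+\sum_{i=2}^n\rho_{i-1}(x/k^{1/s})e_i)\leq kf(e_1+\sum_{i=2}^n\rho_{i-1}(y/k^{1/s})e_i)$ for all $k$, so taking the infimum over $k>0$ gives $\norm{x}{f}{}\leq\norm{y}{f}{}$. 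Linearity of $X_\rho$ and its being a sublattice of $L^0$ follow routinely from the $s$-convexity of the $\rho_i$ and from $(i)$.

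The core of the proof is the equivalence of Cauchy conditions, which I would derive from the lemma: for any $(z_m)\subset{X_\rho}$ one has $\norm{z_m}{f}{}\to 0$ if and only if $\rho(\lambda z_m)\to 0$ for every $\lambda>0$. The easy implication is obtained by testing the infimum at $k=\lambda^{-s}$, which gives $\norm{z_m}{f}{}\leq\lambda^{-s}f(e_1+\sum_{i=2}^n\rho_{i-1}(\lambda z_m)e_i)$; since $\rho_{i-1}(\lambda z_m)\leq\rho(\lambda z_m)\to 0$ and $f$, being finite and convex on $\mathbb{R}^n$, is continuous, the right-hand side tends to $\lambda^{-s}f(e_1)$, so $\limsup_m\norm{z_m}{f}{}\leq\lambda^{-s}f(e_1)$, and letting $\lambda\to\infty$ forces $\norm{z_m}{f}{}\to 0$.

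The \emph{main obstacle} is the reverse implication. Assume $\norm{z_m}{f}{}\to 0$ but, for some $\lambda>0$ and after passing to subsequences, $\rho_{j_0-1}(\lambda z_m)\geq c>0$ for a fixed coordinate $j_0\in\{2,\dots,n\}$ (possible since $\rho=\max_i\rho_i$ and there are finitely many indices). Choose $k_m>0$ nearly realizing the infimum defining $\norm{z_m}{f}{}$; since $kf(e_1+\sum_i\rho_{i-1}(z_m/k^{1/s})e_i)\geq kf(e_1)$ by \eqref{crucial} and $f(e_1)>0$, the values $k_m$ must tend to $0$, so eventually $1/k_m^{1/s}>\lambda$ and Lemma \ref{lem1} yields $\rho_{j_0-1}(z_m/k_m^{1/s})\geq(\lambda^s k_m)^{-1}\rho_{j_0-1}(\lambda z_m)\geq c/(\lambda^s k_m)=:R_m\to\infty$. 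Dropping the remaining coordinates via \eqref{crucial} and applying the scaling estimate $f(\theta w)\geq\theta f(w)$ valid for $\theta\geq 1$ (immediate from convexity and $f(0)=0$) with $\theta=R_m$ and $w=R_m^{-1}e_1+e_{j_0}$, I get $k_mf(e_1+\sum_i\rho_{i-1}(z_m/k_m^{1/s})e_i)\geq k_mf(e_1+R_m e_{j_0})\geq k_m R_m f(R_m^{-1}e_1+e_{j_0})=(c/\lambda^s)f(R_m^{-1}e_1+e_{j_0})$, which converges to $(c/\lambda^s)f(e_{j_0})>0$ since $f$ vanishes only at the origin. This contradicts $\norm{z_m}{f}{}\to 0$, proving $\rho(\lambda z_m)\to 0$ for every $\lambda$.

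With the lemma in hand the remaining claims are immediate. Applying it to $z_m=x_{k_m}-x_{j_m}$ along index sequences witnessing a failure of either Cauchy condition shows, by contradiction, that $(x_m)$ is $\norm{\cdot}{f}{}$-Cauchy precisely when $\rho(\lambda(x_k-x_j))\to 0$ as $k,j\to\infty$ for every $\lambda>0$, which is the last assertion of the Proposition. Finally, given a $\norm{\cdot}{f}{}$-Cauchy sequence, this modular-Cauchy property together with $(iii)$ produces $x\in{X_\rho}$ with $\rho(\lambda(x_m-x))\to 0$ for all $\lambda$, and the easy implication of the lemma upgrades this to $\norm{x_m-x}{f}{}\to 0$. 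Hence $(X_\rho,\norm{\cdot}{f}{})$ is complete, so it is an $s$-Banach function space, as required.
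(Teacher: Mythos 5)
Your proof is correct, and its skeleton coincides with the paper's: the ideal property and lattice structure come from $(i)$ together with \eqref{crucial}, strict positivity from $(ii)$, the $s$-norm axioms from Theorem \ref{increasing}, and completeness from $(iii)$ once norm-Cauchy and modular-Cauchy sequences are identified. The genuine difference is how the central equivalence ``$\norm{z_m}{f}{}\rightarrow 0$ if and only if $\rho(\lambda z_m)\rightarrow 0$ for all $\lambda>0$'' is established. The paper obtains it by citation: it invokes Corollary \ref{equival:f&A} to replace $\norm{\cdot}{f}{}$ by the equivalent norm $\norm{\cdot}{A}{}$ and then appeals to Theorems 1.6 and 1.10 of Musielak's book, which link Luxemburg-type norm convergence to modular convergence. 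You instead prove the equivalence from scratch: the forward direction by testing the infimum at $k=\lambda^{-s}$ and using continuity of the finite convex function $f$; the reverse by noting that near-optimal parameters $k_m$ must tend to $0$ (since $k_mf(e_1)$ is dominated by the vanishing norm), then combining Lemma \ref{lem1} with the superhomogeneity $f(\theta w)\geq\theta f(w)$ for $\theta\geq 1$ (valid since $f$ is convex and $f(0)=0$) to force the lower bound $(c/\lambda^s)f(R_m^{-1}e_1+e_{j_0})\rightarrow(c/\lambda^s)f(e_{j_0})>0$, contradicting $\norm{z_m}{f}{}\rightarrow 0$. Your route is longer but self-contained, and it buys something concrete: Corollary \ref{equival:f&A} is stated for $f$ a \emph{norm} on $\mathbb{R}^n$, whereas Proposition \ref{proposistion:BFS} assumes only that $f$ is convex, satisfies \eqref{crucial}, and vanishes exactly at the origin; your scaling argument uses no homogeneity of $f$, so it covers the stated hypotheses directly, while the paper's citation route implicitly asks the reader to check that the norm-equivalence machinery extends to this convex setting. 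What the paper's approach buys in exchange is brevity and consistency with the modular-space framework of Musielak that it leans on throughout.
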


\begin{proof}
	First, observe that $X_\rho$ is a Banach lattice. Indeed, for any $x,y,z\in{X_\rho}$ and $a\in\mathbb{R}^+$ such that $x\leq{y}$ a.e. we have $x+z\leq{y+z}$ a.e. Moreover, if $x\geq{0}$ a.e., then $ax\geq{0}$ a.e. Next, by condition $(i)$, for any $x\in{X_\rho}$, $y\in{L^0}$ such that $|y|\leq|x|$ a.e. we get 
	\begin{equation*}
	\rho(\lambda y)=\max_{1\leq i\leq{n-1}}\{\rho_i(\lambda y)\}\leq\max_{1\leq i\leq{n-1}}\{\rho_i(\lambda x)\}=\rho(\lambda x)
	\end{equation*}
	for every $\lambda>0$, consequently $y\in{X_\rho}$. Now, we show that $x\vee{y}\in{X_\rho}$ for any $x,y\in{X_\rho}$. It is easy to see that $\rho(\lambda x)=\rho(\lambda|x|)$ for any $\lambda>0$, whence $|x|\in{X_\rho}$, and analogously $|y|\in{X_\rho}$. Since $|x\vee{y}|\leq|x|\vee|y|\leq|x|+|y|$ a.e. Therefore, since $|x|+|y|\in{X_\rho}$ we have $|x\vee y|\in{X_\rho}$. Thus, we observe $x\vee{y}\in{X_\rho}$. We claim that the norm $\norm{\cdot}{f}{}$ is monotone, i.e. for any $x,y\in{X_\rho}$ such that $|x|\leq|y|$ a.e. we have $\norm{x}{f}{}\leq\norm{y}{f}{}$. Indeed, by conditions \eqref{crucial} and $(i)$ it follows that 
	\begin{align*}
	\norm{x}{f}{}&=\inf_{k>0}\left\{kf\left(e_1+\sum_{i=2}^n\rho_{i-1}\left(\frac{x}{k^{1/s}}\right)e_i\right)\right\}\\
	&\leq\inf_{k>0}\left\{kf\left(e_1+\sum_{i=2}^n\rho_{i-1}\left(\frac{y}{k^{1/s}}\right)e_i\right)\right\}=\norm{y}{f}{}.
	\end{align*} 
	Finally, by Theorems 1.6 and 1.10 in \cite{Mus} and by Corollary \ref{equival:f&A} we conclude that for any $(y_m)\subset{X_\rho}$ we have $\norm{y_m}{f}{}\rightarrow{0}$ if and only if $\rho(\lambda(y_m))\rightarrow{0}$ as $m\rightarrow\infty$ for all $\lambda>0$ (or equivalently $\rho_i(\lambda(y_m))\rightarrow{0}$ as $m\rightarrow\infty$ for all $\lambda>0$ and $i\in\{1,\dots,n-1\}$).
	Therefore, $(x_m)\subset{X_\rho}$ is a Cauchy sequence in $X_\rho$ with respect to $\norm{\cdot}{f}{}$ if and only if $\rho(\lambda(x_j-x_k))\rightarrow{0}$ as $k,j\rightarrow\infty$ for all $\lambda>0$. Hence, by condition $(iii)$, we infer that $(X_\rho,\norm{\cdot}{f}{})$ is complete and finish the proof.	 
\end{proof}

\begin{lemma}	
	Let $f$, $X_\rho$ and $\rho_i$ for any $i\in\{1,\dots,n-1\}$ satisfy requirements of Proposition \ref{proposistion:BFS}. If for any $(x_m)\subset{X_\rho}$ and $x\in{L^0}$ we have
	\begin{equation*}
	\liminf_{m\rightarrow\infty}|x_m|\geq|x|\quad\textnormal{a.e.}\quad\Rightarrow\quad\liminf_{m\rightarrow\infty}\rho_i(\lambda{}x_m)\geq\rho_i(\lambda{}x)
	\end{equation*}
	for all $i\in\{1,\dots,n-1\}$ and $\lambda>0$, then $\rho_i$ for any $i\in\{1,\dots,n-1\}$ and $\rho$ are left-continuous on $(0,\infty)$ and continuous at zero. 
\end{lemma}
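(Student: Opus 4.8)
The plan is to record first the monotonicity of the maps $\lambda\mapsto\rho_i(\lambda x)$, and then to handle continuity at zero and left-continuity by two separate arguments: the former relying only on $s$-convexity, the latter on the lower-semicontinuity (Fatou-type) hypothesis. I would begin by fixing $x\in X_\rho$ and $i\in\{1,\dots,n-1\}$ and observing that $\lambda\mapsto\rho_i(\lambda x)$ is nondecreasing on $[0,\infty)$: for $0<\lambda_1\leq\lambda_2$ one applies Lemma \ref{lem1} with $d=\lambda_2/\lambda_1\geq 1$ to the point $\lambda_1 x$, giving $\rho_i(\lambda_2 x)\geq(\lambda_2/\lambda_1)^s\rho_i(\lambda_1 x)\geq\rho_i(\lambda_1 x)$. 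Consequently every one-sided limit in $\lambda$ exists as a monotone limit (possibly $+\infty$).

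For continuity at zero I would use that $x\in X_\rho$ provides $d>0$ with $\rho(dx)<\infty$, hence $\rho_i(dx)\leq\rho(dx)<\infty$. For $0<\lambda\leq d$, writing $\lambda x=\frac{\lambda}{d}(dx)+b\cdot 0$ with $b=(1-(\lambda/d)^s)^{1/s}$ and invoking $s$-convexity of $\rho_i$ together with $\rho_i(0)=0$ yields
\begin{equation*}
\rho_i(\lambda x)\leq\left(\frac{\lambda}{d}\right)^s\rho_i(dx),
\end{equation*}
and the right-hand side tends to $0$ as $\lambda\to 0^+$. Thus $\lim_{\lambda\to 0^+}\rho_i(\lambda x)=0=\rho_i(0)$, and since $\rho=\max_{1\leq i\leq n-1}\rho_i$ is a finite maximum of functions each tending to $0$, the same holds for $\rho$. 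I would note explicitly that this step does not require the Fatou-type hypothesis at all.

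The hypothesis enters precisely in the left-continuity. I would fix $\lambda_0\in(0,\infty)$ and an arbitrary sequence $\lambda_m\uparrow\lambda_0$, and set $y_m=(\lambda_m/\lambda_0)x$, which lies in $X_\rho$ as a scalar multiple of $x$. Then $|y_m|=(\lambda_m/\lambda_0)|x|\uparrow|x|$ a.e., so $\liminf_m|y_m|\geq|x|$ a.e., and applying the assumed implication with $\lambda=\lambda_0$ gives $\liminf_m\rho_i(\lambda_0 y_m)\geq\rho_i(\lambda_0 x)$, that is $\liminf_m\rho_i(\lambda_m x)\geq\rho_i(\lambda_0 x)$. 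Combined with the monotonicity bound $\rho_i(\lambda_m x)\leq\rho_i(\lambda_0 x)$, this forces $\lim_m\rho_i(\lambda_m x)=\rho_i(\lambda_0 x)$; since $(\lambda_m)$ was an arbitrary sequence increasing to $\lambda_0$ and $\lambda\mapsto\rho_i(\lambda x)$ is monotone, this is exactly left-continuity of $\rho_i$ at $\lambda_0$. The argument is insensitive to whether $\rho_i(\lambda_0 x)$ is finite or $+\infty$, since in the latter case both sides of the sandwich are infinite.

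Finally, to transfer left-continuity to $\rho=\max_{1\leq i\leq n-1}\rho_i$ I would use monotonicity once more: for $\lambda<\lambda_0$ one has $\rho(\lambda x)=\max_i\rho_i(\lambda x)\leq\rho(\lambda_0 x)$, while for each fixed $j$ we have $\rho(\lambda x)\geq\rho_j(\lambda x)\to\rho_j(\lambda_0 x)$ as $\lambda\uparrow\lambda_0$, so $\liminf_{\lambda\uparrow\lambda_0}\rho(\lambda x)\geq\max_j\rho_j(\lambda_0 x)=\rho(\lambda_0 x)$; hence $\lim_{\lambda\uparrow\lambda_0}\rho(\lambda x)=\rho(\lambda_0 x)$. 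The only genuinely delicate point in the whole proof is the left-continuity step, where one must align the direction of the Fatou inequality (a lower bound on the $\liminf$) with the opposite monotonicity bound; the choice of the rescaled sequence $y_m$, arranged so that the fixed parameter in the hypothesis is exactly $\lambda_0$, is what makes the two bounds meet.
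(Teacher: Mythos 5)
Your proof is correct and follows essentially the same route as the paper: monotonicity of $\lambda\mapsto\rho_i(\lambda x)$ via Lemma \ref{lem1}, continuity at zero from the $s$-convexity scaling bound using $\rho_i(dx)\leq\rho(dx)<\infty$, and left-continuity by applying the Fatou-type hypothesis to a sequence increasing to $\lambda_0 x$ and pairing the resulting $\liminf$ lower bound with the monotone upper bound. The only differences are cosmetic: you argue directly by a sandwich where the paper argues by contradiction (which lets you cover the case $\rho_i(\lambda_0 x)=\infty$ that the paper's contradiction setup tacitly excludes), and you spell out the passage from the individual $\rho_i$ to their maximum $\rho$, which the paper leaves implicit.
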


\begin{proof}
	First, we easily observe that $\rho_i$ for any $i\in\{1,\dots,n-1\}$ and $\rho$ are continuous at zero. Indeed, taking $x\in{X_\rho}$, there exists $\beta>0$ such that $\rho(\beta{x})<\infty$, and for any $(\lambda_m)\subset\mathbb{R}\setminus\{0\}$ such that $1\geq|\lambda_m|\downarrow{0}$, by Lemma \ref{lem1} we have 
	\begin{equation*}
	\rho_i(\lambda_m\beta x)\leq\rho(\lambda_m\beta{x})\leq|\lambda_m|^s\rho(\beta x)\rightarrow{0}\quad\textnormal{as}\quad{m\rightarrow\infty}
	\end{equation*}
	for any $i\in\{1,\dots,n-1\}$. Now, we show that $\rho_i$ for every $i\in\{1,\dots,n-1\}$ is left-continuous on $(0,\infty)$. Suppose for a contrary that there exist $\lambda_0>0$, $x\in{X_\rho}$ and $i\in\{1,\dots,n-1\}$ such that 
	\begin{equation*}
	\lim_{\lambda\rightarrow\lambda_0^-}\rho(\lambda{x})<\rho_i(\lambda_0{x})<\infty.
	\end{equation*}
	Since $\rho_i(\lambda{x})\leq\rho_i(\beta{x})$ for any $\lambda\leq\beta$, for any sequence $(\lambda_m)\subset\mathbb{R}^+$ such that $\lambda_m\uparrow\lambda_0$ we get
	\begin{equation}\label{equ:continu:rho}
	\rho(\lambda_m{x})\uparrow\lim_{\lambda\rightarrow\lambda_0^-}\rho_i(\lambda{x}).
	\end{equation}
	Next, by assumption that for any sequence $(y_m)\subset{X_\rho}$ and $y\in{L^0}$,
	\begin{equation*}
	\liminf_{m\rightarrow\infty}|y_m|\geq|y|\quad\textnormal{a.e.}\quad\Rightarrow\quad\liminf_{m\rightarrow\infty}\rho_i(\lambda{}y_m)\geq\rho_i(\lambda{}y),
	\end{equation*}
	replacing $y_m$ and $y$ by $\lambda_m{x}$ and $\lambda_0{x}$ respectively, we obtain
	\begin{equation*}
	\liminf_{m\rightarrow\infty}\rho_i(\lambda_m{x})\geq\rho_i(\lambda_0{x})>\lim_{\lambda\rightarrow\lambda_0^-}\rho_i(\lambda{x})\geq\rho_i(\lambda_m{x})
	\end{equation*}  
	for any $m\in\mathbb{N}$. In consequence, by \eqref{equ:continu:rho} we have a contradiction, which proves left-continuity of $\rho_i$ for any $i\in\{1,\dots,n-1\}$ and $\rho$ on $(0,\infty)$. 
\end{proof}

\begin{theorem}\label{thm:Fatou}
	Let $f:\mathbb{R}^n\rightarrow[0,\infty)$ be a monotone norm and let $X_\rho$ and $\rho_1,\cdots,\rho_{n-1}$ be continuous and satisfy requirements of Proposition \ref{proposistion:BFS}. If for any $(x_m)\subset({X_\rho})$ and $x\in{L^0}$ we have
	\begin{equation*}
	\liminf_{m\rightarrow\infty}|x_m|\geq|x|\quad\textnormal{a.e.}\quad\Rightarrow\quad\liminf_{m\rightarrow\infty}\rho_i(\lambda{}x_m)\geq\rho_i(\lambda{}x)
	\end{equation*}
	for all $i\in\{1,\dots,n-1\}$ and $\lambda>0$, then $X_\rho$ has the Fatou property.
\end{theorem}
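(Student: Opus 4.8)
The plan is to prove the two halves of the Fatou property separately: that the pointwise supremum $x$ belongs to $X_\rho$, and that $\norm{x_m}{f}{}\uparrow\norm{x}{f}{}$. Throughout abbreviate $F_k(y)=kf\big(e_1+\sum_{i=2}^n\rho_{i-1}(y/k^{1/s})e_i\big)$, so that $\norm{y}{f}{}=\inf_{k>0}F_k(y)$, and assume $x\neq0$ (if $x=0$ then $x_m=0$ and the statement is trivial). Since $x_m\uparrow x$ with $x_m\geq0$, the lower-semicontinuity hypothesis on the modular together with the monotonicity property $(i)$ of Proposition \ref{proposistion:BFS} yields the key convergence $\rho_i(\lambda x_m)\uparrow\rho_i(\lambda x)$ for every $\lambda>0$ and every $i$: indeed $\liminf_m\rho_i(\lambda x_m)\geq\rho_i(\lambda x)$ by assumption, while $\rho_i(\lambda x_m)\leq\rho_i(\lambda x)$ and the sequence is nondecreasing by monotonicity of $\rho_i$. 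Because $\norm{\cdot}{f}{}$ is monotone (Proposition \ref{proposistion:BFS}) and $x_m\leq x$, the numbers $\norm{x_m}{f}{}$ increase to some $\ell\leq\sup_m\norm{x_m}{f}{}<\infty$; once $x\in X_\rho$ is known we automatically have $\ell\leq\norm{x}{f}{}$, so everything reduces to proving $x\in X_\rho$ together with the reverse inequality $\norm{x}{f}{}\leq\ell$.

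For each $m$ choose $k_m>0$ with $F_{k_m}(x_m)\leq\norm{x_m}{f}{}+1/m\leq\ell+1/m$. Monotonicity of $f$ gives $k_mf(e_1)\leq F_{k_m}(x_m)\leq\ell+1$, so $(k_m)$ is bounded by $(\ell+1)/f(e_1)$; passing to a subsequence we may assume $k_m\to k_*\in[0,\infty)$. The argument now splits according to whether $k_*>0$ or $k_*=0$, which is precisely the dichotomy recorded in Theorem \ref{thm:regular}$(iii)$.

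Suppose first $k_*>0$ and fix any $k''>k_*$, so that $k_m<k''$ for large $m$. Then $\rho_{i-1}(x_m/k_m^{1/s})\geq\rho_{i-1}(x_m/(k'')^{1/s})$, and monotonicity of $f$ gives $F_{k_m}(x_m)\geq k_mf\big(e_1+\sum_{i=2}^n\rho_{i-1}(x_m/(k'')^{1/s})e_i\big)$. Letting $m\to\infty$ and using the key convergence $\rho_{i-1}(x_m/(k'')^{1/s})\uparrow\rho_{i-1}(x/(k'')^{1/s})$ together with continuity of the norm $f$, the right-hand side tends to $k_*f\big(e_1+\sum_{i=2}^n\rho_{i-1}(x/(k'')^{1/s})e_i\big)$; were $\rho(x/(k'')^{1/s})=\infty$ this limit would be $+\infty$, contradicting $F_{k_m}(x_m)\leq\ell+1$, so $\rho(x/(k'')^{1/s})<\infty$ and in particular $x\in X_\rho$. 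Hence $k_*f\big(e_1+\sum_{i=2}^n\rho_{i-1}(x/(k'')^{1/s})e_i\big)\leq\ell$ for every $k''>k_*$; the uniform bound $f\big(e_1+\sum_{i=2}^n\rho_{i-1}(x/(k'')^{1/s})e_i\big)\leq\ell/k_*$ prevents any coordinate from blowing up, so $\rho(x/k_*^{1/s})<\infty$. Letting $k''\downarrow k_*$ and invoking continuity of $\rho$ to pass the limit through each $\rho_{i-1}(x/(k'')^{1/s})\to\rho_{i-1}(x/k_*^{1/s})$, we obtain $\norm{x}{f}{}\leq F_{k_*}(x)\leq\ell$.

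Now suppose $k_*=0$, so $k_m\to0$. The proof of Theorem \ref{thm:regular}$(iii)$ shows that for monotone $f$ the map $k\mapsto kf\big(\sum_{i=2}^n\rho_{i-1}(y/k^{1/s})e_i\big)$ is nonincreasing. Thus for fixed $\kappa>0$ and all large $m$ (so $k_m<\kappa$), dropping $e_1$ by monotonicity of $f$ and then applying this nonincreasingness gives $F_{k_m}(x_m)\geq k_mf\big(\sum_{i=2}^n\rho_{i-1}(x_m/k_m^{1/s})e_i\big)\geq\kappa f\big(\sum_{i=2}^n\rho_{i-1}(x_m/\kappa^{1/s})e_i\big)$. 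Letting $m\to\infty$ with the key convergence and continuity of $f$ yields $\kappa f\big(\sum_{i=2}^n\rho_{i-1}(x/\kappa^{1/s})e_i\big)\leq\ell$; finiteness of the left side again forces $\rho(x/\kappa^{1/s})<\infty$, so $x\in X_\rho$. Letting $\kappa\downarrow0$ and using the proof of Theorem \ref{thm:regular}$(iii)$, where this limit is identified with $\liminf_{k\to0^+}F_k(x)$ and hence dominates $\inf_kF_k(x)=\norm{x}{f}{}$, we get $\norm{x}{f}{}\leq\ell$. In either case $\norm{x}{f}{}\leq\ell$, which together with $\ell\leq\norm{x}{f}{}$ gives $\norm{x_m}{f}{}\uparrow\norm{x}{f}{}$ and completes the proof. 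The genuine obstacle is exactly this passage of the limit through the infimum defining $\norm{\cdot}{f}{}$: since the norm is an infimum of the lower-semicontinuous functionals $F_k$, its lower semicontinuity is not automatic, and the two-regime analysis of the near-optimal parameters $k_m$ — stabilization away from $0$ versus $k_m\to0$ — is where continuity of $\rho$ and the structural dichotomy of Theorem \ref{thm:regular}$(iii)$ are essential.
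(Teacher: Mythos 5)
Your proof is correct and follows essentially the same strategy as the paper's: the key monotone convergence $\rho_i(\lambda x_m)\uparrow\rho_i(\lambda x)$ obtained from the hypothesis plus monotonicity, near-optimal scaling parameters forming a bounded sequence with a subsequential limit $k_*$, and the dichotomy $k_*>0$ versus $k_*=0$, resolved respectively by (left-)continuity of the semimodulars and by the nonincreasingness of $k\mapsto kf\left(\sum_{i\geq 2}\rho_{i-1}\left(\cdot/k^{1/s}\right)e_i\right)$ combined with the triangle-inequality identification of the limit with $\liminf_{k\to 0^+}F_k(x)\geq\norm{x}{f}{}$. The only organizational differences are inessential: the paper establishes $x\in X_\rho$ upfront via the norm equivalence of Corollary \ref{equival:f&A} and a uniform modular bound, and indexes its near-optimal parameters $k_j$ against $x_{m+j}$ for a fixed $m$, whereas you extract membership $x\in X_\rho$ inside each case from the finiteness forced by the bound $\ell+1$ and choose $k_m$ near-optimal for $x_m$ directly.
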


\begin{proof}
	Let $(x_m)\subset{X_\rho^+}$ and $x\in{L^0}$ be such that ${x_m}\leq{x}$ for all $m\in\mathbb{N}$ and $x_m\uparrow{x}$ a.e. and $\sup_{m\in\mathbb{N}}\norm{x_m}{f}{}<\infty$. Then, by Corollary \ref{equival:f&A} there is $A_1>0$ such that for any $m\in\mathbb{N}$
	\begin{equation*}
	\norm{x_m}{A}{}\leq{A_1}\norm{x_m}{f}{}\leq{A_1}\sup_{m\in\mathbb{N}}\norm{x_m}{f}{}.
	\end{equation*}
	Hence, taking $M>A_1\sup_{m\in\mathbb{N}}\norm{x_m}{f}{}$, by Theorem 1.10 in \cite{Mus} we have
	\begin{equation*}
	\norm{x_m}{\rho}{s}\leq\norm{x_m}{A}{}<M
	\end{equation*}
	for every $m\in\mathbb{N}$. Thus, we obtain
	\begin{equation}\label{equ1:Fatou}
	\rho_i\left(\frac{x_m}{M^{1/s}}\right)\leq\rho\left(\frac{x_m}{M^{1/s}}\right)\leq{1}
	\end{equation}
	for any $m\in\mathbb{N}$ and $i\in\{1,\dots,n-1\}$. Moreover, since $|x_m|\leq|x|$ a.e. for every $m\in\mathbb{N}$, by monotonicity of $\rho_i$ for all $i\in\{1,\dots,n-1\}$ we get 
	\begin{equation}\label{equ2:Fatou}
	\rho_i(\lambda x_m)\leq\rho_i(\lambda x)	
	\end{equation}
	for any $\lambda>0$, $m\in\mathbb{N}$ and $i\in\{1,\dots,n-1\}$. Next, since $x_m\uparrow{x}$ a.e., it is clear that $\liminf_{m\rightarrow\infty}|x_m|\geq|x|$ a.e. and consequently, 
	\begin{equation*}
	\liminf_{m\rightarrow\infty}\rho_i(\lambda x_m)\geq\rho_i(\lambda x)
	\end{equation*}
	for every $\lambda>0$ and $i\in\{1,\dots,n-1\}$. Hence, by \eqref{equ2:Fatou} we conclude
	\begin{equation}\label{equ3:Fatou}
	\rho_i(\lambda x_m)\uparrow\rho_i(\lambda x)\quad\textnormal{as}\quad{m\rightarrow\infty}
	\end{equation}
	for any $\lambda>0$ and $i\in\{1,\dots,n-1\}$. Therefore, by \eqref{equ1:Fatou} we obtain
	\begin{equation*}
	\rho\left(\frac{x}{M^{1/s}}\right)=\max_{1\leq i\leq{n-1}}\left\{\rho_i\left(\frac{x}{M^{1/s}}\right)\right\}\leq{1},
	\end{equation*}
	whence $x\in{X_\rho}$. Next, since $f$ is a monotone and convex on $\mathbb{R}^n$, by \eqref{equ3:Fatou} it follows that
	\begin{equation}\label{equ:FP:converg}
	kf\left(e_1+\sum_{i=2}^n\rho_{i-1}\left(\frac{x_m}{k^{1/s}}\right)e_i\right)\uparrow kf\left(e_1+\sum_{i=2}^n\rho_{i-1}\left(\frac{x}{k^{1/s}}\right)e_i\right)\quad\textnormal{as}\quad{}m\rightarrow\infty
	\end{equation}
	for any $k>0$. Moreover, by definition of the norm $\norm{\cdot}{f}{}$ it is easy to see that 
	\begin{equation}\label{equ:FP:inequa}
	\norm{x}{f}{}\leq{}kf\left(e_1+\sum_{i=2}^n\rho_{i-1}\left(\frac{x}{k^{1/s}}\right)e_i\right)
	\end{equation}
	for all $k>0$. Since $\norm{x_m}{f}{}\leq\norm{x_{m+1}}{f}{}\leq\norm{x}{f}{}$ for all $m\in\mathbb{N}$ and $d=\sup_{m\in\mathbb{N}}\norm{x_m}{f}{} <\infty$ we conclude $\norm{x_m}{f}{}\uparrow{d}$, whence for any $\epsilon\in(0,d/2)$ there exists $N_\epsilon\in\mathbb{N}$ such that for any $m\geq{N_\epsilon}$ we get
	\begin{equation}\label{equ:1:FP}
	d-\epsilon\leq\norm{x_m}{f}{}\leq\norm{x}{f}{}.
	\end{equation}
	Let $m\geq{N_\epsilon}$. Then, assuming that $(e_i)_{i=1}^n$ is standard basis in $\mathbb{R}^n$, by definition of a norm $\norm{\cdot}{f}{}$ there exists $k_1>0$ such that
	\begin{align*}
	\norm{x_m}{f}{}&\leq k_1f\left(e_1+\sum_{i=2}^{n}\rho_{i-1}\left(\frac{x_m}{k_1^{1/s}}\right)e_i\right)
	\leq{}k_1f\left(e_1+\sum_{i=2}^{n}\rho_{i-1}\left(\frac{x_{m+1}}{k_1^{1/s}}\right)e_i\right)\\
	&\leq\norm{x_{m+1}}{f}{}+\frac{1}{2}\leq{d}+\frac{1}{2}.
	\end{align*}
	Similarly, we may find $k_2>0$ such that
	\begin{align*}
	\norm{x_m}{f}{}&\leq k_2f\left(e_1+\sum_{i=2}^{n}\rho_{i-1}\left(\frac{x_m}{k_2^{1/s}}\right)e_i\right)
	\leq{}k_2f\left(e_1+\sum_{i=2}^{n}\rho_{i-1}\left(\frac{x_{m+1}}{k_2^{1/s}}\right)e_i\right)\\
	&\leq{}k_2f\left(e_1+\sum_{i=2}^{n}\rho_{i-1}\left(\frac{x_{m+2}}{k_2^{1/s}}\right)e_i\right)\\
	&\leq\norm{x_{m+2}}{f}{}+\frac{1}{4}\leq{d}+\frac{1}{4}.
	\end{align*}
	Next, by mathematical induction, there is a sequence $(k_j)\subset\mathbb{R}^+\setminus\{0\}$ such that 
	\begin{align}\label{equ:2:FP}
	\norm{x_m}{f}{}&\leq k_jf\left(e_1+\sum_{i=2}^{n}\rho_{i-1}\left(\frac{x_m}{k_j^{1/s}}\right)e_i\right)\leq{}k_jf\left(e_1+\sum_{i=2}^{n}\rho_{i-1}\left(\frac{x_{m+1}}{k_j^{1/s}}\right)e_i\right)\\
	&\leq\dots\leq{}k_jf\left(e_1+\sum_{i=2}^{n}\rho_{i-1}\left(\frac{x_{m+j}}{k_j^{1/s}}\right)e_i\right)\nonumber\\
	&\leq\norm{x_{m+j}}{f}{}+\frac{1}{2^j}\leq{d}+\frac{1}{2^j}.\nonumber
	\end{align}
	Now, we claim that $(k_j)$ is bounded. Indeed, if it is not true, then passing to subsequence and relabeling if necessary we may assume $k_j\rightarrow\infty$.	Next, by \eqref{equ:2:FP} we obtain
	\begin{equation*}
	d+\frac{1}{2^j}\geq{}k_jf\left(e_1+\sum_{i=2}^{n}\rho_{i-1}\left(\frac{x_m}{k_j^{1/s}}\right)e_i\right)\geq{k_j}f(e_1)>0.
	\end{equation*}
	and consequently, since $d<\infty$ we have a contradiction. So, $(k_j)$ has a convergent subsequence. Then, passing to subsequence and relabeling if necessary we may assume that $k_j\rightarrow{k_0}\in\mathbb{R}^+$. Now, we continue the proof in two cases.\\
	\textit{Case $1$.} Suppose that $k_0>0$. Therefore, by continuity of $f$ and $\rho_i$ for any $i\in\{1,\dots,n-1\}$ and by \eqref{equ:2:FP} we get
	\begin{equation*}
	\norm{x_m}{f}{}\leq
	k_0f\left(e_1+\sum_{i=2}^{n}\rho_{i-1}\left(\frac{x_m}{k_0^{1/s}}\right)e_i\right)\leq{}d
	\end{equation*}
	for any $m\geq{N_\epsilon}$. Hence, by \eqref{equ:1:FP} it follows that
	\begin{equation*}
	k_0f\left(e_1+\sum_{i=2}^{n}\rho_{i-1}\left(\frac{x_m}{k_0^{1/s}}\right)e_i\right)\rightarrow d\quad\textnormal{as}\quad{m\rightarrow\infty}.
	\end{equation*}
	Furthermore, by \eqref{equ:FP:converg} and \eqref{equ:FP:inequa}, without loss of generality we may assume that for any $m\geq{N_\epsilon}$, 
	\begin{equation*}
	\norm{x}{f}{}\leq{}k_0f\left(e_1+\sum_{i=2}^{n}\rho_{i-1}\left(\frac{x_m}{k_0^{1/s}}\right)e_i\right)+\epsilon.
	\end{equation*}
	In consequence, we have $\norm{x}{f}{}\leq{d}$. On the other hand, since $x_m\leq{x}$ a.e. for every $m\in\mathbb{N}$, by monotonicity of the norm $\norm{\cdot}{f}{}$ this yields that $\norm{x_m}{f}{}\leq\norm{x}{f}{}$ for all $m\in\mathbb{N}$. Thus, by definition of $d$ we conclude $\norm{x}{f}{}=d$.\\
	\textit{Case $2$.} Now, we assume that ${k_0}=0$. Without loss of generality, passing to subsequence and relabeling if necessary we may suppose that $k_j\downarrow{0}$. Then, since $\rho_i$ is $s$-convex for any $i\in\{1,\dots,n-1\}$, by Lemma \ref{lem1} we notice that for any $0<u\leq{v}$,
	\begin{equation*}
	v\rho_i\left(\frac{x}{v^{1/s}}\right)\leq{u}\rho_i\left(\frac{x}{u^{1/s}}\right)
	\end{equation*}
	for any $i\in\{1,\dots,n-1\}$. Next, since $f$ is convex and monotone, it is easy to see that for any $0<u\leq{v}$,
	\begin{equation}\label{equ:5:FP}
	vf\left(\sum_{i=2}^{n}\rho_{i-1}\left(\frac{x}{v^{1/s}}\right)e_i\right)\leq{}uf\left(\sum_{i=2}^{n}\rho_{i-1}\left(\frac{x}{u^{1/s}}\right)e_i\right).
	\end{equation}
	In view of the above inequality, since $k_j\downarrow{0}$, we may define  
	\begin{equation*}
	c=\lim_{j\rightarrow\infty}k_jf\left(\sum_{i=2}^{n}\rho_{i-1}\left(\frac{x}{k_j^{1/s}}\right)e_i\right)\quad\textnormal{and}\quad 	c_m=\lim_{j\rightarrow\infty}k_jf\left(\sum_{i=2}^{n}\rho_{i-1}\left(\frac{x_m}{k_j^{1/s}}\right)e_i\right).
	\end{equation*}
	for any $m\geq{N_\epsilon}$. Furthermore, since $f$ is monotone, by \eqref{equ:2:FP} and \eqref{equ:5:FP} we get
	\begin{equation*}
	d\geq{c_m}\geq k_jf\left(\sum_{i=2}^{n}\rho_{i-1}\left(\frac{x_m}{k_j^{1/s}}\right)e_i\right)
	\end{equation*}
	for all $m\geq{N_\epsilon}$ and $j\in\mathbb{N}$. Hence, since $k_j>0$ for every $j\in\mathbb{N}$, by \eqref{equ:FP:converg} we obtain
	\begin{equation*}
	d\geq k_jf\left(\sum_{i=2}^{n}\rho_{i-1}\left(\frac{x}{k_j^{1/s}}\right)e_i\right)
	\end{equation*}
	for any $j\in\mathbb{N}$. Consequently, by definition of $c$ it follows that $c\leq{d}$. Furthermore, by the triangle inequality of the norm $f$, we easily observe that for any $j\in\mathbb{N}$,
	\begin{align*}
	-k_jf(e_1)+k_jf\left(\sum_{i=2}^{n}\rho_{i-1}\left(\frac{x}{k_j^{1/s}}\right)e_i\right)
	&\leq
	k_jf\left(e_1+\sum_{i=2}^{n}\rho_{i-1}\left(\frac{x}{k_j^{1/s}}\right)e_i\right)\\
	&\leq{}k_jf\left(\sum_{i=2}^{n}\rho_{i-1}\left(\frac{x}{k_j^{1/s}}\right)e_i\right)+k_jf(e_1).
	\end{align*}
	Therefore, by \eqref{equ:FP:inequa} and by definition of $c$ we have
	\begin{equation*}
	d\geq{c}=\lim_{j\rightarrow\infty}k_jf\left(e_1+\sum_{i=2}^{n}\rho_{i-1}\left(\frac{x}{k_j^{1/s}}\right)e_i\right)\geq\norm{x}{f}{}.
	\end{equation*}
	Finally, since $\norm{x_{m_k}}{f}{}\leq\norm{x}{f}{}$ for any $k\in\mathbb{N}$ and by assumption that $\norm{x_{m_k}}{f}{}\uparrow{d}$ we infer $\norm{x_{m_k}}{f}{}\uparrow\norm{x}{f}{}$ and finish case $2$.	
\end{proof}

Now, we discuss a complete characterization of order continuity in the space $X_\rho$.

\begin{theorem}\label{thm:OC-space}
	Let $X_\rho$, $f$ and $\rho_1,...,\rho_{n-1}$ satisfy requirements of Proposition \ref{proposistion:BFS}. Additionally, assume that $ f: \mathbb{R}^n \rightarrow [0, +\infty)$ is a norm on $ \mathbb{R}^n$ satisfying (\ref{crucial}). The $s$-norm $\norm{\cdot}{f}{}$ is order continuous if and only if for any $\lambda>0$ and $(x_m)\subset({X_\rho})^+$, $x_m\downarrow{0}$ a.e. we have $\rho(\lambda{x_m})\rightarrow{0}$.
\end{theorem}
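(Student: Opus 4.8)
The plan is to reduce both implications to the modular--norm relation for null sequences that was already established inside the proof of Proposition \ref{proposistion:BFS}: for any $(y_m)\subset X_\rho$ one has $\norm{y_m}{f}{}\to 0$ if and only if $\rho(\lambda y_m)\to 0$ for every $\lambda>0$ (equivalently $\rho_i(\lambda y_m)\to 0$ for all $i$ and all $\lambda$). Once this equivalence is quoted, both directions become short, since the definition of order continuity of $\norm{\cdot}{f}{}$ is exactly the statement that every $x\in X_\rho$ is a point of order continuity.

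For the necessity I would argue as follows. Assume $X_\rho\in(OC)$ and fix $\lambda>0$ together with a sequence $(x_m)\subset(X_\rho)^+$ with $x_m\downarrow 0$ a.e. Since $x_m\le x_1=|x_1|\in X_\rho$ and $x_m\to 0$ a.e., the fact that $x_1$ is a point of order continuity forces $\norm{x_m}{f}{}\to 0$; applying the quoted equivalence with the fixed $\lambda$ then yields $\rho(\lambda x_m)\to 0$, which is precisely the asserted condition.

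For the sufficiency, assume the modular condition holds on decreasing null sequences and fix an arbitrary $x\in X_\rho$. To show $x$ is a point of order continuity, I take any $(y_m)\subset(X_\rho)^+$ with $y_m\le|x|$ and $y_m\to 0$ a.e., and must prove $\norm{y_m}{f}{}\to 0$. The key device is the monotone majorant $z_m=\sup_{k\ge m}y_k$: it is measurable with $z_m\le|x|$ a.e., so by the ideal property (condition $(i)$ of Proposition \ref{proposistion:BFS}) $z_m\in X_\rho$, and $(z_m)\subset(X_\rho)^+$ is non-increasing with $z_m\downarrow\limsup_k y_k=0$ a.e. By hypothesis $\rho(\lambda z_m)\to 0$ for every $\lambda>0$, and since $0\le y_m\le z_m$ the monotonicity of each $\rho_i$ (again condition $(i)$) gives $\rho(\lambda y_m)\le\rho(\lambda z_m)\to 0$ for all $\lambda>0$. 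The quoted equivalence then delivers $\norm{y_m}{f}{}\to 0$, so $x\in(X_\rho)_a$; as $x$ was arbitrary, $X_\rho\in(OC)$.

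The only genuinely non-routine step is the passage from an arbitrary order-null sequence $(y_m)$ to a decreasing one, handled by the supremum majorant $z_m=\sup_{k\ge m}y_k$; the points to verify carefully are that $z_m$ stays inside $X_\rho$ (ensured by domination by $|x|$ together with the ideal property) and that $z_m\downarrow 0$ a.e., which is just a reformulation of $\limsup_m y_m=0$. Everything else is a direct invocation of the modular--norm equivalence from Proposition \ref{proposistion:BFS} and the monotonicity of $\rho$, so the argument needs no compactness and no continuity of $\rho$ beyond what is already assumed, in contrast with the more delicate Theorem \ref{thm:Fatou}.
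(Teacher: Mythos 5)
Your proof is correct and follows essentially the same route as the paper's: both reduce the theorem to the equivalence, already established in the proof of Proposition \ref{proposistion:BFS} (via Corollary \ref{equival:f&A} and Theorems 1.6 and 1.10 of \cite{Mus}), that $\norm{y_m}{f}{}\rightarrow 0$ if and only if $\rho(\lambda y_m)\rightarrow 0$ for every $\lambda>0$. The only difference is that you spell out, via the majorant $z_m=\sup_{k\geq m}y_k$ and the ideal property, the passage from arbitrary dominated a.e.-null sequences to decreasing ones, a step the paper compresses into ``we easily observe.''
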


\begin{proof}
	Immediately, by Corollary \ref{equival:f&A} we have the norms $\norm{\cdot}{f}{}$ and $\norm{\cdot}{A}{}$ are equivalent. Moreover, by Theorem 1.10 in \cite{Mus} the norms $\norm{\cdot}{\rho}{s}$ and $\norm{\cdot}{A}{}$ are equivalent, where  $\norm{x}{\rho}{s}=\inf\{u>0:\rho(x/u^{1/s})\leq{1}\}$ for any $x\in{X_\rho}$.	Next, taking $(x_m)\subset{X_\rho}$, by Theorem 1.6 in \cite{Mus} it is well known that $\norm{x_m}{\rho}{s}\rightarrow{0}$ if and only if $\rho(x_m\lambda)\rightarrow{0}$ for every $\lambda>0$. Therefore, assuming that $(x_m)\subset{(X_\rho)^+}$ and $x_m\downarrow{0}$ a.e. we easily observe that $\norm{x_m}{f}{}\rightarrow{0}$ if and only if $\rho(x_m\lambda)\rightarrow{0}$ for any $\lambda>0$.
\end{proof}

Now, we show an example of semimodular space $X_\rho$ that is order continuous.

\begin{example}\label{example:Cesaro:OC}
	Let $f:\mathbb{R}^n\rightarrow[0,\infty)$ be a convex function satisfying \eqref{crucial}, $\phi_{i}$ be an Orlicz function satisfying $\Delta_2$ condition and let $w_i>{0}$ be a weight function such that $w_i\in{D_{\phi_i}}$ for any $i\in\{1,\dots,n-1\}$, i.e. for any $0\leq{a<b}<\infty$ and $i\in\{1,\dots,n-1\}$ we have 
	\begin{equation}\label{necessar:cond:2}
	\int_{a}^{b}\phi\left(\frac{t-a}{t}\right)w_i(t)dt<\infty,\quad\textnormal{ and }\quad\int_{b}^{\infty}\phi_i\left(\frac{b-a}{t}\right)w_i(t)dt<\infty.
	\end{equation}
	Define for any $x\in{L^0}$ and $i\in\{1,\dots,n-1\}$,
	\begin{equation*}
	\rho_i(x)=\int_I\phi_i(C(x)(t))w_i(t)dt\qquad\textnormal{and}\qquad\rho(x)=\max_{1\leq i\leq{n-1}}\{\rho_i(x)\},
	\end{equation*}
	where $C$ is the Ces\`{a}ro operator given by \eqref{Cesaro:oper}. Since $\phi_i$ is convex and $w_i>{0}$ for all $i\in\{1,\dots,n-1\}$, by subadditivity of the Ces\`{a}ro operator we easily observe that $\rho_i$ is convex, continuous, superadditive and monotone semimodular for any $i\in\{1,\dots,n-1\}$. Furthermore, by \eqref{necessar:cond:2} we have  
	\begin{align*}
	X_\rho&=\bigcap_{i=1}^{n-1}\left\{x\in{L^0}:\int_I\phi_i(\lambda{C(x)(t)})w_i(t)dt<\infty,\textnormal{ for some }\lambda>0\right\}\\
	&=\bigcap_{i=1}^{n-1}C_{\phi_i,w_i}\neq\{0\}.
	\end{align*}	
	Furthermore, by Proposition \ref{proposistion:BFS} we conclude that ${X_\rho}$ is a Banach function space.
	Now, we claim that $X_\rho$ is order continuous. Indeed, picking $i\in\{1,\dots,n-1\}$ and taking $(x_m)\subset{(C_{\phi_i,w_i})^+}$ such that $x_m\downarrow{0}$ a.e., since $\phi_i$ satisfies $\Delta_2$ condition, we can easily observe that $\rho_{i}(\lambda{x_m})<\infty$ for all $m\in\mathbb{N}$ and $\lambda>0$. Moreover, since $\rho_i(\lambda x_1)<\infty$ for any $\lambda>0$, applying twice Lebesgue Dominated Convergence Theorem (see \cite{Royd}) we obtain $\rho_i(\lambda{x_m})\rightarrow{0}$ for any $\lambda>0$, and consequently by definition of $\rho$ we infer $\rho(\lambda{x_m})\rightarrow{0}$ for all $\lambda>0$. Finally, by Theorem \ref{thm:OC-space} we prove our claim.
\end{example}

\begin{lemma}
	Let $X_\rho$, $f$ and $\rho_1,...,\rho_{n-1}$ satisfy requirements of Proposition \ref{proposistion:BFS}. The following condition are equivalent.
	\begin{itemize}
		\item[$(i)$] For any $\lambda>0$ and $(x_m)\subset({X_\rho})^+$ with $x_m\downarrow{0}$ a.e. we have $\rho(\lambda{x_m})\rightarrow{0}$.
		\item[$(ii)$] There is $\eta>1$ such that for any $(x_m)\subset({X_\rho})^+$ with $x_m\downarrow{0}$ a.e. we have $\rho(\eta{x_m})\rightarrow{0}$.
	\end{itemize}
\end{lemma}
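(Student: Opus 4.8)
The plan is to prove the nontrivial implication $(ii)\Rightarrow(i)$, since $(i)\Rightarrow(ii)$ is immediate: fix any $\eta>1$ and apply $(i)$ with $\lambda=\eta$.

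First I would record a monotonicity fact used repeatedly. Since each $\rho_i$ is an $s$-convex semimodular, their maximum $\rho=\max_{1\le i\le n-1}\{\rho_i\}$ is again an $s$-convex semimodular: conditions $(a)$ and $(b)$ are clear, while $(c)$ follows from
$$\rho(ax+by)=\max_i\rho_i(ax+by)\le\max_i\bigl(a^s\rho_i(x)+b^s\rho_i(y)\bigr)\le a^s\rho(x)+b^s\rho(y).$$
Hence Lemma \ref{lem1} applies to $\rho$, and for any $z\in X_\rho$ and $0<\lambda\le\mu$ we obtain, writing $d=\mu/\lambda\ge1$,
$$\rho(\mu z)=\rho\bigl(d\,\lambda z\bigr)\ge d^s\rho(\lambda z)\ge\rho(\lambda z),$$
so $t\mapsto\rho(tz)$ is nondecreasing on $(0,\infty)$.

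The key step is a bootstrapping induction on $k$ showing that $(ii)$ forces
$$\rho(\eta^k x_m)\to0\qquad\text{for every }k\in\mathbb N$$
and every $(x_m)\subset(X_\rho)^+$ with $x_m\downarrow0$ a.e. The case $k=1$ is exactly hypothesis $(ii)$. For the inductive step I would fix such a sequence and set $y_m=\eta x_m$. Since $\eta>0$ and $X_\rho$ is a linear space, $(y_m)\subset(X_\rho)^+$, and from $x_{m+1}\le x_m$ with $x_m\to0$ a.e. we get $y_{m+1}\le y_m$ with $y_m\to0$ a.e., i.e. $y_m\downarrow0$ a.e. The induction hypothesis applied to the sequence $(y_m)$ then gives $\rho(\eta^k y_m)\to0$, which is precisely $\rho(\eta^{k+1}x_m)\to0$. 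The crucial observation is that $(ii)$ is quantified over \emph{all} admissible sequences, so it may be legitimately reapplied to the rescaled sequence $(\eta x_m)$ using the same constant $\eta$.

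Finally, to recover $(i)$ I would fix an arbitrary $\lambda>0$ and a sequence $(x_m)\subset(X_\rho)^+$ with $x_m\downarrow0$ a.e. Choosing $k\in\mathbb N$ with $\eta^k\ge\lambda$ (possible because $\eta>1$) and invoking the monotonicity from the first paragraph with $z=x_m$, we get $\rho(\lambda x_m)\le\rho(\eta^k x_m)$, and the right-hand side tends to $0$ by the induction; hence $\rho(\lambda x_m)\to0$, which is $(i)$. I expect the main obstacle to be conceptual rather than computational: recognizing that a single multiplier $\eta$ suffices because reapplying $(ii)$ to $(\eta x_m)$ upgrades $\eta$ to $\eta^2$, and iterating covers all scales, after which the remaining estimates are routine consequences of Lemma \ref{lem1}.
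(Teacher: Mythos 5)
Your proof is correct and follows essentially the same route as the paper's: both exploit the fact that $(ii)$ is quantified over all admissible sequences, apply it to a rescaled sequence to reach the scale $\eta^k>\lambda$, and conclude via the monotonicity of $t\mapsto\rho(tz)$ coming from Lemma \ref{lem1}. The only difference is cosmetic: the paper rescales once by $\eta^{k-1}$ and applies $(ii)$ a single time, whereas you reach $\eta^k$ by induction, rescaling by $\eta$ at each step.
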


\begin{proof}
	Clearly, it is easy to see that $(i)\Rightarrow(ii)$. Now, we prove that $(ii)\Rightarrow(i)$. Let $\lambda>0$ and let $(x_m)\subset({X_\rho})^+$ with $x_m\downarrow{0}$ a.e. Suppose that there exists $\eta>1$ which satisfies $(ii)$. First, observe that if $\lambda\leq\eta$, then $\rho(\lambda{y_m})\leq\rho(\eta{x_m})\rightarrow{0}$. In case when $\lambda>\eta$, we may find $k\in\mathbb{N}\setminus\{1\}$ such that $\eta^k>\lambda$. Define $y_m=\eta^{k-1}x_m$ for any $m\in\mathbb{N}$. Notice that $(y_m)\subset({X_\rho})^+$ and $y_m\downarrow{0}$ a.e. Hence, by $(ii)$ we obtain $\rho(\eta{y_m})\rightarrow{0}$. Finally, we have
	\begin{equation*}
	\rho(\lambda{x_m})\leq\rho(\eta\eta^{k-1}{x_m})=\rho(\eta{y_m})\rightarrow{0}.
	\end{equation*}
\end{proof}

\begin{remark}\label{rem:rho-converg}
	Now, let us recall well known fact that for any sequence $(x_m)\subset{X_\rho}$ the following condition holds 
	\begin{equation}\label{rho-converg}
	\rho(x_m)\rightarrow{0}\quad\Rightarrow\quad\rho(2x_m)\rightarrow{0}
	\end{equation}
	if and only if the norm convergence $\norm{\cdot}{\rho}{s}$ in $X_\rho$ is equivalent to $\rho$-convergence in $X_\rho$, i.e. for any $(x_n)\subset{X_\rho}$ we have $\norm{x_n}{\rho}{s}\rightarrow{0}$ if and only if there is $\lambda>0$ such that $\rho(\lambda x_n)\rightarrow{0}$ (see Property 5.2 in \cite{Mus}).  Next, assuming that $f$ is a norm satisfying \eqref{crucial} on $\mathbb{R}^n$, by Theorem 1.10 in \cite{Mus} and by Corollary \ref{equival:f&A} we may easily replace the norm $\norm{\cdot}{\rho}{s}$ by the norm $\norm{\cdot}{f}{}$.
\end{remark}

\begin{proposition}\label{prop:FP=>OC}
	Let $X_\rho$, $f$ and $\rho_1,...,\rho_{n-1}$ satisfy requirements of Proposition \ref{proposistion:BFS}. If $\rho_i$ is superadditive and satisfies \eqref{rho-converg} for any $i\in\{1,\dots,n-1\}$ and also for any $(z_m)\subset{X_\rho}$ and $z\in{X_\rho}$ we have
	\begin{equation*}
	\liminf_{m\rightarrow\infty}|z_m|\geq{|z|}\textnormal{ a.e. }\quad\Rightarrow\quad\liminf_{m\rightarrow\infty}\rho_i(z_m)\geq{\rho_i(z)},
	\end{equation*}
	for every $i\in\{1,\dots,n-1\}$, then $X_\rho$ is order continuous.
\end{proposition}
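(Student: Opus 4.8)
The plan is to reduce order continuity to the modular criterion of Theorem~\ref{thm:OC-space} and then verify that criterion by a superadditivity-plus-Fatou argument, bootstrapped through the $\Delta_2$-type condition \eqref{rho-converg}. Thus it suffices to fix an arbitrary $\lambda>0$ and a sequence $(x_m)\subset(X_\rho)^+$ with $x_m\downarrow 0$ a.e.\ and to show $\rho(\lambda x_m)\to 0$; since $\rho=\max_i\rho_i$, this amounts to proving $\rho_i(\lambda x_m)\to 0$ for each fixed $i\in\{1,\dots,n-1\}$. Note that condition $(i)$ of Proposition~\ref{proposistion:BFS} makes each $\rho_i$ monotone, so $\bigl(\rho_i(\lambda x_m)\bigr)_m$ is nonincreasing and it is enough to rule out a positive limit.

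The core step handles a single scale where finiteness is available. Since $x_1\in X_\rho$, choose $\lambda_0>0$ with $\rho(\lambda_0 x_1)<\infty$, so that $\rho_i(\lambda_0 x_1)<\infty$ for every $i$. Put $u_m=\lambda_0 x_m$ and $v_m=u_1-u_m\ge 0$; then $v_m\uparrow u_1$ a.e.\ and $v_m\in(X_\rho)^+$ by monotonicity. I would combine two facts for fixed $i$: superadditivity gives, for each $m$,
\begin{equation*}
\rho_i(u_1)=\rho_i(u_m+v_m)\ge\rho_i(u_m)+\rho_i(v_m),
\end{equation*}
while $\liminf_m|v_m|\ge|u_1|$ a.e.\ together with the hypothesis yields $\liminf_m\rho_i(v_m)\ge\rho_i(u_1)$. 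Since $\rho_i(u_1)<\infty$, passing to $\limsup_m$ in $\rho_i(u_m)\le\rho_i(u_1)-\rho_i(v_m)$ gives $\limsup_m\rho_i(u_m)\le\rho_i(u_1)-\liminf_m\rho_i(v_m)\le 0$, hence $\rho_i(\lambda_0 x_m)\to 0$.

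It remains to upgrade from the single scale $\lambda_0$ to every $\lambda>0$. Applying \eqref{rho-converg} for $\rho_i$ repeatedly to the null sequence $(\lambda_0 x_m)_m$ gives $\rho_i(2^k\lambda_0 x_m)\to 0$ for every $k\in\mathbb{N}$; choosing $k$ with $2^k\lambda_0\ge\lambda$ and invoking monotonicity of $\rho_i$ yields $\rho_i(\lambda x_m)\le\rho_i(2^k\lambda_0 x_m)\to 0$. Taking the maximum over $i$ gives $\rho(\lambda x_m)\to 0$ for all $\lambda>0$, and Theorem~\ref{thm:OC-space} then delivers order continuity of $X_\rho$. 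The main obstacle I anticipate is exactly the finiteness issue: $\rho_i(\lambda x_1)$ may be infinite for the prescribed $\lambda$, so the superadditivity/Fatou estimate can only be run at a scale $\lambda_0$ for which $\rho_i(\lambda_0 x_1)<\infty$, and the doubling condition \eqref{rho-converg} is precisely what lets one transport the conclusion to all scales.
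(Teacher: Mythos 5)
Your proposal is correct and follows essentially the same route as the paper: decompose $x_1 = x_m + (x_1-x_m)$, combine superadditivity of $\rho_i$ with the Fatou-type hypothesis to get $\rho_i(x_m)\to 0$, then use \eqref{rho-converg} to pass to all scales $\lambda>0$ and conclude via Theorem \ref{thm:OC-space}. In fact your treatment is slightly more careful than the paper's own write-up: the paper runs the superadditivity/Fatou step at scale $1$, where $\rho_i(x_1)$ could a priori be infinite and the subtraction in $\rho_i(x_m)\leq\rho_i(x_1)-\rho_i(x_1-x_m)$ would be vacuous, whereas your choice of a scale $\lambda_0$ with $\rho(\lambda_0 x_1)<\infty$ makes that step legitimate, with \eqref{rho-converg} then transporting the conclusion to every $\lambda$ exactly as the paper does.
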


\begin{proof}
	Let $(x_m)\subset({X_\rho})^+$ be such that $x_m\downarrow{0}$ a.e. Define $y=x_1$, $y_m=x_1-x_m$ for all $m\in\mathbb{N}\setminus\{1\}$. Then, by superadditivity of $\rho_i$ we conclude
	\begin{equation}\label{equ:FP=>OC}
	\rho_i(y-y_m)\leq\rho_i(y)-\rho_i(y_m)
	\end{equation}
	for every $m\in\mathbb{N}$ and $i\in\{1,\dots,n-1\}$. Moreover, since $y_m\uparrow{y}$ a.e., by monotonicity of $\rho_i$ we get $\rho_i(y_m)\leq\rho_i(y)$ for all $m\in\mathbb{N}$, $i\in\{1,\dots,n-1\}$ and also $\liminf_{m\rightarrow\infty}y_m\geq{y}$ a.e. Hence, we infer that 
	\begin{equation*}
	\liminf_{m\rightarrow\infty}\rho_i(y_m)\geq{\rho_i(y)},
	\end{equation*}
	and consequently 
	\begin{equation*}
	\lim_{m\rightarrow\infty}\rho_i(y_m)={\rho_i(y)}
	\end{equation*}
	for all $i\in\{1,\dots,n-1\}$. In consequence, since $\rho_i$ satisfies \eqref{rho-converg} for any $i\in\{1,\dots,n-1\}$, by \eqref{equ:FP=>OC} for any $\lambda>0$ it follows that 
	\begin{equation*}
	\rho(\lambda x_m)=\rho(\lambda(y-y_m))=\max_{1\leq i\leq{n-1}}\left\{\rho_i(\lambda(y-y_m))\right\}\rightarrow{0}.
	\end{equation*}
	Finally, by Theorem \ref{thm:OC-space} we conclude that $X_\rho$ is order continuous.
\end{proof}

\begin{theorem}
	Let $f$ be a monotone norm on $\mathbb{R}^n$ and let $X_\rho$ and $\rho_1,\cdots,\rho_{n-1}$ satisfy requirements of Proposition \ref{proposistion:BFS}. The following assertions are satisfied. If $x\in{X_\rho}$ and $\norm{x}{f}{}<1$, then $$\rho(c^{1/s}x)\leq\norm{x}{f}{},$$ where $c=\min_{1\leq i\leq{n}}f(e_i)$ and $(e_i)_{i=1}^n$ is standard basis in $\mathbb{R}^n$.
\end{theorem}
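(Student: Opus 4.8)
The plan is to read off the desired inequality from the infimum that defines $\norm{\cdot}{f}{}$, using monotonicity of $f$ to produce the constant $c$ and $s$-convexity to rescale the modular. First I would fix $\epsilon\in(0,1-\norm{x}{f}{})$, which is possible precisely because $\norm{x}{f}{}<1$, and use the definition of the infimum to pick $k>0$ with
\begin{equation*}
k f\left(e_1+\sum_{i=2}^n\rho_{i-1}\left(\frac{x}{k^{1/s}}\right)e_i\right)<\norm{x}{f}{}+\epsilon<1.
\end{equation*}
Abbreviating $v_k=e_1+\sum_{i=2}^n\rho_{i-1}(x/k^{1/s})e_i$, I note that $v_k$ dominates $e_1$ coordinatewise, so monotonicity of $f$ gives $f(v_k)\geq f(e_1)\geq c$; hence $kf(e_1)<1$ forces $k<1/f(e_1)\leq 1/c$, that is, $ck<1$. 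Securing this bound on $k$ is the technical heart of the argument, and it is exactly where the normalization $\norm{x}{f}{}<1$ and the choice $c=\min_{1\leq i\leq n}f(e_i)$ enter.

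The second step is to extract the modular $\rho$ from $f(v_k)$. For each $i\in\{2,\dots,n\}$ the vector $v_k$ dominates $\rho_{i-1}(x/k^{1/s})e_i$ coordinatewise, so by monotonicity and positive homogeneity of the norm $f$ I get $f(v_k)\geq \rho_{i-1}(x/k^{1/s})f(e_i)\geq c\,\rho_{i-1}(x/k^{1/s})$. Taking the maximum over $i$ and recalling $\rho=\max_{1\leq j\leq n-1}\{\rho_j\}$ yields
\begin{equation*}
f(v_k)\geq c\,\rho\!\left(\frac{x}{k^{1/s}}\right).
\end{equation*}

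The third step rescales the modular by means of $s$-convexity. Since $ck\in(0,1)$, applying condition $(c)$ with the zero element (the same device that proves Lemma \ref{lem1}, but in the complementary direction) gives $\rho_j((ck)^{1/s}y)\leq ck\,\rho_j(y)$ for every $j$; taking $y=x/k^{1/s}$ and using $(ck)^{1/s}\,x/k^{1/s}=c^{1/s}x$ produces $\rho_j(c^{1/s}x)\leq ck\,\rho_j(x/k^{1/s})$, hence $\rho(c^{1/s}x)\leq ck\,\rho(x/k^{1/s})$. Chaining this with the two previous displays I obtain
\begin{equation*}
\rho(c^{1/s}x)\leq ck\,\rho\!\left(\frac{x}{k^{1/s}}\right)\leq k f(v_k)<\norm{x}{f}{}+\epsilon,
\end{equation*}
and letting $\epsilon\to 0^+$ finishes the proof.

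I expect the only genuinely delicate point to be the inequality $ck<1$; once it is in place, both the monotonicity estimate and the $s$-convexity rescaling are routine. A minor point worth recording is that the selected $k$ automatically satisfies $\rho(x/k^{1/s})<\infty$, since $kf(v_k)<1$ forces every coordinate $\rho_{i-1}(x/k^{1/s})$ to be finite, so all the expressions above are well defined.
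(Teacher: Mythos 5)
Your proof is correct and follows essentially the same route as the paper's: pick a near-optimal $k$ from the infimum defining $\norm{x}{f}{}$, use monotonicity and homogeneity of $f$ to obtain both $ck<1$ and $f(v_k)\geq c\,\rho\left(x/k^{1/s}\right)$, rescale via the $s$-convexity inequality $\rho\left((ck)^{1/s}y\right)\leq ck\,\rho(y)$, and let $\epsilon\rightarrow 0^{+}$. The only cosmetic difference is that the paper treats each $\rho_i$ separately (bounding $f$ below by $ck_0\max\{\rho_i(x/k_0^{1/s}),1\}$ and taking the maximum over $i$ at the very end), whereas you pass to $\rho$ immediately; the substance is identical.
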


\begin{proof}
	Let $x\in{X_\rho}$, $\epsilon>0$ and $\norm{x}{f}{}+\epsilon<1$. Then, by definition of $\norm{\cdot}{f}{}$ there exists $k_0>0$ such that
	\begin{equation*}
	k_0f\left(e_1+\sum_{i=2}^n\rho_{i-1}\left(\frac{x}{k_0^{1/s}}\right)e_{i}\right)\leq\norm{x}{f}{}+\epsilon<1.
	\end{equation*}
	Fix $i\in\{1,\dots,n-1\}$. Then, we have
	\begin{equation*}
	k_0f\left(e_1+\sum_{i=2}^n\rho_{i-1}\left(\frac{x}{k_0^{1/s}}\right)e_{i}\right)\geq 	k_0f\left(e_1+\rho_{i}\left(\frac{x}{k_0^{1/s}}\right)e_i\right).
	\end{equation*}
	Next, since $c=\min_{1\leq i\leq{n}}f(e_i)>0$ we obtain
	\begin{equation*}
 	k_0f\left(e_1+\rho_{i}\left(\frac{x}{k_0^{1/s}}\right)e_i\right)\geq{c}k_0\max\left\{\rho_{i}\left(\frac{x}{k_0^{1/s}}\right),1\right\}.
	\end{equation*}	
	Hence, we have
	\begin{equation*}
	ck_0\rho_{i}\left(\frac{x}{k_0^{1/s}}\right)\leq ck_0\max\left\{\rho_{i}\left(\frac{x}{k_0^{1/s}}\right),1\right\}\leq\norm{x}{f}{}+\epsilon<1.
	\end{equation*}	
	Therefore, since $ck_0<1$, by $s$-convexity of $\rho_i$ we get  
	\begin{equation*}
	\rho_{i}\left(c^{1/s}x\right)\leq ck_0\rho_{i}\left(\frac{x}{k_0^{1/s}}\right)\leq\norm{x}{f}{}+\epsilon.
	\end{equation*}	
	Next, since $i\in\{1,\dots,n-1\}$  and $\epsilon>0$ are arbitrary, we obtain $\rho(c^{1/s}x)\leq\norm{x}{f}{}$.
\end{proof}

\begin{proposition}\label{prop:OC:Orlicz}
	Let $f:\mathbb{R}^n\rightarrow[0,\infty)$ be a convex function satisfying \eqref{crucial} and let $\phi_i$ be an Orlicz function and $\rho_i(x)=I_{\phi_i}(|x|)$ for any $i\in\{1,\dots,n-1\}$ and $x\in{X_\rho}$, where $\rho=\max_{1\leq i\leq{n-1}}\{\rho_i\}$. The Orlicz function $\max_{1\leq i\leq{n-1}}\{\phi_i\}$ satisfies $\Delta_2$ condition if and only if the norm $\norm{\cdot}{f}{}$ is order continuous (equivalently $X_\rho=E_\rho=\bigcap_{i=1}^{n-1}E^{\phi_i}$). 
\end{proposition}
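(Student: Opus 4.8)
The plan is to reduce the whole statement to the single Orlicz function $\phi=\max_{1\le i\le n-1}\{\phi_i\}$ and to classical Orlicz-space theory. First I would record that $\phi$ is again an Orlicz function (a finite maximum of even, convex, continuous functions that vanish at $0$ and tend to $\infty$ is of the same type) and establish the two-sided modular estimate
\[
\rho(x)\le I_\phi(|x|)=\int_I\max_{1\le i\le n-1}\phi_i(|x(t)|)\,dt\le(n-1)\rho(x),
\]
valid for every $x\in L^0$, where the left inequality is the pointwise domination $\max_i\phi_i\ge\phi_j$ and the right one is $\max_i\phi_i\le\sum_i\phi_i$. From this estimate it follows immediately that $X_\rho=L^\phi=\bigcap_{i=1}^{n-1}L^{\phi_i}$ and $E_\rho=E^\phi=\bigcap_{i=1}^{n-1}E^{\phi_i}$, and moreover that, for any sequence and any $\lambda>0$, one has $\rho(\lambda x_m)\to0$ if and only if $I_\phi(\lambda x_m)\to0$.

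Next I would invoke the order-continuity criterion of Theorem \ref{thm:OC-space}: the norm $\|\cdot\|_f$ is order continuous if and only if $\rho(\lambda x_m)\to0$ for every $\lambda>0$ whenever $(x_m)\subset(X_\rho)^+$ and $x_m\downarrow0$ a.e. By the modular equivalence of the first step, this is the same as requiring $I_\phi(\lambda x_m)\to0$ for every $\lambda>0$ and every such $x_m$, which is precisely the order-continuity criterion for the Orlicz space $L^\phi$.

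For the implication $\phi\in\Delta_2\Rightarrow\|\cdot\|_f\in(OC)$ I would argue directly. If $\phi\in\Delta_2$ then $L^\phi=E^\phi$, so for a sequence $x_m\downarrow0$ a.e. we have $x_m\le x_1\in E^\phi$, whence $I_\phi(\lambda x_1)<\infty$ for every $\lambda>0$. Since $\phi$ is continuous, $\phi(\lambda x_m(t))\downarrow0$ a.e. and is dominated by the integrable function $\phi(\lambda x_1(\cdot))$, so the Lebesgue Dominated Convergence Theorem gives $I_\phi(\lambda x_m)\to0$; by the criterion this yields order continuity.

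For the converse I would combine the criterion above with the classical characterization recalled in the preliminaries: order continuity of $\|\cdot\|_f$ forces $I_\phi(\lambda x_m)\to0$ for every decreasing null sequence, which is exactly the order-continuity criterion for $L^\phi$, and the Orlicz space $L^\phi$ is order continuous if and only if $\phi\in\Delta_2$. The hard part will be this direction, namely producing, from the failure of $\Delta_2$, a sequence $x_m\downarrow0$ whose modular stays bounded away from $0$; I would bypass the explicit construction by citing the classical equivalence $L^\phi\in(OC)\Leftrightarrow\phi\in\Delta_2$. Finally, the parenthetical identity $X_\rho=E_\rho=\bigcap_{i=1}^{n-1}E^{\phi_i}$ is read off from $\phi\in\Delta_2\Leftrightarrow L^\phi=E^\phi$ together with the identifications $X_\rho=L^\phi$ and $E_\rho=E^\phi=\bigcap_{i=1}^{n-1}E^{\phi_i}$ obtained in the first step.
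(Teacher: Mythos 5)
Your proof is correct and follows the same overall skeleton as the paper's: reduce order continuity of $\|\cdot\|_f$ to the modular criterion of Theorem \ref{thm:OC-space}, identify $X_\rho$ with the Orlicz space $L^\phi$ for $\phi=\max_{1\le i\le n-1}\phi_i$, prove sufficiency by dominated convergence, and dispose of necessity by the classical equivalence $L^\phi\in(OC)\Leftrightarrow\phi\in\Delta_2$ (the paper cites Theorem 10.3 of Krasnosel'ski\u{\i}--Ruticki\u{\i} for exactly this, so your ``bypass'' is not a gap). The one genuine difference is how the identification is established: the paper quotes Hudzik's intersection theorem $\bigcap_{i=1}^{n-1}L^{\phi_i}=L^\phi$ and proves only the one-sided pointwise bound $\max_i I_{\phi_i}(\lambda x)\le I_\phi(\lambda x)$ to get $E^\phi\subset E_\rho$, whereas you prove the two-sided modular estimate $\rho\le I_\phi\le (n-1)\rho$ elementarily (correct, since $\max_i\phi_i\le\sum_i\phi_i$ pointwise). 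Your estimate buys more than the paper's citations: it yields $X_\rho=L^\phi$ and $E_\rho=E^\phi$ at once and, crucially, the equivalence $\rho(\lambda x_m)\to 0\Leftrightarrow I_\phi(\lambda x_m)\to 0$, which is the step linking the criterion of Theorem \ref{thm:OC-space} to order continuity of $L^\phi$ and which the paper's proof leaves implicit; it also makes the argument self-contained. One caveat you share with the paper: Theorem \ref{thm:OC-space} is stated for $f$ a norm satisfying \eqref{crucial} and for data satisfying Proposition \ref{proposistion:BFS}, while the proposition assumes only that $f$ is convex with \eqref{crucial}; since the paper's own proof invokes that theorem in the same way, this is a defect of the paper's formulation rather than of your argument.
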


\begin{proof}	
	First, by Theorem \ref{thm:OC-space} we observe that the norm $\norm{\cdot}{f}{}$ is order continuous if and only if for any $(x_m)\subset({X_\rho})^+$, $x\in({X_\rho})^+$ such that $x_m\leq{x}$ a.e. for any $m\in\mathbb{N}$ and $x_m\rightarrow{0}$ a.e. we have $\rho(\lambda{x_m})\rightarrow{0}$ for all $\lambda>0$. So, equivalently we may write that for any $(x_m)\subset({X_\rho})^+$, $x\in({X_\rho})^+$ such that $x_m\leq{x}$ a.e. for any $m\in\mathbb{N}$ and $x_m\rightarrow{0}$ a.e., we get $I_{\phi_i}(\lambda{x_m})\rightarrow{0}$ for any $i\in\{1,\dots,n-1\}$ and $\lambda>0$. Let $\phi=\max_{1\leq i\leq{n-1}}\{\phi_{i}\}$. Then, in view of Theorem 10.3 \cite{KraRut} it follows that the Orlicz space $L^{\phi}=E^{\phi}$ (i.e. $L^{\phi}$ is order continuous) if and only if $\phi$ satisfies $\Delta_2$ condition. Next, by Theorem 1 \cite{Hudz} it is easy to notice that
	\begin{equation*}
	X_\rho=\left\{x\in{L^0}:\max_{1\leq{}i\leq{n-1}}\{I_{\phi_{i}}(\lambda{x})\}<\infty\textnormal{ for some }\lambda>0\right\}=\bigcap_{i=1}^{n-1}L^{\phi_i}=L^\phi=E^\phi.
	\end{equation*}
	Moreover, for any $\lambda>0$ and $x\in{E^\phi}$ it is easy to observe that 
	\begin{align*}
	\max_{1\leq i\leq{n-1}}\{I_{\phi_i}(\lambda x)\}\leq\int_I\max_{1\leq i\leq{n-1}}\{\phi_i(\lambda x)\}d\mu={I_\phi(\lambda x)}<\infty,
	\end{align*}
	whence 
	\begin{equation*}
	E^\phi\subset\bigcap_{i=1}^{n-1}E^{\phi_i}=\left\{x\in{L^0}:\max_{1\leq{}i\leq{n-1}}\{I_{\phi_{i}}(\lambda{x})\}<\infty\textnormal{ for all }\lambda>0\right\}=E_\rho.
	\end{equation*}
	In consequence, since $E^{\phi_i}\subset{L^{\phi_i}}$ for any $i\in\{1,\dots,n-1\}$, we have
	\begin{equation*}
	X_\rho=E_\rho=\bigcap_{i=1}^{n-1}E^{\phi_i}.
	\end{equation*}
	Finally, for every $(x_m)\subset({X_\rho})^+$, $x\in({X_\rho})^+$ such that $x_m\leq{x}$ a.e. for any $m\in\mathbb{N}$ and $x_m\rightarrow{0}$ a.e., by Lebesgue Dominated Convergence Theorem we have $I_{\phi_i}(\lambda{x_m})\rightarrow{0}$ for any $\lambda>0$ and consequently $\norm{x_m}{f}{}\rightarrow{0}$.
\end{proof}

\section{Monotonicity properties of banach function space $X_\rho$}

This section is devoted to monotonicity properties in $s$-Banach function space. We start our investigation with a characterization of strict monotonicity. In this section we assume unless we say otherwise that $X\rho$ is an $s$-Banach function space satisfying requirements of Proposition \ref{proposistion:BFS}.

\begin{theorem}\label{thm:SM}
	Let $f:\mathbb{R}^n\rightarrow[0,\infty)$ be a convex function that is strictly monotone on $(\mathbb{R}^n)^+$ and let $(X_\rho,\norm{\cdot}{f}{})$ be regular (see Definition \ref{def:regular}). If there is $j\in\{1,\dots,n-1\}$ such that for any $x,y\in({X_\rho})^+$ with $x\leq{y}$, $x\neq y$ we have $\rho_j(\lambda x)<\rho_j(\lambda y)$ whenever $\rho_j(\lambda y)<\infty$ for $\lambda>0$, then the space $X_\rho$ equipped with the norm $\norm{\cdot}{f}{}$ is strictly monotone.
\end{theorem}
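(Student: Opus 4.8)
The plan is to verify directly that every $x\in(X_\rho)^+$ is a $UM$ point, i.e.\ that $\|x\|_f<\|y\|_f$ for every $y\in(X_\rho)^+$ with $x\le y$ and $x\neq y$; by the definition recalled in Section~2 this is exactly what it means for $X_\rho$ to be strictly monotone. So I fix such a pair $x\le y$, $x\neq y$. Since $\|\cdot\|_f$ is a finite-valued norm on $X_\rho$ and $(X_\rho,\|\cdot\|_f)$ is regular, Theorem~\ref{thm:regular}$(i)$ supplies a minimizer $k_0>0$ with
$$
\|y\|_f=k_0f\left(e_1+\sum_{i=2}^n\rho_{i-1}\left(\frac{y}{k_0^{1/s}}\right)e_i\right).
$$
Because this number is finite while $f$ is real-valued, every coordinate $\rho_{i-1}(y/k_0^{1/s})$ must be finite; in particular $\rho_j(y/k_0^{1/s})<\infty$ for the distinguished index $j$.

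First I would record the ordinary monotonicity of the modulars: since $0\le x\le y$ a.e., condition $(i)$ of Proposition~\ref{proposistion:BFS} gives $\rho_i(\lambda x)\le\rho_i(\lambda y)$ for every $i$ and every $\lambda>0$. Applying this with $\lambda=k_0^{-1/s}$ shows that the vectors
$$
u=e_1+\sum_{i=2}^n\rho_{i-1}\left(\frac{x}{k_0^{1/s}}\right)e_i,\qquad
v=e_1+\sum_{i=2}^n\rho_{i-1}\left(\frac{y}{k_0^{1/s}}\right)e_i
$$
satisfy $u\le v$ coordinatewise in $(\mathbb{R}^n)^+$, with equal (namely unit) first coordinate. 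The crucial step is to upgrade this to a strict inequality in at least one coordinate: invoking the hypothesis on $\rho_j$ with $\lambda=k_0^{-1/s}$, which is legitimate precisely because $\rho_j(y/k_0^{1/s})<\infty$, yields $\rho_j(x/k_0^{1/s})<\rho_j(y/k_0^{1/s})$. Thus the $(j+1)$-st coordinate of $u$ is strictly smaller than that of $v$, so that $u\le v$, $u\neq v$ in $(\mathbb{R}^n)^+$.

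Now strict monotonicity of $f$ on $(\mathbb{R}^n)^+$ gives $f(u)<f(v)$. Combining this with the defining infimum for $\|x\|_f$, which bounds it above by the value of the functional at the particular parameter $k_0$, I obtain
$$
\|x\|_f\le k_0f(u)<k_0f(v)=\|y\|_f,
$$
which is exactly the strict inequality required. Since the pair $x\le y$ was arbitrary, this establishes that each element of $(X_\rho)^+$ is simultaneously a $UM$ and $LM$ point, hence $X_\rho\in(SM)$.

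I expect the only delicate point to be the justification that the coordinates $\rho_{i-1}(y/k_0^{1/s})$ are finite at the minimizer $k_0$ produced by regularity: this finiteness is what makes both the hypothesis on $\rho_j$ applicable \emph{and} the strict monotonicity of $f$ usable with genuine finite arguments, thereby converting the modular gap into a strict gap of the norm. Everything else is a short chain — modular monotonicity, the strict $\rho_j$-inequality, and strict monotonicity of $f$ — so the real content sits in exploiting regularity to attain the infimum for $y$ and then testing $\|x\|_f$ at the \emph{same} scaling parameter.
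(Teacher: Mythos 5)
Your proof is correct and follows essentially the same route as the paper: use regularity (Theorem \ref{thm:regular}$(i)$) to attain the infimum for $y$ at some $k_0$, apply the modular monotonicity together with the strict $\rho_j$-hypothesis at $\lambda=k_0^{-1/s}$, and then exploit strict monotonicity of $f$ plus the defining infimum for $\|x\|_f$ tested at the same $k_0$. Your explicit remark that finiteness of the norm forces all coordinates $\rho_{i-1}(y/k_0^{1/s})$ to be finite is only implicit in the paper (which phrases the strict inequality as holding for all $k$ with $\rho(y/k^{1/s})<\infty$), but the argument is the same.
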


\begin{proof}
	 Let $x,y\in({X_\rho})^+$ be such that $x\leq{y}$, $x\neq{y}$ a.e. Clearly, we have $\rho_i(\lambda x)\leq\rho_i(\lambda y)$ for all $\lambda>0$ and $i\in\{1,\dots,n-1\}$. Moreover, by assumption there is $j\in\{1,\dots,n-1\}$ such that if $\rho_j(\lambda y)<\infty$ for $\lambda>0$, then $\rho_j(\lambda x)<\rho_j(\lambda y)$. Hence, by strict monotonicity of $f$ we get
	 \begin{equation}\label{equ:SM}
	 kf\left(e_1+\sum_{i=2}^n\rho_{i-1}\left(\frac{x}{k^{1/s}}\right)e_i\right)<kf\left(e_1+\sum_{i=2}^n\rho_{i-1}\left(\frac{y}{k^{1/s}}\right)e_i\right)
	 \end{equation}
	 for any $k>0$ such that $\rho(y/k^{1/s})<\infty$. Moreover, since $(X_\rho,\norm{\cdot}{f}{})$ is regular, by Theorem \ref{thm:regular}, there exists $k_0>0$ such that 
	 \begin{equation*}
	 \norm{y}{f}{}= k_0f\left(e_1+\sum_{i=2}^n\rho_{i-1}\left(\frac{y}{k_0^{1/s}}\right)e_i\right).
	 \end{equation*}
	 In consequence, by definition of the norm $\norm{\cdot}{f}{}$ and by \eqref{equ:SM} we obtain
	 \begin{align*}
	 \norm{x}{f}{}&\leq{}k_0f\left(e_1+\sum_{i=2}^n\rho_{i-1}\left(\frac{x}{k_0^{1/s}}\right)e_i\right)\\
	 &<k_0f\left(e_1+\sum_{i=2}^n\rho_{i-1}\left(\frac{y}{k_0^{1/s}}\right)e_i\right)=\norm{y}{f}{},
	 \end{align*}
	 which proves that $X_\rho$ is strictly monotone.
\end{proof}

We present the all details of the proof of the following proposition for the sake of completeness and reader's convenience. Let us mention that in some part of the below proof we use similar techniques to Lemma 2.3 in \cite{CuHuWi}.   

\begin{proposition}\label{prop:modul:mono}
	Let $f$ be a monotone norm on $\mathbb{R}^n$. Let $\rho_i$ be a convex, left-continuous, superadditive and monotone semimodular on $X$ for any $i\in\{1,\dots,n-1\}$. Then, for any $x,y\in{X_\rho}$ such that $\norm{y}{f}{}=1$ and $0\leq{x}\leq{y}$ a.e. we have $\rho(cx)\leq{1}$ and
	\begin{equation*}
	\norm{y-x}{f}{}\leq{1}-\delta_f(\rho(cx)),
	\end{equation*}
	where $c=\min_{1\leq i\leq{n}}\{f(e_i)\}$, $(e_i)_{i=1}^n$ elements of standard basis in $\mathbb{R}^n$, and $\delta_f$ is a modulus of monotonicity of a function $f$ given by
	\begin{equation*}
	\delta_f(\epsilon)=\inf\left\{1-f(u-v):u,v\in(\mathbb{R}^n)^+,u\leq{v},f(v)=1,f(u)\geq\epsilon\right\},
	\end{equation*}
	for any $\epsilon\in[0,1]$.
\end{proposition}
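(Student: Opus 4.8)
The plan is to reduce the function-space monotonicity to the finite-dimensional modulus $\delta_f$ by encoding the semimodular values of $y$ and of $y-x$ as vectors in $(\mathbb{R}^n)^+$. Throughout write $z=y-x\ge 0$ and $V_w(k)=e_1+\sum_{i=2}^n\rho_{i-1}(w/k)e_i$ for $w\in X_\rho$, so that $\norm{w}{f}{}=\inf_{k>0}kf(V_w(k))$; note that here $\rho_i$ is convex, i.e. $s=1$. First I would settle the auxiliary bound $\rho(cx)\le 1$. Since $\norm{\cdot}{f}{}$ is monotone (Proposition \ref{proposistion:BFS}) and $0\le x\le y$, we have $\norm{x}{f}{}\le\norm{y}{f}{}=1$. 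For $t\in(0,1)$ the element $tx$ satisfies $\norm{tx}{f}{}=t\norm{x}{f}{}<1$, so the theorem preceding this proposition gives $\rho(ctx)\le\norm{tx}{f}{}<1$. Letting $t\uparrow 1$ and using that each $\rho_i$, hence $\rho=\max_i\rho_i$, is left-continuous yields $\rho(cx)\le 1$, so $\delta_f(\rho(cx))$ is well defined.

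For the main estimate I would first produce an honest minimizer of the defining infimum. The map $k\mapsto kf(V_y(k))$ is coercive (it is $\ge kf(e_1)\to\infty$) and, because each $\rho_i(\lambda\,\cdot)$ is nondecreasing and left-continuous in $\lambda$, it is lower semicontinuous on $(0,\infty)$; hence there is $k_0>0$ with $\norm{y}{f}{}=k_0f(V_y(k_0))=1$ and $\rho(y/k_0)<\infty$. Put
\[
v=k_0 e_1+\sum_{i=2}^n k_0\rho_{i-1}(y/k_0)e_i,\qquad w=k_0 e_1+\sum_{i=2}^n k_0\rho_{i-1}(z/k_0)e_i,
\]
so that $f(v)=1$ and, by definition of the norm, $\norm{y-x}{f}{}\le k_0 f(V_z(k_0))=f(w)$. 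Setting $u=v-w$, superadditivity of each $\rho_{i-1}$ applied to $y/k_0=x/k_0+z/k_0$ gives $\rho_{i-1}(y/k_0)\ge\rho_{i-1}(x/k_0)+\rho_{i-1}(z/k_0)$, so the $i$-th coordinate of $u$ is at least $k_0\rho_{i-1}(x/k_0)\ge 0$; thus $0\le u\le v$ and $w=v-u\ge 0$.

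It then remains to bound $f(u)$ below by $\rho(cx)$. By the coordinate estimate above and monotonicity of $f$, for each $i$ we have $f(u)\ge f\big(k_0\rho_{i-1}(x/k_0)e_i\big)=k_0\rho_{i-1}(x/k_0)f(e_i)\ge c\,k_0\rho_{i-1}(x/k_0)$, and maximizing over $i$ gives $f(u)\ge c\,k_0\rho(x/k_0)$. Since $v\ge k_0 e_1$ forces $ck_0\le f(e_1)k_0\le f(v)=1$, convexity of $\rho$ with $\rho(0)=0$ yields $\rho(cx)=\rho\big((ck_0)(x/k_0)\big)\le ck_0\rho(x/k_0)\le f(u)$. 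Now $u,v\in(\mathbb{R}^n)^+$ satisfy $0\le u\le v$, $f(v)=1$ and $f(u)\ge\rho(cx)$, so the definition of $\delta_f$ gives $f(v-u)\le 1-\delta_f(\rho(cx))$. Combining with $\norm{y-x}{f}{}\le f(w)=f(v-u)$ finishes the argument.

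The main obstacle is the attainment step: without regularity one only has $\norm{y}{f}{}$ as an infimum, and the entire reduction hinges on an exact minimizer $k_0$, since both the normalization $f(v)=1$ and the inequality $ck_0\le 1$ that powers the convexity estimate are needed in their sharp form. I expect to justify attainment through the lower semicontinuity coming from left-continuity of the $\rho_i$ (parallel to Theorem \ref{thm:regular}). Should attainment be unavailable, the fallback is to take a minimizing sequence $k_m$ with $k_mf(V_y(k_m))\to 1$, repeat the construction after rescaling each $v_m$ by $f(v_m)$, and pass to the limit; this variant additionally calls for the continuity of $\delta_f$, which is routine in the present finite-dimensional setting by a compactness argument.
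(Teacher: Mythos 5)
Your reduction to the finite-dimensional modulus $\delta_f$ --- the vectors $v,w$, $u=v-w$, superadditivity giving $u\geq k_0\rho_{i-1}(x/k_0)e_i$ coordinatewise, and the normalization into the definition of $\delta_f$ --- is exactly the skeleton of the paper's proof, and your derivation of $\rho(cx)\leq 1$ from the Section 4 theorem plus left-continuity is fine. The genuine gap is the attainment step, and it is not a removable technicality: under the hypotheses of this proposition the infimum defining $\norm{y}{f}{}$ need not be attained at any $k_0>0$. Lower semicontinuity plus coercivity only controls $k\rightarrow\infty$; nothing prevents a minimizing sequence from escaping to $k\rightarrow 0^+$, where the interval $(0,\infty)$ has no compactness. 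Concretely, take $n=2$, $f$ the $\ell_1$-norm on $\mathbb{R}^2$, and $\rho_1=I_\phi$ with $\phi(t)=|t|$; this $\rho_1$ is a convex, continuous, monotone and superadditive (indeed additive on nonnegative functions) semimodular, yet $kf\left(e_1+\rho_1(y/k)e_2\right)=k+\norm{y}{L^1}{}$ is strictly increasing in $k$, so $\norm{y}{f}{}=\norm{y}{L^1}{}$ is an unattained infimum for every $y\neq 0$. This failure mode is precisely why the paper introduces regularity (Definition \ref{def:regular}) and why Theorem \ref{thm:regular}$(iii)$ is a dichotomy; the present proposition assumes no regularity, so any proof resting on an exact minimizer is invalid.

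Your fallback paragraph points at the correct repair, and it is in fact the paper's actual argument: the paper takes an $\epsilon$-near-minimizer $k_0$ (so $ck_0\leq k_0f(e_1+\cdots)\leq 1+\epsilon$), builds the same $v$ and a one-coordinate $u=k_0\rho_j(x/k_0)e_j$, bounds $\norm{y-x}{f}{}\leq f(v-u)$ via superadditivity and monotonicity exactly as you do, then --- and this is the detail your sketch omits --- uses the nonincreasing map $t\mapsto t\rho_j(cx/t)$ (Lemma \ref{lem1}, equivalently convexity with the factor $ck_0/(1+\epsilon)\leq 1$) so that the lower bound survives the rescaling: $f(u)/(1+\epsilon)\geq\rho_j\left(cx/(1+\epsilon)\right)$, whence $\norm{y-x}{f}{}\leq(1+\epsilon)\left[1-\delta_f\left(\rho_j\left(cx/(1+\epsilon)\right)\right)\right]$; finally $\epsilon\rightarrow 0$ uses left-continuity of $\rho_j$ together with left-continuity of $\delta_f$ (the finite-dimensional compactness fact you correctly flag, and which the paper leaves implicit). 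Note that after rescaling the argument of $\rho_j$ necessarily becomes $cx/(1+\epsilon)$ rather than $cx$, so the limiting step is genuinely needed, not cosmetic. Since your primary route fails and this repair is left as a two-sentence sketch with the key rescaled inequality unproved, the proposal as written is incomplete; carried out in full, it would coincide with the paper's proof.
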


\begin{proof}
	First, we easily observe that $c=\min_{1\leq i\leq{n}}\{f(e_i)\}>0$. Then, by definition of a norm $\norm{\cdot}{f}{}$ there is $k_0>0$ such that 
	\begin{align}\label{equ:delta}
	ck_0\leq k_0f(e_1)&\leq k_0f\left(e_1+\sum_{i=2}^n\rho_{i-1}\left(\frac{y}{k_0}\right)e_i\right)\\
	&\leq\norm{y}{f}{}+\epsilon=1+\epsilon.\nonumber
	\end{align}
	Fix $j\in\{1,\dots,n-1\}$. For simplicity of our notation we define 
	\begin{equation*}
	v=k_0e_1+\sum_{k=2}^{n}k_0\rho_{k-1}\left(\frac{y}{k_0}\right)e_k,\quad\textnormal{and}\quad u=k_0\rho_j\left(\frac{x}{k_0}\right)e_j.
	\end{equation*}
	Then, since a mapping $k\rho_i(x/k)$ is nonincreasing with respect $k>0$ for any $i\{1,\dots,n-1\}$, by monotonicity of $f$ we get
	\begin{align}\label{equ2:delta}
	f(v)&\geq f\left(\sum_{i=2}^nk_0\rho_{i-1}\left(\frac{x}{k_0}\right)e_i\right)\geq f(u)=k_0\rho_j\left(\frac{x}{k_0}\right)f(e_j)\\
	&\geq ck_0\rho_j\left(\frac{cx}{ck_0}\right)\geq(1+\epsilon)\rho_j\left(\frac{cx}{1+\epsilon}\right).\nonumber
	\end{align}
	Moreover, since $\rho_i$ is superadditive for any $i\in\{1,\dots,n-1\}$ we obtain
	\begin{align*}
	\norm{y-x}{f}{}&\leq k_0f\left(e_1+\sum_{k=2}^{n}\rho_{k-1}\left(\frac{y-x}{k_0}\right)e_k\right)\\
	&\leq k_0f\left(e_1+\sum_{k=2}^{n}\rho_{k-1}\left(\frac{y}{k_0}\right)e_k-\rho_{k-1}\left(\frac{x}{k_0}\right)e_k\right)\\
	&= f\left(v-\sum_{k=2}^{n}k_0\rho_{k-1}\left(\frac{x}{k_0}\right)e_k\right).
	\end{align*}	
	Consequently, since $u\leq \sum_{k=2}^{n}k_0\rho_{k-1}\left(\frac{x}{k_0}\right)e_k\leq{v}$, by monotonicity of $f$ and by monotonicity of $\rho_i$ for any $i\in\{1,\dots,n-1\}$ we conclude 
	\begin{equation*}
	\norm{y-x}{f}{}\leq f\left(v-\sum_{k=2}^{n}k_0\rho_{k-1}\left(\frac{x}{k_0}\right)e_k\right)\leq f\left(v-u\right).
	\end{equation*}
	Next, since $u\leq{v}$, by \eqref{equ:delta} and \eqref{equ2:delta} it is easy to see that 
	\begin{equation*}
	1\geq f\left(\frac{v}{1+\epsilon}\right)\geq f\left(\frac{u}{1+\epsilon}\right)\geq{\rho_j\left(\frac{cx}{1+\epsilon}\right)}.
	\end{equation*}
	Therefore, by definition of the modulus of monotonicity $\delta_f$ we have
	\begin{equation*}
	\norm{y-x}{f}{}\leq f\left(v-u\right)\leq(1+\epsilon)\left\{1-\delta_f\left(\rho_j\left(\frac{cx}{1+\epsilon}\right)\right)\right\}.
	\end{equation*}
	Finally, since $\epsilon>0$ and $j\in\{1,\dots,n-1\}$ are arbitrary, by left-continuity of $\rho_i$ for any $i\in\{1,\dots,n-1\}$, we observe that
	\begin{equation*}
	\rho(cx)\leq{1}\quad\textnormal{and}\quad\norm{y-x}{f}{}\leq 1-\delta_f\left(\rho\left({cx}\right)\right).
	\end{equation*}
\end{proof}

\begin{theorem}\label{thm:UM}
	Let $f$ be a norm on $\mathbb{R}^n$ that is uniformly monotone. Let $\rho_i$ be a convex, left-continuous, superadditive and monotone semimodular on $X$ for any $i\in\{1,\dots,n-1\}$. If $\rho$ satisfies \eqref{rho-converg}, then the space $X_\rho$ equipped with the norm $\norm{\cdot}{f}{}$ is uniformly monotone. 
\end{theorem}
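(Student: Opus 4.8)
The plan is to deduce the uniform monotonicity of $(X_\rho,\norm{\cdot}{f}{})$ directly from the modular estimate in Proposition \ref{prop:modul:mono}, the whole problem being reduced to producing a positive lower bound for $\rho(cx)$ in terms of $\norm{x}{f}{}$. Recall first that, since $f$ is a norm, $c=\min_{1\leq i\leq n}f(e_i)>0$, and that saying the norm $f$ on $\mathbb{R}^n$ is uniformly monotone is precisely saying that its modulus $\delta_f$ from Proposition \ref{prop:modul:mono} satisfies $\delta_f(\epsilon)>0$ for every $\epsilon\in(0,1]$. Moreover $\delta_f$ is nondecreasing, since enlarging $\epsilon$ shrinks the admissible set of pairs $(u,v)$ over which the defining infimum is taken.

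Now fix $\epsilon\in(0,1]$. I would first establish the key estimate
\[
\eta_\epsilon:=\inf\left\{\rho(cx):x\in(X_\rho)^+,\ \epsilon\leq\norm{x}{f}{}\leq 1\right\}>0.
\]
Note $\eta_\epsilon\leq 1$, since applying Proposition \ref{prop:modul:mono} with $y=x$ for a normalized admissible $x$ yields $\rho(cx)\leq 1$. To see $\eta_\epsilon>0$, I argue by contradiction: if $\eta_\epsilon=0$ there is $(x_m)\subset(X_\rho)^+$ with $\norm{x_m}{f}{}\geq\epsilon$ and $\rho(cx_m)\to 0$. Since $c>0$, this gives $\rho(\lambda x_m)\to 0$ for $\lambda=c$, so by Remark \ref{rem:rho-converg} (using that $\rho$ satisfies \eqref{rho-converg}, together with Theorem 1.10 in \cite{Mus} and Corollary \ref{equival:f&A}, which make $\rho$-convergence equivalent to $\norm{\cdot}{f}{}$-convergence) we obtain $\norm{x_m}{f}{}\to 0$. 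This contradicts $\norm{x_m}{f}{}\geq\epsilon>0$ and proves the claim.

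Having $\eta_\epsilon\in(0,1]$, I would set $\delta_\epsilon:=\tfrac12\delta_f(\eta_\epsilon)\in(0,1]$, positive by uniform monotonicity of $f$. Take any $x,y\in(X_\rho)^+$ with $x\leq y$, $\norm{y}{f}{}=1$ and $\norm{x}{f}{}\geq\epsilon$; by monotonicity of the norm $\norm{x}{f}{}\leq\norm{y}{f}{}=1$, so $x$ is admissible in the infimum defining $\eta_\epsilon$ and hence $\rho(cx)\geq\eta_\epsilon$. Proposition \ref{prop:modul:mono} then gives $\rho(cx)\leq 1$ and
\[
\norm{y-x}{f}{}\leq 1-\delta_f(\rho(cx))\leq 1-\delta_f(\eta_\epsilon)=1-2\delta_\epsilon<1-\delta_\epsilon,
\]
where the middle inequality uses that $\delta_f$ is nondecreasing. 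This is exactly the defining condition of $(UM)$ for the chosen $\delta_\epsilon$, so $X_\rho$ equipped with $\norm{\cdot}{f}{}$ is uniformly monotone.

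The main obstacle is the positivity of $\eta_\epsilon$: this is the only place where the hypothesis \eqref{rho-converg} enters, and it is what converts a uniform gap in the $\norm{\cdot}{f}{}$-norm into a uniform gap at the modular level, so that the finite-dimensional uniform monotonicity of $f$ can be brought to bear through $\delta_f$. The remaining ingredients---monotonicity of $\delta_f$, the estimate of Proposition \ref{prop:modul:mono}, and the elementary bound $\norm{x}{f}{}\le\norm{y}{f}{}$---are routine.
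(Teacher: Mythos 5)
Your proof is correct and follows essentially the same route as the paper's: both reduce uniform monotonicity to the positivity of the modular infimum $\inf\{\rho(cz):\norm{z}{f}{}\geq\epsilon\}$ (your $\eta_\epsilon$, the paper's $\psi_f(c,\epsilon)$), established by contradiction via \eqref{rho-converg} and Remark \ref{rem:rho-converg}, and then conclude through Proposition \ref{prop:modul:mono} and the monotonicity of $\delta_f$. Your write-up is in fact slightly more careful on two small points the paper glosses over: you justify that $\delta_f$ is nondecreasing, and you halve $\delta_f(\eta_\epsilon)$ so as to obtain the strict inequality $\norm{y-x}{f}{}<1-\delta_\epsilon$ demanded by the definition of $(UM)$.
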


\begin{proof}
	Let $\epsilon\in(0,1]$ and let $x,y\in(X_\rho)^+$ be such that $x\leq{y}$ a.e. and $\norm{y}{f}{}=1$, $\norm{x}{f}{}\geq\epsilon$. We claim that for any $\epsilon>0$ and for any $\lambda>0$ we have 
	$$\psi_f(\lambda,\epsilon)=\inf\{\rho(\lambda z):z\in{X_\rho}, \norm{z}{f}{}\geq\epsilon\}>0.$$
	Indeed, if we suppose for a contrary that it is not true, then there exist $\epsilon_1>0$ and $\lambda_1>0$ such that $\psi_f(\lambda_1,\epsilon_1)=0$. Hence, there exists $(x_m)\subset{X_\rho}$ such that $\rho(\lambda_1 x_m)<\frac{1}{m}$ and $\norm{x_m}{f}{}\geq\epsilon_1$ for all $m\in\mathbb{N}$. Consequently, since $\rho$ satisfies \eqref{rho-converg}, it follows that $\rho(\lambda{x_m})\rightarrow{0}$ for any $\lambda>0$. In consequence, by Remark \ref{rem:rho-converg} we get a contradiction, which proves the claim. Next, since $\rho_i$ is left-continuous and superadditive for any $i\in\{1,\dots,n-1\}$, by Proposition \ref{prop:modul:mono} it follows that $1\geq\rho(cx)\geq{\psi_f(c,\epsilon)}>0$ and
	\begin{equation*}
	\norm{y-x}{f}{}\leq 1-\delta_f(\rho(cx))\leq 1-\delta_f(\psi_f(c,\epsilon)).
	\end{equation*}
	Therefore, by assumption that $f$ is uniformly monotone we have $$1\geq\delta_f(\psi_f(c,\epsilon))>0$$ 
	for any $\epsilon\in(0,1]$ and $c>0$, which shows that $\norm{\cdot}{f}{}$ is uniformly monotone.
\end{proof}

Now, we present an example of semimodular spaces that are uniformly monotone.  

\begin{example}
	Let $f:\mathbb{R}^n\rightarrow[0,\infty)$ be uniformly monotone norm, $\phi_{i}$ be an Orlicz function satisfying $\Delta_2$ condition and let $w_i\in{D_{\phi_i}}$ be a positive weight function for any $i\in\{1,\dots,n-1\}$ (for more details please see \eqref{necessar:cond:2}).
	Defining $\rho$ and $\rho_i$ for any  $i\in\{1,\dots,n-1\}$ as in Example \ref{example:Cesaro:OC} we infer that $\rho_i$ is convex, continuous, superadditive and monotone semimodular for any $i\in\{1,\dots,n-1\}$ and   
	\begin{align*}
	X_\rho&=\bigcap_{i=1}^{n-1}C_{\phi_i,w_i}\neq\{0\}.
	\end{align*}	
	Next, we observe that $\rho$ satisfies \eqref{rho-converg}. So, by Theorem \ref{thm:UM} we obtain $X_\rho$ equipped with $\norm{\cdot}{f}{}$ is uniformly monotone.
\end{example}

\begin{proposition}\label{prop:NOT:SM}
	Let $f:\mathbb{R}^n\rightarrow[0,\infty)$ be a convex function satisfying \eqref{crucial} and let $\phi_i$ be an Orlicz function and $\rho_i(x)=I_{\phi_i}(|x|)$ for any $i\in\{1,\dots,n-1\}$ and $x\in{X_\rho}$, where $\rho=\max_{1\leq i\leq{n-1}}\{\rho_i\}$. If each $\phi_i$ vanishes outside zero, then $X_\rho$ equipped with the norm $\norm{\cdot}{f}{}$ is not strictly monotone.
\end{proposition}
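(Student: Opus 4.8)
The plan is to disprove strict monotonicity by producing $x,y\in(X_\rho)^+$ with $x\le y$, $x\neq y$ and $\norm{x}{f}{}=\norm{y}{f}{}$. Since $\norm{\cdot}{f}{}$ is monotone (this was established inside the proof of Proposition \ref{proposistion:BFS}), we always have $\norm{x}{f}{}\le\norm{y}{f}{}$, so it suffices to exhibit such a pair with $\norm{y}{f}{}\le\norm{x}{f}{}$. Write $a_{\phi_i}=\sup\{t>0:\phi_i(t)=0\}$; the hypothesis that each $\phi_i$ vanishes on a neighbourhood of the origin means $a_{\phi_i}>0$, so set $a=\min_{1\le i\le n-1}a_{\phi_i}>0$. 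By continuity and monotonicity of each Orlicz function, $\phi_i(t)=0$ for all $t\in[0,a]$ and all $i$. The guiding idea is to glue onto a carefully chosen base function $x$ a disjointly supported ``bump'' whose height is calibrated to the scale $k_0$ realizing $\norm{x}{f}{}$, so that at that scale every $\rho_i$ fails to detect it.

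First I would secure a genuine minimizing scale. Put $\phi=\max_{1\le i\le n-1}\phi_i$; then $\phi$ is an Orlicz function with $a_\phi=a>0$, hence $\phi\notin\Delta_2$ (recall $\psi\in\Delta_2$ forces $a_\psi=0$). By Proposition \ref{prop:OC:Orlicz} this gives $X_\rho\neq E_\rho$. Starting from any $x_0\in X_\rho\setminus E_\rho$, so that $\int_I\phi_j(\lambda_2|x_0|)\,d\mu=\infty$ for some $j$ and some $\lambda_2>0$, I localize the divergence to a set $S$ with $\mu(I\setminus S)>0$ (split $I$ into countably many disjoint pieces of positive measure; either one piece already carries an infinite integral, or deleting one piece of finite integral leaves the total infinite) and replace $x_0$ by $x=|x_0|\chi_S$. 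Then $x\le|x_0|$ gives $x\in X_\rho$, while $\rho_j(\lambda_2 x)\ge\int_S\phi_j(\lambda_2|x_0|)\,d\mu=\infty$ gives $x\notin E_\rho$, and $x$ vanishes on the positive-measure set $I\setminus S$. Because $x\in X_\rho\setminus E_\rho$, Theorem \ref{thm:regular}$(ii)$ furnishes $k_0>0$ with
\begin{equation*}
\norm{x}{f}{}=k_0f\left(e_1+\sum_{i=2}^n\rho_{i-1}\left(\frac{x}{k_0^{1/s}}\right)e_i\right).
\end{equation*}

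Next I would build the bump. Choose $B\subseteq I\setminus S$ with $0<\mu(B)<\infty$ and set $z=a\,k_0^{1/s}\chi_B$ and $y=x+z$; then $z$ is bounded with finite-measure support, so $z\in X_\rho$ and $y\in(X_\rho)^+$, and $x\le y$, $x\neq y$ since $a\,k_0^{1/s}>0$. On $B$ we have $z/k_0^{1/s}=a$, and since $\phi_i(a)=0$ while $B$ is disjoint from $\supp x$, additivity of $I_{\phi_i}$ over disjoint supports yields
\begin{equation*}
\rho_i\left(\frac{x+z}{k_0^{1/s}}\right)=\rho_i\left(\frac{x}{k_0^{1/s}}\right)+\phi_i(a)\,\mu(B)=\rho_i\left(\frac{x}{k_0^{1/s}}\right)
\end{equation*}
for every $i\in\{1,\dots,n-1\}$. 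Hence the defining expression is unchanged at $k_0$, and by definition of $\norm{\cdot}{f}{}$ together with the attainment identity above,
\begin{equation*}
\norm{y}{f}{}\le k_0f\left(e_1+\sum_{i=2}^n\rho_{i-1}\left(\frac{y}{k_0^{1/s}}\right)e_i\right)=k_0f\left(e_1+\sum_{i=2}^n\rho_{i-1}\left(\frac{x}{k_0^{1/s}}\right)e_i\right)=\norm{x}{f}{}.
\end{equation*}
Combined with monotonicity this forces $\norm{y}{f}{}=\norm{x}{f}{}$ with $x\lneq y$, so $X_\rho$ is not strictly monotone.

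The main obstacle is precisely the availability of an attained minimizing scale $k_0$: the disjoint-support trick closes only because the optimum is reached at a fixed $k_0>0$, where the calibrated height $a\,k_0^{1/s}$ keeps $z/k_0^{1/s}$ below every threshold $a_{\phi_i}$. Were the infimum merely approached as $k\to 0^+$ (the alternative branch of Theorem \ref{thm:regular}$(iii)$), the rescaled bump $z/k^{1/s}$ would eventually exceed $a$ and contribute positively to each $\rho_i$, and equality of norms could break. The structural fact that removes this difficulty is that $a_{\phi_i}>0$ forces $\phi\notin\Delta_2$, hence $X_\rho\neq E_\rho$, which is exactly the regime where Theorem \ref{thm:regular}$(ii)$ guarantees attainment; the only remaining routine point is the elementary localization ensuring the base function leaves room for a disjoint bump.
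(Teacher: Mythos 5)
Your argument is correct \emph{modulo the results stated in the paper}, but it takes a genuinely different route from the paper's own proof, and the comparison is instructive. The paper's proof takes an arbitrary $y\in S(X_\rho)^+$ with $\mu(I\setminus\supp(y))>0$, picks for each $\epsilon>0$ an almost-minimizing scale $k_0=k_0(\epsilon)$, adds the bump $\min_{1\leq i\leq n-1}\{a_{\phi_i}\}k_0\chi_A$ on a set $A$ disjoint from $\supp(y)$, obtains $\norm{y}{f}{}\leq\norm{x}{f}{}\leq\norm{y}{f}{}+\epsilon$, and then simply asserts $\norm{x}{f}{}=\norm{y}{f}{}$. As written this has exactly the defect you isolated in your closing paragraph: the pair $(y,x)$ changes with $\epsilon$ (the bump height is calibrated to $k_0(\epsilon)$), so letting $\epsilon\to 0$ produces no single pair witnessing equality unless some almost-minimizing scales stay bounded away from $0$ --- that is, unless the infimum is essentially attained. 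Your proof confronts this head-on: you manufacture a witness $x\in X_\rho\setminus E_\rho$ vanishing on a set of positive measure (the localization argument is fine), invoke Theorem \ref{thm:regular}$(ii)$ to get an exact minimizer $k_0$, and then the disjoint-support computation closes cleanly, with monotonicity of $\norm{\cdot}{f}{}$ supplying the reverse inequality. On the crucial point your argument is more careful than the paper's own.

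The price is the reduction $a_{\phi_i}>0\ \Rightarrow\ \phi\notin\Delta_2\ \Rightarrow\ X_\rho\neq E_\rho$, which leans on Proposition \ref{prop:OC:Orlicz}; citing it is legitimate, but you should know it is the weak link, because that proposition (with the paper's \emph{global} $\Delta_2$) fails when $\alpha=1$. Concretely, take $n=2$, $f$ the $\ell_1$-norm, $\phi_1(t)=(|t|-1)_+$ on $I=[0,1)$: then $a_{\phi_1}=1>0$ and $\phi_1\notin\Delta_2$, yet $X_\rho=E_\rho=L^1$, and since $\mu(I)=1$ one computes
\begin{equation*}
\norm{y}{f}{}=\inf_{k>0}\left\{k+\int_0^1\bigl(|y(t)|-k\bigr)_+dt\right\}=\int_0^1|y(t)|\,dt,
\end{equation*}
because the function being minimized is nondecreasing in $k$ on a probability space. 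So $\norm{\cdot}{f}{}$ is the $L^1$-norm, which \emph{is} strictly monotone: on finite measure the infimum escapes to $k\to 0^+$ for every $y$ with non-full support, the bump trick is unavailable, and Proposition \ref{prop:NOT:SM} itself is false. Thus neither your proof nor the paper's covers $\alpha=1$; this is a defect of the proposition and of Proposition \ref{prop:OC:Orlicz}, not something you introduced. For $\alpha=\infty$ your proof is airtight, and there you can even bypass Proposition \ref{prop:OC:Orlicz} entirely: with $a=\min_{1\leq i\leq n-1}a_{\phi_i}$, the function $x_0=a\chi_{[1,\infty)}$ satisfies $\rho(x_0)=0$ while $\rho_{i_0}(2x_0)=\phi_{i_0}(2a)\cdot\mu([1,\infty))=\infty$ for the index attaining the minimum, so $x_0\in X_\rho\setminus E_\rho$ already vanishes on a set of positive measure, and your argument proceeds from Theorem \ref{thm:regular}$(ii)$ unchanged.
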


\begin{proof}
	Assume that $a_{\phi_i}>0$ for any $i\in\{1,\dots,n-1\}$. Let $\epsilon>0$ and $y\in{S(X_\rho)}^+$ be such that $\mu(I\setminus\supp(y))>0$. Then, there exists $k_0>0$ such that 
	\begin{equation*}
	k_0f\left(e_1+\sum_{k=2}^{n}\rho_{k-1}\left(\frac{y}{k_0}\right)e_k\right)\leq\norm{y}{f}{}+\epsilon.
	\end{equation*}
	Let $A\subset{I\setminus\supp(y)}$ and $0<\mu(A)<\infty$. Define
	\begin{equation*}
	x=y+\min_{1\leq i\leq{n-1}}\{a_{\phi_i}\}k_0\chi_A.
	\end{equation*}
	Notice that $0\leq{y}\leq{x}$ and $y\neq{x}$, whence $\norm{y}{f}{}\leq\norm{x}{f}{}$. On the other hand, by assumption that $\rho_i$ vanishes outside zero for any $i\in\{1,\dots,n-1\}$ we have
	\begin{align*}
	\norm{x}{f}{}&\leq 	k_0f\left(e_1+\sum_{k=2}^{n}\rho_{k-1}\left(\frac{x}{k_0}\right)e_k\right)\\
	&= k_0f\left(e_1+\sum_{k=2}^{n}I_{\phi_{k-1}}\left(\frac{y}{k_0}+{\min_{1\leq i\leq{n-1}}\{a_{\phi_i}\}\chi_A}\right)e_k\right)\\
	&= k_0f\left(e_1+\sum_{k=2}^{n}I_{\phi_{k-1}}\left(\frac{y}{k_0}\right)e_k\right)\leq\norm{y}{f}{}+\epsilon.
	\end{align*}
	Hence, we get $\norm{x}{f}{}=\norm{y}{f}{}$, consequently by assumption that $x\neq y$ and $y\leq{x}$ a.e. we conclude that $X_\rho$ is not strictly monotone.
\end{proof}

Now, we discuss a complete characterization of strict monotonicity for space $X_\rho$. First, we present a full criteria of strict monotonicity for $X_\rho$ in case when $f$ is strictly monotone and convex function and $(X_\rho,\norm{\cdot}{f}{})$ is regular (see Definition \ref{def:regular}). Next, we provide an equivalent condition for $SM$ in case when $f$ is a uniformly monotone norm.
 
\begin{theorem}\label{thm:f:convex:SM}
	Let $f:\mathbb{R}^n\rightarrow[0,\infty)$ be a convex function that is strictly monotone on $(\mathbb{R}^n)^+$. Let $\phi_i$ be an Orlicz function such that  and $\rho_i(x)=I_{\phi_i}(|x|)$ for any $i\in\{1,\dots,n-1\}$ and $x\in{X_\rho}$, where $\rho=\max_{1\leq i\leq{n-1}}\{\rho_i\}$. Assume that there is $k\in\{1,\dots,n-1\}$ such that $\phi_k$ is $N$-function at $\infty$. The space $X_\rho$ equipped with the norm $\norm{\cdot}{f}{}$ is strictly monotone if and only if $a_{\phi_j}=0$ for some $j\in\{1,\dots,n-1\}$.
\end{theorem}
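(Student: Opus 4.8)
The statement is a dichotomy: strict monotonicity of $(X_\rho,\norm{\cdot}{f}{})$ is controlled by the single quantity $\min_{1\le j\le n-1}a_{\phi_j}$. The plan is to prove both implications by reducing each to a result already established in the excerpt: necessity to Proposition \ref{prop:NOT:SM}, and sufficiency to Theorem \ref{thm:SM}.

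First I would dispatch the necessity (``only if'') direction by contraposition. If $a_{\phi_j}\neq 0$, i.e. $a_{\phi_j}>0$, for \emph{every} $j\in\{1,\dots,n-1\}$, then each $\phi_j$ vanishes on a neighbourhood of the origin, so the hypothesis of Proposition \ref{prop:NOT:SM} holds verbatim and $X_\rho$ equipped with $\norm{\cdot}{f}{}$ is \emph{not} strictly monotone. Taking the contrapositive yields: if $X_\rho$ is strictly monotone, then $a_{\phi_j}=0$ for some $j$.

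For the sufficiency (``if'') direction I would verify the two hypotheses of Theorem \ref{thm:SM} and then invoke it. I must check that (i) $(X_\rho,\norm{\cdot}{f}{})$ is regular in the sense of Definition \ref{def:regular}, and that (ii) some index $j$ carries the strict modular inequality $\rho_j(\lambda x)<\rho_j(\lambda y)$ for $x\le y$, $x\neq y$ whenever $\rho_j(\lambda y)<\infty$. For (i) I would rerun the argument of Remark \ref{rem:regular:Orlicz}, but bounding the vector inside $f$ from below using the $(k+1)$-st coordinate (where $\rho_k=I_{\phi_k}$ sits) rather than the first: for $u\le 1$, by convexity and monotonicity of $f$,
$$
u\,f\Big(e_1+\sum_{i=2}^n\rho_{i-1}(x/u^{1/s})\,e_i\Big)\ \ge\ f\Big(\tfrac{1}{t}e_1+\tfrac{1}{t}I_{\phi_k}(t^{1/s}x)\,e_{k+1}\Big),\qquad t=1/u .
$$
Then Fatou's lemma together with the $N$-function-at-$\infty$ growth of $\phi_k$ forces $\tfrac{1}{t}I_{\phi_k}(t^{1/s}x)\to\infty$ as $t\to\infty$ for every $x\in X_\rho\setminus\{0\}$, and continuity of $f$ delivers the strict inequality defining regularity; the continuity of $u\mapsto\rho(ux)$ on $[0,a]$ is the dominated-convergence argument already recorded in that remark. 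For (ii) I would use $a_{\phi_j}=0$: since $\phi_j$ is convex with $\phi_j(0)=0$ and $\phi_j(t)>0$ for all $t>0$, the estimate $\phi_j(s)\le(s/t)\phi_j(t)<\phi_j(t)$ for $0\le s<t$ with $\phi_j(t)<\infty$ shows $\phi_j$ is strictly increasing where it is finite. Hence for $x\le y$, $x\neq y$ and $\rho_j(\lambda y)<\infty$ one has $\phi_j(\lambda|x|)\le\phi_j(\lambda|y|)$ pointwise, strictly on the positive-measure set $\{|x|<|y|\}$, and integrating (the majorant $\phi_j(\lambda|y|)$ being finite a.e.) gives $\rho_j(\lambda x)<\rho_j(\lambda y)$. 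With (i) and (ii) in hand, Theorem \ref{thm:SM} immediately yields strict monotonicity.

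I expect step (i) to be the main obstacle, namely matching the growth hypothesis ``$\phi_k$ is an $N$-function at $\infty$'' with the precise integral estimate needed for regularity. The subtlety is that after the substitution $v=t^{1/s}|x|$ the relevant divergence involves $\phi_k(v)$ divided by a \emph{power of $v$ dictated by the $s$-scaling} rather than by $v$ itself (compare Remark \ref{rem:regular:Orlicz}, where the condition is phrased on $g_i$ via $\phi_i(u)=g_i(u^s)$); I would therefore record the exact exponent produced by the scaling and confirm that it still forces divergence. Everything else is bookkeeping built on the already-proven Theorem \ref{thm:SM} and Proposition \ref{prop:NOT:SM}.
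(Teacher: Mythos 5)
Your proposal is correct and takes essentially the same route as the paper: necessity by contraposition via Proposition~\ref{prop:NOT:SM}, and sufficiency by establishing regularity through (an adaptation of) Remark~\ref{rem:regular:Orlicz} and then invoking Theorem~\ref{thm:SM}. Your write-up is in fact somewhat more careful than the paper's, which cites Remark~\ref{rem:regular:Orlicz} directly even though that remark places the growth condition on the first modular rather than on the given $\phi_k$, and which leaves the strict modular inequality $\rho_j(\lambda x)<\rho_j(\lambda y)$ (your step (ii)) implicit.
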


\begin{proof}
	Immediately, since there exists $k\in\{1,\dots,n-1\}$ such that $\phi_k$ is $N$-function at $\infty$, by Remark \ref{rem:regular:Orlicz} we infer that $(X_\rho,\norm{\cdot}{f}{})$ is regular. Next, in view of assumption that $f$ is a strictly monotone norm, since a semimodular $\rho_i$ is monotone for any $i\in\{1,\dots,n-1\}$ and by Proposition \ref{prop:NOT:SM} and Theorem \ref{thm:SM} we complete the proof.
\end{proof}

\begin{theorem}\label{thm:f:norm:SM}
Let $f$ be a norm on $\mathbb{R}^n$ that is uniformly monotone. Let $\phi_i$ be an Orlicz function such that  and $\rho_i(x)=I_{\phi_i}(|x|)$ for any $i\in\{1,\dots,n-1\}$ and $x\in{X_\rho}$, where $\rho=\max_{1\leq i\leq{n-1}}\{\rho_i\}$. The space $X_\rho$ equipped with the norm $\norm{\cdot}{f}{}$ is strictly monotone if and only if $a_{\phi_j}=0$ for some $j\in\{1,\dots,n-1\}$.
\end{theorem}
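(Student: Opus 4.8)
The plan is to obtain the two implications from results already in hand: the necessity from Proposition \ref{prop:NOT:SM}, and the sufficiency from Proposition \ref{prop:modul:mono} combined with the positivity of the modulus of monotonicity of $f$. For the necessity I would argue by contraposition: assume $a_{\phi_i}>0$ for \emph{every} $i\in\{1,\dots,n-1\}$, so each $\phi_i$ vanishes on a neighbourhood of zero. This is exactly the hypothesis of Proposition \ref{prop:NOT:SM}, which then delivers that $X_\rho$ equipped with $\norm{\cdot}{f}{}$ is not strictly monotone. Hence strict monotonicity forces $a_{\phi_j}=0$ for some $j$, which settles one direction with essentially no extra work.

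For the sufficiency I would fix $j$ with $a_{\phi_j}=0$, so that $\phi_j(t)>0$ for all $t>0$. Before invoking Proposition \ref{prop:modul:mono} I would record that its standing assumptions hold for the Orlicz modulars $\rho_i=I_{\phi_i}(\abs{\cdot}{}{})$: each $\rho_i$ is convex (because $\phi_i$ is), monotone, left-continuous on $(0,\infty)$ and continuous at $0$ (by monotone convergence, using continuity of $\phi_i$), and superadditive on $(X_\rho)^+$ (since a convex $\phi_i$ with $\phi_i(0)=0$ satisfies $\phi_i(a+b)\ge\phi_i(a)+\phi_i(b)$, which integrates to superadditivity of $I_{\phi_i}$). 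I would also note that a uniformly monotone norm is in particular monotone, so it satisfies \eqref{crucial} and $\norm{\cdot}{f}{}$ is genuinely a norm; moreover uniform monotonicity of $f$ means precisely that $\delta_f(\epsilon)>0$ for every $\epsilon\in(0,1]$, exactly as used in the proof of Theorem \ref{thm:UM}.

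The core step is then a direct verification of strict monotonicity through its equivalent $LM$-point formulation. I would fix $v\in(X_\rho)^+\setminus\{0\}$ and, using $s$-homogeneity, normalise $\norm{v}{f}{}=1$. Taking any $u\in(X_\rho)^+$ with $u\le v$ a.e. and $u\neq v$, I set $w=v-u$, so that $0\le w\le v$ and $w\in X_\rho$ by solidness. Applying Proposition \ref{prop:modul:mono} with $y=v$ and the role of its ``$x$'' played by $w$ gives $\rho(cw)\le1$ and
\[
\norm{u}{f}{}=\norm{v-w}{f}{}\le 1-\delta_f(\rho(cw)),\qquad c=\min_{1\le i\le n}f(e_i)>0.
\]
Since $w\ge0$ and $w\neq0$, we have $\mu(\supp(w))>0$, and because $a_{\phi_j}=0$ the integrand $\phi_j(cw)$ is strictly positive on $\supp(w)$, whence
\[
\rho(cw)\ge I_{\phi_j}(cw)=\int_I\phi_j(cw)\,d\mu>0 .
\]
As $\rho(cw)\in(0,1]$ and $f$ is uniformly monotone, $\delta_f(\rho(cw))>0$, so $\norm{u}{f}{}<1=\norm{v}{f}{}$. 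Thus every $v\in(X_\rho)^+\setminus\{0\}$ is an $LM$ point, i.e. $X_\rho$ is strictly monotone.

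The steps verifying the modular hypotheses are routine; the point demanding care is the substitution $x\mapsto v-u$ in Proposition \ref{prop:modul:mono} together with the observation that the assumption $a_{\phi_j}=0$ is \emph{exactly} what guarantees $\rho\big(c(v-u)\big)>0$ for every nonzero increment $v-u\ge0$. This is the mechanism that converts the mere positivity of $\delta_f$ into the strict inequality of norms, and it is precisely this mechanism that breaks down when all $a_{\phi_i}>0$, in agreement with Proposition \ref{prop:NOT:SM}. I do not expect a genuine obstacle beyond correctly matching the normalisation and the roles of the variables, since the substantive estimate is already carried by Proposition \ref{prop:modul:mono}.
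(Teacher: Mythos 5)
Your proposal is correct and takes essentially the same route as the paper: necessity via the contrapositive of Proposition \ref{prop:NOT:SM}, and sufficiency by applying Proposition \ref{prop:modul:mono} to the increment $w=v-u$ (which plays the role of the paper's $x$), using $a_{\phi_j}=0$ to get $\rho(cw)\geq I_{\phi_j}(cw)>0$ and uniform monotonicity of $f$ to get $\delta_f(\rho(cw))>0$. Your explicit verification of the standing hypotheses of Proposition \ref{prop:modul:mono} (convexity, monotonicity, left-continuity and superadditivity of the Orlicz modulars) is routine detail that the paper simply asserts.
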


\begin{proof}
	\textit{Necessity}. Immediately, by Proposition \ref{prop:NOT:SM} we get that there exists $j\in\{1,\dots,n-1\}$ such that $a_{\phi_j}=0$.\\
	\textit{Sufficiency}. Suppose that $a_{\phi_j}=0$ for some $j\in\{1,\dots,n-1\}$. Let $x,y\in{X_\rho}$ be such that $0\leq{x}\leq{y}$ and $x\neq{y}$ and let $c=\min_{1\leq i\leq{n}}\{f(e_i)\}$, where $(e_i)_{i=1}^n$ are elements of standard basis in $\mathbb{R}^n$. We may consider without loss of generality that $\norm{y}{f}{}=1$. Clearly, we have $0\leq{}y-x\leq{y}$ and $y\neq y-x\neq{0}$. 
	Then, since $I_{\phi_i}$ is left-continuous and superadditive for any $i\in\{1,\dots,n-1\}$, by Proposition \ref{prop:modul:mono} and by assumption that $\phi_j$ vanishes only at zero for some $j\in\{1,\dots,n-1\}$ we obtain $1\geq\rho(cx)\geq{I_{\phi_j}(cx)}>0$ and consequently by assumption that $f$ is uniformly monotone we get 
	\begin{equation*}
	\norm{y-x}{f}{}\leq{1}-\delta_f(\rho(cx))<1=\norm{y}{f}{}.
	\end{equation*}
\end{proof}

We say that a sequence $(x_m)$ in a Banach space $X$ is nearly order convergence to $x\in{X^+}$ if $x_m\leq{x}$ for any $n\in\mathbb{N}$ and $\norm{x_m}{X}{}\uparrow\norm{x}{X}{}$. Now, we show analogous result to Lemma 3.2 in \cite{CuHuWi}. For a sake of completeness and the reader's convenience we present the whole details of the following proof. 

\begin{lemma}\label{lem:conv:in:measu}
	Let $f$ be a norm on $\mathbb{R}^n$ that is uniformly monotone. Let $\phi_i$ be an Orlicz function and $\rho_i(x)=I_{\phi_i}(|x|)$ for any $i\in\{1,\dots,n-1\}$ and $x\in{X_\rho}$, where $\rho=\max_{1\leq i\leq{n-1}}\{\rho_i\}$. If there is $j\in\{1,\dots,n-1\}$ such that $a_{\phi_j}=0$, then the nearly order convergence implies the convergence in measure on $(X_\rho)^+$.
\end{lemma}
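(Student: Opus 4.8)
Suppose, for contradiction, that $(x_m)\subset(X_\rho)^+$ is \emph{nearly order convergent} to $x\in(X_\rho)^+$, i.e. $x_m\leq{x}$ for all $m$ and $\norm{x_m}{f}{}\uparrow\norm{x}{f}{}$, but $(x_m)$ does not converge to $x$ in measure. If $x=0$ then $0\leq{x_m}\leq{x}=0$ forces $x_m=0$ and the conclusion is trivial, so assume $x\neq0$; dividing through by $\norm{x}{f}{}$ (which preserves both nearly order convergence and convergence in measure) we may assume $\norm{x}{f}{}=1$. Writing $z_m=x-x_m\geq0$, the failure of convergence in measure yields $\epsilon_0,\sigma_0>0$ and a subsequence, still denoted $(x_m)$, such that $\mu(A_m)>\sigma_0$, where $A_m=\{t\in{I}:z_m(t)>\epsilon_0\}$.

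The key construction is to replace $A_m$ by a measurable subset $B_m\subset{A_m}$ with $\mu(B_m)=\sigma_0$, which exists because the Lebesgue measure is atomless, and to set $w_m=\epsilon_0\chi_{B_m}$. Then $0\leq{w_m}\leq{z_m}\leq{x}$ (the last inequality because $x_m\geq0$), so $w_m\in{X_\rho}$ and $x_m\leq{x-w_m}$. Let $c=\min_{1\leq{i}\leq{n}}\{f(e_i)\}>0$. Here is precisely where the hypothesis $a_{\phi_j}=0$ enters: since $c\epsilon_0>0>a_{\phi_j}$ we have $\phi_j(c\epsilon_0)>0$, whence
\begin{equation*}
\rho(cw_m)=\max_{1\leq{i}\leq{n-1}}\{I_{\phi_i}(cw_m)\}\geq{I_{\phi_j}(cw_m)}=\phi_j(c\epsilon_0)\,\mu(B_m)=\phi_j(c\epsilon_0)\,\sigma_0=:\eta>0,
\end{equation*}
a positive constant independent of $m$.

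Now I would invoke Proposition \ref{prop:modul:mono}. Each $\rho_i=I_{\phi_i}$ is convex, monotone, left-continuous and superadditive (the last because $\phi_i$ is convex and vanishes at zero), so its hypotheses are satisfied. Applying it to the admissible configuration $0\leq{w_m}\leq{x}$ with $\norm{x}{f}{}=1$ gives $\rho(cw_m)\leq1$ (so that $\eta\leq\rho(cw_m)\leq1$) and
\begin{equation*}
\norm{x-w_m}{f}{}\leq1-\delta_f(\rho(cw_m)).
\end{equation*}
The modulus $\delta_f$ is nondecreasing (a larger threshold $f(u)\geq\epsilon$ restricts the infimum to a smaller set), so $\delta_f(\rho(cw_m))\geq\delta_f(\eta)$, and uniform monotonicity of $f$ gives $\delta_f(\eta)>0$. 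Since $x_m\leq{x-w_m}$ and $\norm{\cdot}{f}{}$ is monotone (Proposition \ref{proposistion:BFS}),
\begin{equation*}
\norm{x_m}{f}{}\leq\norm{x-w_m}{f}{}\leq1-\delta_f(\eta)<1
\end{equation*}
for every $m$ in the chosen subsequence. As $(\norm{x_m}{f}{})$ is nondecreasing, this forces $\lim_m\norm{x_m}{f}{}\leq1-\delta_f(\eta)<1$, contradicting $\norm{x_m}{f}{}\uparrow\norm{x}{f}{}=1$.

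The step I expect to be the crux is the measure-theoretic reduction in the second paragraph: truncating $A_m$ to a set $B_m$ of a \emph{fixed} positive measure so that the test functions $w_m$ are uniformly bounded below in modular, i.e. $\rho(cw_m)\geq\eta$ with $\eta$ independent of $m$, while remaining dominated by $x$ so that Proposition \ref{prop:modul:mono} is applicable. The hypothesis $a_{\phi_j}=0$ is used exactly to guarantee $\phi_j(c\epsilon_0)>0$, and once the uniform lower bound $\eta>0$ is in hand, uniform monotonicity of $f$ converts it into a fixed gap $\delta_f(\eta)>0$ below $\norm{x}{f}{}$, which is incompatible with nearly order convergence.
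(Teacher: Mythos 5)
Your proof is correct and follows essentially the same route as the paper's: a proof by contradiction in which a characteristic-function test element dominated by $x$ is plugged into Proposition \ref{prop:modul:mono}, the hypothesis $a_{\phi_j}=0$ supplies a uniform positive lower bound on the modular, and uniform monotonicity of $f$ converts it into a fixed gap below $\norm{x}{f}{}=1$; the only (cosmetic) difference is that you truncate the sets to exact measure $\sigma_0$, while the paper keeps sets of finite measure at least $\eta$ and uses monotonicity of $\delta_f$. One typo: ``$c\epsilon_0>0>a_{\phi_j}$'' should read $c\epsilon_0>0=a_{\phi_j}$.
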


\begin{proof}
	Let $(x_m)\subset{(X_\rho)^+}$, $x\in({X_\rho})^+$ be such that  $x_m\leq{x}$ for any $m\in\mathbb{N}$ and $\norm{x_m}{x}{}\uparrow\norm{x}{X}{}$. Without loss of generality we may assume that $\norm{x}{f}{}=1$. Suppose for a contrary that $x_m$ does not converge to $x$ in measure, i.e. there exist $\epsilon,\eta>0$ and $(m_k)\subset\mathbb{N}$ such that $\inf_{k\in\mathbb{N}}\mu(A_k)\geq\eta$ where
	\begin{equation*}
	A_k\subset\left\{t\in{I}:x(t)-x_{m_k}(t)>\epsilon\right\}\quad\textnormal{and}\quad{}\mu(A_k)<\infty,
	\end{equation*}
	for any $k\in\mathbb{N}$. Let $c=\min_{1\leq i\leq{n}}\{f(e_i)\}$, where $(e_i)_{i=1}^n$ is a standard basis in $\mathbb{R}^n$. Without loss of generality we may assume that $$c\epsilon<\min_{1\leq{i}\leq{n}}\{b_{\phi_i}\}\quad\textnormal{and}\quad\eta<\frac{1}{\max_{1\leq i\leq{n-1}}\{\phi_i(c\epsilon)\}}.$$ Clearly, we have
	\begin{equation*}
	x_{m_k}(t)<x(t)-\epsilon\chi_{A_k}(t)\quad\textnormal{a.e.}
	\end{equation*}
	Hence, since $I_{\phi_i}$ is left-continuous, monotone and superadditive for any $i\in\{1,\dots,n-1\}$, by Proposition \ref{prop:modul:mono} we obtain
	\begin{align*}
	\norm{x_{m_k}}{f}{}&\leq\norm{x-\epsilon\chi_{A_k}}{f}{}\leq{1}-\delta_f(\rho(c\epsilon\chi_{A_k}))\\
	&\leq{1}-\delta_f\left(\max_{1\leq i\leq{n-1}}\int_I\phi_{i}\left(c\epsilon\chi_{A_k}\right)d\mu\right)\\
	&\leq{1}-\delta_f\left(\max_{1\leq i\leq{n-1}}\{\phi_{i}(c\epsilon)\}\mu{(A_k)}\right)\\
	&\leq{1}-\delta_f\left(\eta\max_{1\leq i\leq{n-1}}\{\phi_{i}(c\epsilon)\}\right)
	\end{align*}
 	for any $k\in\mathbb{N}$. Since $a_{\phi_j}=0$ for some $j\in\{1,\dots,n-1\}$ we get $$\eta\max_{1\leq{}i\leq{n-1}}\{\phi_{i}(c\epsilon)\}>0.$$ Therefore, passing to subsequence and relabeling if necessary we may easily observe that $\norm{x_m}{f}{}\nrightarrow{1}$. So, this concludes a contradiction and completes the proof. 	
\end{proof}

\begin{theorem}
	Let $f$ be a norm on $\mathbb{R}^n$ that is uniformly monotone. Let $\phi_i$ be an Orlicz function and $\rho_i(x)=I_{\phi_i}(|x|)$ for any $i\in\{1,\dots,n-1\}$ and $x\in{X_\rho}$, where $\rho=\max_{1\leq i\leq{n-1}}\{\rho_i\}$. If $\phi=\max_{1\leq i\leq{n-1}}\{\phi_i\}$ satisfies $\Delta_2$ condition, then $X_\rho$ equipped with the norm $\norm{\cdot}{f}{}$ is lower locally uniformly monotone.
\end{theorem}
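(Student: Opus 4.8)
The plan is to combine the order continuity forced by the $\Delta_2$ condition with the convergence in measure supplied by Lemma \ref{lem:conv:in:measu}, and then to bridge the two via the Riesz subsequence principle. First I would check that the hypotheses of the quoted results are met. A uniformly monotone norm is in particular convex and monotone, hence satisfies \eqref{crucial}, so Proposition \ref{prop:OC:Orlicz} applies and the assumption $\phi=\max_{1\le i\le n-1}\{\phi_i\}\in\Delta_2$ is equivalent to the order continuity of $\norm{\cdot}{f}{}$ (and to $X_\rho=E_\rho$). Moreover $\phi\in\Delta_2$ forces $a_\phi=0$, and since $\phi(t)=0$ precisely when every $\phi_i(t)=0$ we have $a_\phi=\min_{1\le i\le n-1}a_{\phi_i}$; hence $a_{\phi_j}=0$ for some $j\in\{1,\dots,n-1\}$, which is exactly the hypothesis needed to invoke Lemma \ref{lem:conv:in:measu}.

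Next I would fix $x\in(X_\rho)^+\setminus\{0\}$ together with an arbitrary sequence $(x_m)\subset(X_\rho)^+$ satisfying $x_m\le x$ a.e. and $\norm{x_m}{f}{}\to\norm{x}{f}{}$; the goal is $\norm{x-x_m}{f}{}\to 0$. Setting $z_m=x-x_m$, domination gives $0\le z_m\le x$ a.e. Because the norm is monotone we have $\norm{x_m}{f}{}\le\norm{x}{f}{}$, and since the limit coincides with this supremum, the nearly order convergence hypothesis of Lemma \ref{lem:conv:in:measu} is satisfied (after extracting a nondecreasing subsequence if necessary). Applying that lemma yields $x_m\to x$ in measure, that is, $z_m\to 0$ in measure.

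Finally I would upgrade measure convergence to norm convergence using order continuity through the standard subsequence argument. Suppose, for contradiction, that $\norm{z_m}{f}{}\not\to 0$; then some subsequence satisfies $\norm{z_{m_k}}{f}{}\ge\delta>0$. Since $z_{m_k}\to 0$ in measure, the Riesz theorem provides a further subsequence with $z_{m_{k_l}}\to 0$ a.e. As $z_{m_{k_l}}\le x\in X_\rho$ and $X_\rho$ is order continuous (so $x$ is a point of order continuity), the very definition of order continuity forces $\norm{z_{m_{k_l}}}{f}{}\to 0$, contradicting $\norm{z_{m_{k_l}}}{f}{}\ge\delta$. Hence $\norm{x-x_m}{f}{}\to 0$, so $x$ is an $LLUM$ point; as $x\in(X_\rho)^+\setminus\{0\}$ was arbitrary, $X_\rho$ is lower locally uniformly monotone.

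I expect the only genuinely delicate point to be the passage from convergence in measure to norm convergence: convergence in measure is strictly weaker than the a.e.\ convergence appearing in the definition of order continuity, and it is precisely the Riesz subsequence extraction together with the domination $z_m\le x$ that closes this gap. Reducing the (possibly non-monotone) numerical convergence $\norm{x_m}{f}{}\to\norm{x}{f}{}$ to the nearly order convergence hypothesis of Lemma \ref{lem:conv:in:measu} is a minor technical step handled by the same subsequence principle.
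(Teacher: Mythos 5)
Your proof is correct, and its skeleton coincides with the paper's: both arguments derive $a_{\phi_j}=0$ for some $j$ from $\phi\in\Delta_2$, and both invoke Lemma \ref{lem:conv:in:measu} to convert the LLUM hypothesis into (global) convergence in measure of $x_m$ to $x$. Where you genuinely diverge is the upgrade from convergence in measure to norm convergence. The paper stays at the modular level: since $\Delta_2$ gives $X_\rho=E_\rho$ via Proposition \ref{prop:OC:Orlicz}, one has $\rho(\lambda x)<\infty$ for every $\lambda>0$, so the in-measure version of the Dominated Convergence Theorem yields $\rho(\lambda(x-x_m))\rightarrow 0$ for each $\lambda>0$, which is equivalent to $\norm{x-x_m}{f}{}\rightarrow 0$ by the modular--norm equivalence of Proposition \ref{proposistion:BFS}. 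You instead use the other half of Proposition \ref{prop:OC:Orlicz} --- order continuity of the space --- together with the Riesz subsequence theorem and a double-subsequence contradiction. Both routes work and are of comparable length; yours uses order continuity as a black box and avoids the slightly less standard in-measure form of dominated convergence, at the cost of nested subsequence extractions, while the paper's is more quantitative at the level of the modulars. Two small remarks. First, your caveat about extracting a nondecreasing subsequence before applying Lemma \ref{lem:conv:in:measu} is harmless but in fact unnecessary: the lemma's proof never uses that $\norm{x_m}{f}{}$ increases, only that these norms converge to $\norm{x}{f}{}$, and the paper itself applies the lemma with plain convergence. Second, be careful with the order of extractions in your final contradiction: since (with your reading of the lemma) measure convergence is only guaranteed along subsequences, you must first pass to the bad subsequence with $\norm{z_{m_k}}{f}{}\geq\delta$, then extract the monotone-norm subsequence, then apply the lemma and Riesz; done in this order --- as your closing paragraph indicates you intend --- the argument closes without any gap.
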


\begin{proof}
	Let $(x_m)\subset ({X_\rho})^+$, $x\in({X_\rho})^+$ be such that $x_m\leq{x}$ and $\norm{x_m}{f}{}\rightarrow\norm{x}{f}{}={1}$. Let $\lambda>0$. Since $\phi$ satisfies $\Delta_2$ condition, it is easy to see that $a_\phi=0$, and consequently by definition of $\phi$ there is $j\in\{1,\dots,n-1\}$ such that $a_{\phi_j}=0$. Hence, since $0\leq\lambda(x-x_m)\leq\lambda{x}$ a.e. for any $m\in\mathbb{N}$, by Lemma \ref{lem:conv:in:measu} it follows that $\lambda{x_m}$ converges to $\lambda{x}$ in globally measure. Next, by assumption that $\phi$ satisfies $\Delta_2$ condition, by Proposition \ref{prop:OC:Orlicz} we have $\rho(\lambda{x})<\infty$. In consequence, by Lebesgue Dominated Convergence Theorem (see \cite{Royd}) we infer that $\rho(\lambda(x-x_m))\rightarrow{0}$. Hence, since $\lambda>0$ is arbitrary, we conclude $\norm{x-x_m}{f}{}\rightarrow{0}$ and finish the proof.
\end{proof}

\begin{theorem}\label{thm:ULUM=>OC}
	Let $f$ be a norm on $\mathbb{R}^n$ satisfying \eqref{crucial}. Let $\phi_i$ be an Orlicz function and $\rho_i(x)=I_{\phi_i}(|x|)$ for any $i\in\{1,\dots,n-1\}$ and $x\in{X_\rho}$, where $\rho=\max_{1\leq i\leq{n-1}}\{\rho_i\}$. If the space $X_\rho$ equipped with the norm $\norm{\cdot}{f}{}$ is upper locally uniformly monotone, then the norm $\norm{\cdot}{f}{}$ is order continuous.
\end{theorem}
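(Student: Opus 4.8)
The plan is to argue by contraposition: assuming $\norm{\cdot}{f}{}$ is not order continuous, I will produce a point of $(X_\rho)^+\setminus\{0\}$ that is not a point of upper local uniform monotonicity, contradicting $X_\rho\in(ULUM)$. By Theorem \ref{thm:OC-space} together with Proposition \ref{prop:OC:Orlicz}, failure of order continuity is equivalent to $\phi=\max_{1\le i\le n-1}\{\phi_i\}\notin\Delta_2$, and this in turn yields, by the standard disjoint-block construction from a non-$\Delta_2$ Orlicz function, a function $z\in(X_\rho)^+$ supported (say) in the right half of $I$, of the form $z=\sum_{k\ge1}z\chi_{A_k}$ over pairwise disjoint $A_k$, with $\rho(z)\le1$, and a fixed $\lambda_0>1$ such that the tails $d_m:=z\chi_{\bigcup_{k\ge m}A_k}$ satisfy $\rho(\lambda_0 d_m)=\infty$ for every $m$ while $\rho(\beta d_m)\to0$ as $m\to\infty$ for each $0<\beta<\lambda_0$. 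Here I use that tails of a convergent modular series vanish, whereas tails of a divergent one still diverge.

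Next I record the two properties of $(d_m)$ that drive the contradiction. First, $d_m\downarrow0$ a.e.; second, by the norm–modular relation established in the proof of Proposition \ref{proposistion:BFS} (via Theorems 1.6 and 1.10 in \cite{Mus} and Corollary \ref{equival:f&A}), the fact that $\rho(\lambda_0 d_m)=\infty$ prevents $\norm{d_m}{f}{}\to0$, so passing to a subsequence I may assume $\norm{d_m}{f}{}\ge c_0$ for some $c_0>0$. Now I fix a point $p\in(X_\rho)^+\setminus\{0\}$ supported in the left half of $I$, so that $p$ and every $d_m$ have disjoint supports. The role of $p$ is to set a scale: writing $g_p(k)=kf(e_1+\sum_{i=2}^n\rho_{i-1}(p/k^{1/s})e_i)$, I choose $p$ so that the infimum $\norm{p}{f}{}=\inf_{k>0}g_p(k)$ is attained at some finite $k_0>0$ with $1/k_0^{1/s}<\lambda_0$. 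Attainment follows from Theorem \ref{thm:regular}, whose regularity hypothesis holds here by Remark \ref{rem:regular:Orlicz} in the principal non-$\Delta_2$ regime (where $\phi$ is an $N$-function at infinity), and the homogeneity relation $g_{Tp}(k)=T^s g_p(k/T^s)$ read off from the proof of Theorem \ref{increasing} shows that replacing $p$ by $Tp$ scales the minimizer to $T^s k_0$, so enlarging $T$ forces $k_0>\lambda_0^{-s}$ as required.

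With these choices set $v_m=p+d_m$. Then $v_m\ge p$ and $v_m-p=d_m$, so $\norm{v_m-p}{f}{}=\norm{d_m}{f}{}\ge c_0\not\to0$. It remains to prove the crucial convergence $\norm{v_m}{f}{}\to\norm{p}{f}{}$. Since $p$ and $d_m$ are disjointly supported and each $\rho_i=I_{\phi_i}$ is additive over disjoint supports, at the fixed scale $k_0$ one has $\rho_i(v_m/k_0^{1/s})=\rho_i(p/k_0^{1/s})+\rho_i(d_m/k_0^{1/s})$, and $\rho_i(d_m/k_0^{1/s})\le\rho(d_m/k_0^{1/s})\to0$ because $1/k_0^{1/s}<\lambda_0$. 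Continuity of the norm $f$ on $\mathbb{R}^n$ then gives $g_{v_m}(k_0)\to g_p(k_0)=\norm{p}{f}{}$, whence $\limsup_m\norm{v_m}{f}{}\le\norm{p}{f}{}$; monotonicity of $\norm{\cdot}{f}{}$ (Proposition \ref{proposistion:BFS}) gives the reverse inequality $\norm{v_m}{f}{}\ge\norm{p}{f}{}$, so $\norm{v_m}{f}{}\to\norm{p}{f}{}$. Thus $p\le v_m$, $\norm{v_m}{f}{}\to\norm{p}{f}{}$ and $\norm{v_m-p}{f}{}\ge c_0$, which shows $p$ is not a $ULUM$ point; hence $X_\rho\notin(ULUM)$ and the contrapositive is proved.

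I expect the single genuinely delicate step to be arranging the Amemiya-type scale $k_0$ computing $\norm{p}{f}{}$ to lie below the modular blow-up threshold $\lambda_0$ of the tails, so that adding $d_m$ is modularly negligible precisely at the scale that computes $\norm{p}{f}{}$; the scaling $p\mapsto Tp$ handles this whenever the minimizer is attained. The only case needing separate treatment is the non-regular one, when $\phi$ is merely linear at infinity but fails $\Delta_2$ near zero (possible only for $\alpha=\infty$), where instead I would exploit that the near-optimal scales for $\norm{p}{f}{}$ reach below $\lambda_0$ and build $d_m$ from small block-heights on sets of large measure.
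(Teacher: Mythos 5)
Your main case is, at its core, the same argument as the paper's proof: fix a norm\nobreakdash-one element supported away from a bad sequence, add that sequence rescaled so that it is modularly negligible at the Amemiya scale which (nearly) computes the norm of the fixed element, use additivity of $I_{\phi_i}$ over disjoint supports, continuity of $f$ and \eqref{crucial} to force the perturbed norms down to $\norm{p}{f}{}$ while the perturbations stay bounded away from zero in norm, contradicting $ULUM$. The differences lie in the supporting machinery. The paper works directly with a witness of non-order-continuity, $(z_m)\subset(X_\rho)^+$, $z_m\downarrow0$ a.e., $\norm{z_m}{f}{}\geq\epsilon$, gets $I_{\phi_i}(z_m)\rightarrow0$ by dominated convergence, and uses an $\epsilon_1$-approximate minimizing scale $k_0$; you instead route through the $\Delta_2$-characterization (Proposition \ref{prop:OC:Orlicz}) and a non-$\Delta_2$ block construction, and you insist on an exact minimizer obtained from regularity. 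Two remarks on your route: the block construction does not give $\rho(\beta d_m)\rightarrow0$ for \emph{all} $\beta<\lambda_0$ with $\lambda_0>1$ (for $1<\beta<\lambda_0$ convexity provides no upper control of $\phi(\beta u_k)$ in terms of $\phi(u_k)$); what it gives is smallness for $\beta\leq1$ and blow-up at $\lambda_0$. This is harmless, since your scaling $p\mapsto Tp$ lets you force $k_0\geq1$, so you only need smallness at $\beta=1/k_0^{1/s}\leq1$. More importantly, the whole $\Delta_2$/block detour is unnecessary: the non-OC witness already has every property you actually use.

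The genuine gap is the non-regular case, which you flag but only sketch, and the sketch cannot be repaired. Attainment (Theorem \ref{thm:regular}$(i)$ via Remark \ref{rem:regular:Orlicz}) is guaranteed only when some $\phi_i$ is superlinear at infinity, whereas the theorem assumes nothing beyond the $\phi_i$ being Orlicz functions. If the infimum defining $\norm{p}{f}{}$ is \emph{not} attained, then every sequence of near-optimal scales must tend to $0$: the map $g_p(k)=kf\left(e_1+\sum_{i=2}^n\rho_{i-1}\left(p/k^{1/s}\right)e_i\right)$ is lower semicontinuous on $(0,\infty)$ and satisfies $g_p(k)\geq kf(e_1)$, so any near-optimal scales accumulating in $(0,\infty)$ would yield a minimizer. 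Your construction then needs perturbations $d_m$ with $\rho_i(d_m/\kappa_m^{1/s})\rightarrow0$ along scales $\kappa_m\rightarrow0$; but for each fixed $\lambda>0$ one eventually has $\lambda\leq\kappa_m^{-1/s}$, hence $\rho(\lambda d_m)\leq\rho(d_m/\kappa_m^{1/s})\rightarrow0$ for every $\lambda>0$, and the norm--modular equivalence underlying Proposition \ref{proposistion:BFS} (Theorems 1.6 and 1.10 in \cite{Mus} together with Corollary \ref{equival:f&A}) then forces $\norm{d_m}{f}{}\rightarrow0$, destroying the lower bound $\norm{d_m}{f}{}\geq c_0$ on which your contradiction rests. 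So no "small block-heights on sets of large measure" can be simultaneously modularly negligible at the exploding scales and bounded away from zero in norm. (For fairness: the paper's own proof is fully rigorous only under attainment as well, since its closing step "since $\epsilon_1>0$ is arbitrary" treats $y_m$, which depends on $k_0=k_0(\epsilon_1)$, as a fixed sequence; but the paper needs no case split and no regularity hypothesis to set up its argument.) To close your gap you would need to show that attainment can always be arranged by the choice of $p$, or give a genuinely different argument when every $\phi_i$ fails to be an $N$-function at infinity.
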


\begin{proof}
	Suppose for a contrary that the norm $\norm{\cdot}{f}{}$ is not order continuous. Then, there exist $\epsilon>0$ and a sequence $(z_m)\subset(X_\rho)^+$ such that $z_m\downarrow{0}$ a.e. and $\norm{z_m}{f}{}\geq\epsilon$ for all $m\in\mathbb{N}$. Without loss of generality we may assume that $I_{\phi_i}(z_m)<\infty$ for any $m\in\mathbb{N}$ and $i\in\{1,\dots,n-1\}$. Hence, by Lebesgue Dominated Convergence Theorem (see \cite{Royd}) it follows that $I_{\phi_i}(z_m)\rightarrow{0}$ for every $i\in\{1,\dots,n-1\}$. Next, using the diagonal method and passing to subsequence and relabeling if necessary we may assume 
	\begin{equation*}
	I_{\phi_i}(z_m)<\frac{1}{2^m}
	\end{equation*}
	for any ${m\in\mathbb{N}}$ and $i\in\{1,\dots,n-1\}$. Additionally, we may suppose without loss of generality that 
	\begin{equation*}
	\mu\left(I\setminus\bigcup_{m\in\mathbb{N}}\supp(z_m)\right)>0.
	\end{equation*}
	Now, let $\epsilon_1>0$ and $y\in(X_\rho)^+$ be such that $\norm{y}{f}{}=1$, $\supp(y)\subset I\setminus\bigcup_{m\in\mathbb{N}}\supp(z_m)$. Then, there is $k_0>0$ such that 
	\begin{equation}\label{equ:ULUM}
	k_0f\left(e_1+\sum_{k=2}^{n}I_{\phi_{k-1}}\left(\frac{y}{k_0}\right)e_k\right)\leq\norm{y}{f}{}+\epsilon_1.
	\end{equation}
	Define $x_m=k_0z_m$ and $y_m=y+x_m$ for any $m\in\mathbb{N}$. Notice that
	\begin{equation*}
	I_{\phi_i}\left(\frac{x_m}{k_0}\right)=I_{\phi_i}(z_m)<\frac{1}{2^m}
	\end{equation*}
	for all $m\in\mathbb{N}$ and $i\in\{1,\dots,n-1\}$. So, $(x_m)\subset(X_\rho)^+$ and also
	\begin{equation}\label{equ2:ULUM}
	\norm{y_m-y}{f}{}=\norm{x_m}{f}{}=\frac{1}{k_0}\norm{z_m}{f}{}\geq\epsilon{k_0}
	\end{equation}
	for all $m\in\mathbb{N}$. In consequence, we have
	\begin{align*}
	\norm{y_m}{f}{}&\leq 	k_0f\left(e_1+\sum_{k=2}^{n}I_{\phi_{k-1}}\left(\frac{y_m}{k_0}\right)e_k\right)\\
	&=k_0f\left(e_1+\sum_{k=2}^{n}I_{\phi_{k-1}}\left(\frac{y}{k_0}\right)e_k+I_{\phi_{k-1}}\left(\frac{x_m}{k_0}\right)e_k\right)\\
	&\leq{}k_0f\left(e_1+\sum_{k=2}^{n}I_{\phi_{k-1}}\left(\frac{y}{k_0}\right)e_k+\frac{1}{2^m}e_k\right)
	\end{align*}
	for every $m\in\mathbb{N}$. Therefore, by assumption that $f$ is a norm on $\mathbb{R}^n$ we conclude
	\begin{equation*}
	\limsup_{m\rightarrow\infty}\norm{y_m}{f}{}\leq k_0f\left(e_1+\sum_{k=2}^{n}I_{\phi_{k-1}}\left(\frac{y}{k_0}\right)e_k\right).
	\end{equation*}
	Hence, by \eqref{equ:ULUM} we obtain
	\begin{equation*}
	\limsup_{m\rightarrow\infty}\norm{y_m}{f}{}\leq\norm{y}{f}{}+\epsilon_1=1+\epsilon_1.
	\end{equation*}
	Thus, since $\epsilon_1>0$ is arbitrary we get
	\begin{equation*}
	\limsup_{m\rightarrow\infty}\norm{y_m}{f}{}\leq 1,
	\end{equation*}
	whence, since $y_m\geq{y}$ for any $m\in\mathbb{N}$, by assumption that $f$ satisfies \eqref{crucial} it is easy to see $\norm{y_m}{f}{}\geq\norm{y}{f}{}=1$, and consequently $\norm{y_m}{f}{}\rightarrow 1.$ Finally, in view of assumption that $X_\rho$ is upper locally uniformly monotone and by \eqref{equ2:ULUM} we get a contradiction.
\end{proof}

\begin{theorem}
	Let $f$ be a norm on $\mathbb{R}^n$ that is uniformly monotone. Let $\phi_i$ be an Orlicz function and $\rho_i(x)=I_{\phi_i}(|x|)$ for any $i\in\{1,\dots,n-1\}$ and $x\in{X_\rho}$, where $\rho=\max_{1\leq i\leq{n-1}}\{\rho_i\}$. Consider a space $X_\rho$ equipped with a norm $\norm{\cdot}{f}{}$. The following conditions are equivalent.
	\begin{itemize}
		\item[$(i)$] $X_\rho$ is order continuous.
     	\item[$(ii)$] $\phi=\max_{1\leq{}i\leq{n-1}}\{\phi_i\}$ satisfies $\Delta_2$ condition.
		\item[$(iii)$] $X_\rho$ is uniformly monotone.
		\item[$(iv)$]  $X_\rho$ is upper locally uniformly monotone.
		\item[$(v)$]  $X_\rho$ is decreasing uniformly monotone.
		\item[$(vi)$]  $X_\rho$ is lower locally uniformly monotone.
		\item[$(vii)$]  $X_\rho$ is increasing uniformly monotone.
	\end{itemize}
\end{theorem}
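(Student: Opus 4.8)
The plan is to close a single logical loop through all seven conditions, namely $(i)\Leftrightarrow(ii)$, then $(ii)\Rightarrow(iii)$, then $(iii)\Rightarrow(iv),(v),(vi),(vii)$, and finally each of $(iv),(v),(vi),(vii)\Rightarrow(i)$; since $(i)\Leftrightarrow(ii)$, this forces the equivalence of all seven. The equivalence $(i)\Leftrightarrow(ii)$ is precisely Proposition \ref{prop:OC:Orlicz}. For $(ii)\Rightarrow(iii)$ I would invoke Theorem \ref{thm:UM}: here each $\rho_i=I_{\phi_i}$ is convex, continuous, monotone and superadditive (the last because the convex function $\phi_i$ with $\phi_i(0)=0$ is superadditive on $\mathbb{R}^+$, so $I_{\phi_i}(x+y)\ge I_{\phi_i}(x)+I_{\phi_i}(y)$ for $x,y\in(X_\rho)^+$), so it only remains to check that $\rho=\max_{1\le i\le n-1}\{\rho_i\}$ satisfies \eqref{rho-converg}. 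This follows from $\Delta_2$ of $\phi=\max_i\phi_i$, since $\rho(2x_m)\le\int_I\phi(2|x_m|)\,d\mu\le K\int_I\phi(|x_m|)\,d\mu\le K(n-1)\rho(x_m)$, whence $\rho(x_m)\to 0$ forces $\rho(2x_m)\to 0$.

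The implications $(iii)\Rightarrow(iv),(v),(vi),(vii)$ are the standard facts that uniform monotonicity is the strongest of these notions; I would record them by citing \cite{Hu-Ku}, noting that normalizing by $\norm{y_m}{f}{}$ reduces $(DUM)$ and $(IUM)$ to the corresponding local uniform statements, which in turn follow from positivity of the modulus of monotonicity of a uniformly monotone space. The implication $(iv)\Rightarrow(i)$ is exactly Theorem \ref{thm:ULUM=>OC} (applicable since a uniformly monotone $f$ satisfies \eqref{crucial}). Thus the whole problem reduces to proving $(v)\Rightarrow(i)$, $(vi)\Rightarrow(i)$ and $(vii)\Rightarrow(i)$.

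For these three I would first note that $X_\rho$ has the Fatou property by Theorem \ref{thm:Fatou} (a uniformly monotone $f$ is in particular monotone, and for $\rho_i=I_{\phi_i}$ continuity holds while the required inequality $\liminf_m\rho_i(\lambda x_m)\ge\rho_i(\lambda x)$ is Fatou's lemma). I would then use the standard reduction that if $X_\rho$ is not order continuous there is a sequence $z_m\downarrow 0$ a.e. with $\norm{z_m}{f}{}\downarrow\gamma>0$ (take $z_m=\sup_{k\ge m}u_k$ for a witnessing sequence $u_k\le x$, $u_k\to 0$ a.e., $\norm{u_k}{f}{}\not\to 0$), and put $w_m=z_1-z_m\uparrow z_1$, so that by the Fatou property $\norm{w_m}{f}{}\uparrow\norm{z_1}{f}{}$. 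For $(vii)\Rightarrow(i)$, set $y_m\equiv z_1$ and $x_m=w_m$; then $x_m\le y_m\le y_{m+1}$ and $\lim_m\norm{x_m}{f}{}=\norm{z_1}{f}{}=\lim_m\norm{y_m}{f}{}$, while $\norm{y_m-x_m}{f}{}=\norm{z_m}{f}{}\to\gamma>0$, contradicting $(IUM)$. For $(vi)\Rightarrow(i)$, the same $w_m\le z_1$ with $\norm{w_m}{f}{}\to\norm{z_1}{f}{}$ forces, by $(LLUM)$, $\norm{z_1-w_m}{f}{}=\norm{z_m}{f}{}\to 0$, again a contradiction.

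The remaining and most delicate implication is $(v)\Rightarrow(i)$, since there the decreasing sequence in the definition of $(DUM)$ must be the lower one and its norm limit must match that of the dominating sequence exactly. The idea is to extract increments bounded below in norm: by the Fatou property, for each fixed $k$ one has $z_k-z_m\uparrow z_k$ as $m\to\infty$, hence $\norm{z_k-z_m}{f}{}\uparrow\norm{z_k}{f}{}\ge\gamma$, so one may choose indices $k_1<k_2<\cdots$ with $\norm{z_{k_i}-z_{k_{i+1}}}{f}{}\ge\gamma/2$. Setting $x_i=z_{k_{i+1}}$ and $y_i=z_{k_i}$ gives $x_{i+1}\le x_i\le y_i$ and $\lim_i\norm{x_i}{f}{}=\lim_i\norm{y_i}{f}{}=\gamma$, while $\norm{y_i-x_i}{f}{}\ge\gamma/2$, contradicting $(DUM)$. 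I expect this extraction step, together with the use of the Fatou property to guarantee $\norm{z_k-z_m}{f}{}\uparrow\norm{z_k}{f}{}$, to be the main obstacle; once it is in place, $(v),(vi),(vii)\Rightarrow(i)$, combined with $(i)\Leftrightarrow(ii)\Rightarrow(iii)\Rightarrow(iv),(v),(vi),(vii)$, closes the loop and yields the equivalence of all seven conditions.
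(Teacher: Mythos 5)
Your overall architecture matches the paper on the core loop: $(i)\Leftrightarrow(ii)$ is Proposition \ref{prop:OC:Orlicz}, $(ii)\Rightarrow(iii)$ is Theorem \ref{thm:UM} (and your direct verification of \eqref{rho-converg} via $\rho(2x_m)\le K(n-1)\rho(x_m)$ is a sound replacement for the paper's appeal to Theorem 8.14 of \cite{Mus}), and $(iv)\Rightarrow(i)$ is Theorem \ref{thm:ULUM=>OC}. Where you genuinely diverge is in handling $(v)$, $(vi)$, $(vii)$: the paper disposes of these by citing Proposition 2.1 of \cite{DoHuLoMaSi} and Theorems 1.1--1.2 of \cite{CheHeHudz}, whereas you prove $(v),(vi),(vii)\Rightarrow(i)$ by hand, via the sequence $z_m=\sup_{k\ge m}u_k$, the increments $z_1-z_m$ and $z_{k_i}-z_{k_{i+1}}$, and the definitions of $(DUM)$, $(LLUM)$, $(IUM)$. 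Those constructions are correct as stated and would make this part of the theorem self-contained rather than outsourced to the literature.

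However, there is a genuine gap in the one step all three of these implications lean on: you obtain the Fatou property of $X_\rho$ from Theorem \ref{thm:Fatou} by asserting that ``for $\rho_i=I_{\phi_i}$ continuity holds.'' That assertion is false in general. Theorem \ref{thm:Fatou} requires each $\rho_i$ to be a \emph{continuous} semimodular in the sense of the Preliminaries, i.e. $\lambda\mapsto I_{\phi_i}(\lambda x)$ must in particular be right-continuous at every $\lambda_0>0$ for every $x\in X_\rho$; this fails whenever there exists $x$ with $I_{\phi_i}(x)<\infty$ but $I_{\phi_i}(\lambda x)=\infty$ for all $\lambda>1$ (take e.g. $\phi_i(u)=e^{u^2}-1$ and $x=\sum_k\sqrt{k}\,\chi_{A_k}$ with the $A_k$ disjoint, $\mu(A_k)=e^{-k}k^{-2}$), and such $x$ exist precisely when the Orlicz functions fail $\Delta_2$ --- which is exactly the regime your contradiction arguments for $(v),(vi),(vii)\Rightarrow(i)$ must operate in, since under $(ii)$ order continuity is already known. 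So, as written, the cornerstone of these three implications rests on an unverified (indeed generally false) hypothesis. The gap is repairable: integral modulars are always left-continuous (monotone convergence) and are continuous on $[0,a]$ whenever $\rho(ax)<\infty$ (dominated convergence, cf. Remark \ref{rem:regular:Orlicz}), and an inspection of the proof of Theorem \ref{thm:Fatou} shows that this weaker property suffices, because at the point where two-sided continuity is invoked the relevant modular values $\rho_{i}(x_m/k_j^{1/s})$ are already known to be finite. But you must either prove such a weakened version of Theorem \ref{thm:Fatou} or establish the Fatou property of $\|\cdot\|_f$ directly; you cannot invoke the theorem as stated.
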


\begin{proof}
	Immediately, by Proposition 2.1 in \cite{DoHuLoMaSi} and by Theorem 1.2 in \cite{CheHeHudz} it follows that $(vi)\Leftrightarrow(vii)$. Next, by Theorem 1.1 in \cite{CheHeHudz} and by Theorem \ref{thm:ULUM=>OC} we conclude that $(iv)\Leftrightarrow(v)$. Clearly, we have $(iii)\Rightarrow(iv)$ and $(iii)\Rightarrow(vi)$ (see \cite{CheHeHudz}). Moreover, by Theorem \ref{thm:ULUM=>OC} we infer that $(iv)\Rightarrow(i)$. Similarly, by Proposition 2.1 in \cite{DoHuLoMaSi} we get $(vi)\Rightarrow(i)$. Now, since $f$ is a norm on $\mathbb{R}^n$ that is uniformly monotone we easily observe that $f$ satisfies \eqref{crucial}. Hence, by Proposition \ref{prop:OC:Orlicz} we obtain $(i)\Rightarrow(ii)$. Next, since $\phi$ satisfies $\Delta_2$ condition, by Theorem 8.14 in \cite{Mus} and by Remark \ref{rem:rho-converg} we conclude that $\rho$ satisfies \eqref{rho-converg}. Finally, by Theorem \ref{thm:UM} we infer $(ii)\Rightarrow(iii)$ and finish the proof.
\end{proof}

\subsection*{Acknowledgments}
\begin{flushleft}
	$^1$ This research is supported by the grant 2017/01/X/ST1/01036 from National Science Centre, Poland.	
\end{flushleft}

$\begin{array}{lr}
\textnormal{\small Maciej CIESIELSKI} & \textnormal{\small Grzegorz Lewicki}\\
\textnormal{\small Institute of Mathematics} & \textnormal{\small Department of Mathematics and Computer Science}\\
\textnormal{\small Pozna\'{n} University of Technology} & \textnormal{\small Jagiellonian University}\\
\textnormal{\small Piotrowo 3A, 60-965 Pozna\'{n}, Poland} & \textnormal{\small \L ojasiewicza 6, 30-348 Krak\'ow, Poland}\\
\textnormal{\small email: maciej.ciesielski@put.poznan.pl;} & \textnormal{\small email: grzegorz.lewicki@im.uj.edu.pl}
\end{array}$

\end{document}